\DeclareFontFamily{OMX}{lmex}{}
\DeclareFontShape{OMX}{lmex}{m}{n}{<->lmex10}{}
\theoremstyle{plain}
\newtheorem{theo}{Théorème}[section]
\newtheorem{prop}[theo]{Proposition}
\newtheorem{coro}[theo]{Corollaire}
\newtheorem{lemm}[theo]{Lemme}
\newtheorem*{slem}{Sous-lemme}
\theoremstyle{definition}
\newtheorem{defi}[theo]{Définition}
\theoremstyle{remark}
\newtheorem{rema}[theo]{Remarque}
\newtheorem{exem}[theo]{Exemple}
\DeclareMathOperator{\im}{im}
\DeclareMathOperator{\card}{card}
\DeclareMathOperator{\diag}{diag}
\DeclareMathOperator{\Hom}{Hom}
\DeclareMathOperator{\Ext}{Ext}
\DeclareMathOperator{\Lie}{Lie}
\DeclareMathOperator{\detfr}{d\acute{e}t}
\newcommand{\llbrack}{[\![}
\newcommand{\rrbrack}{]\!]}
\newcommand{\dfn}{\overset{\text{déf}}{=}}
\newcommand{\iso}{\overset{\sim}{\longrightarrow}}
\newcommand{\un}{\underline{1}}
\newcommand{\unc}{\widehat{\un}}
\newcommand{\der}{\mathrm{der}}
\newcommand{\Nbb}{\mathbb{N}}
\newcommand{\Z}{\mathbb{Z}}
\newcommand{\Q}{\mathbb{Q}}
\newcommand{\Fp}{\mathbb{F}_p}
\newcommand{\Zp}{\Z_p}
\newcommand{\Qp}{\Q_p}
\newcommand{\Oe}{\mathcal{O}_{\!E}}
\newcommand{\pe}{\varpi_{\!E}}
\newcommand{\ke}{k_E}
\newcommand{\A}[1]{\Oe/\pe^{#1}\Oe}
\newcommand{\val}{\operatorname{val}_p}
\newcommand{\Nrm}{\operatorname{N}_{F/\Qp}}
\newcommand{\GL}{\mathrm{GL}}
\newcommand{\SL}{\mathrm{SL}}
\newcommand{\PGL}{\mathrm{PGL}}
\newcommand{\St}{\mathrm{St}}
\newcommand{\Sp}{\mathrm{Sp}}
\newcommand{\Stc}{\widehat{\mathrm{St}}}
\newcommand{\Spc}{\widehat{\mathrm{Sp}}}
\newcommand{\Spcz}{\Spc{}^0}
\newcommand{\epsa}{\varepsilon^{-1} \circ \alpha}
\newcommand{\oma}{\omega^{-1} \circ \alpha}
\newcommand{\omb}{\omega^{-1} \circ \beta}
\newcommand{\Ga}{G_\alpha}
\newcommand{\Ba}{B_\alpha}
\newcommand{\Pa}{P_\alpha}
\newcommand{\Za}{Z_\alpha}
\newcommand{\NPz}{N_{P,0}}
\newcommand{\Na}{N_\alpha}
\newcommand{\Naz}{N_{\alpha,0}}
\newcommand{\E}{\mathcal{E}}
\newcommand{\Ea}{\E_\alpha}
\newcommand{\Hc}[1][\bullet]{\mathrm{H}^{#1}}
\newcommand{\Rc}[1][\bullet]{\mathrm{R}^{#1}}
\newcommand{\fin}[1]{#1\!\operatorname{-fin}}
\newcommand{\Ord}[1][B]{\operatorname{Ord}_{#1(F)}}
\newcommand{\HOrd}[1][\bullet]{\Hc[#1]\!\Ord}
\newcommand{\HOrdP}[1][\bullet]{\Hc[#1]\!\Ord[P]}
\newcommand{\OrdQp}[1][B]{\operatorname{Ord}_{#1(\Qp)}}
\newcommand{\HOrdQp}[1][B]{\operatorname{H^1 Ord}_{#1(\Qp)}}
\newcommand{\ROrdQp}[1][\bullet]{\Rc[#1]\!\OrdQp}
\newcommand{\Ind}[1][B^-]{\operatorname{Ind}^{G(F)}_{#1(F)}}
\newcommand{\IndL}[1][B_L^-]{\operatorname{Ind}^{L(F)}_{#1(F)}}
\newcommand{\IndQp}[1][B^-]{\operatorname{Ind}^{G(\Qp)}_{#1(\Qp)}}
\newcommand{\IndQpL}[1][B_L^-]{\operatorname{Ind}^{L(\Qp)}_{#1(\Qp)}}
\newcommand{\Inda}[1][\Ba^-]{\operatorname{Ind}^{\Ga(\Qp)}_{#1(\Qp)}}
\newcommand{\Indaprime}[1][(\Ba^- \cap \Ga')]{\operatorname{Ind}^{\Ga'(\Qp)}_{#1(\Qp)}}
\newcommand{\IndQpH}{\operatorname{Ind}^{\Ga'(\Qp)}_{(\Ba^- \cap \Ga')(\Qp)}}
\newcommand{\IndQpGLd}{\operatorname{Ind}^{\GL_2(\Qp)}_{B_2^-(\Qp)}}
\newcommand{\IndQpSLd}{\operatorname{Ind}^{\SL_2(\Qp)}_{(B_2^- \cap \SL_2)(\Qp)}}
\newcommand{\IndQpPGLd}{\operatorname{Ind}^{\PGL_2(\Qp)}_{(B_2^-/Z_2)(\Qp)}}
\newcommand{\Clis}{\mathcal{C}^\infty}
\newcommand{\w}{\widetilde{w}}
\newcommand{\W}{\widetilde{W}}
\title{Compléments sur les extensions entre séries principales $p$-adiques et modulo $p$ de $G(F)$}
\author{Julien Hauseux}
\date{}
\begin{document}

\maketitle

\begin{abstract}
Nous complétons les résultats de \cite{JH}.
Soit $G$ un groupe réductif connexe déployé sur une extension finie $F$ de $\mathbb{Q}_p$. Lorsque $F=\mathbb{Q}_p$, nous déterminons les extensions entre séries principales $p$-adiques et modulo $p$ de $G(\mathbb{Q}_p)$ sans supposer le centre de $G$ connexe ou le groupe dérivé de $G$ simplement connexe. Cela fait apparaître un phénomène nouveau : il peut exister plusieurs extensions non scindées non isomorphes entre deux séries principales distinctes.
Nous complétons aussi les calculs d'auto-extensions d'une série principale dans les cas non génériques lorsque le centre de $G$ est connexe.
Nous déterminons enfin les extensions d'une série principale de $G(F)$ par une représentation \og ordinaire \fg{} de $G(F)$ (c'est-à-dire obtenue par induction parabolique à partir d'une représentation spéciale tordue par un caractère). Pour cela, nous calculons le $\delta$-foncteur $\mathrm{H^\bullet Ord}_{B(F)}$ des parties ordinaires dérivées d'Emerton relatif à un sous-groupe de Borel sur une représentation ordinaire de $G(F)$.
\end{abstract}

\tableofcontents

\section{Introduction}

\subsection*{Contexte}

Nous rappelons les résultats de \cite{JH}. Soient $F$ une extension finie de $\Qp$ et $G$ un groupe réductif connexe déployé sur $F$. On fixe une extension finie $E$ de $\Qp$.

Lorsque $F=\Qp$, nous avons calculé les extensions entre séries principales continues unitaires de $G(\Qp)$ sur $E$ en faisant les hypothèses suivantes sur $G$ : son centre est connexe et son groupe dérivé est simplement connexe. Dans cet article, nous traitons le cas général et nous complétons les calculs d'auto-extensions d'une série principale dans les cas non génériques lorsque le centre de $G$ est connexe.

Nous avons également calculé les extensions d'une série principale continue unitaire de $G(F)$ sur $E$ par l'induite parabolique d'un caractères continu unitaire. Dans cet article, nous généralisons ces calculs pour les représentations continues unitaires \og ordinaires \fg{} de $G(F)$ sur $E$ (c'est-à-dire obtenues par induction parabolique à partir d'une représentation continue unitaire spéciale tordue par un caractère continu unitaire).

\subsection*{Principaux résultats}

Soient $B \subset G$ un sous-groupe de Borel et $T \subset B$ un tore maximal déployé. On note $B^- \subset G$ le sous-groupe de Borel opposé à $B$ par rapport à $T$, $\Delta$ les racines simples de $(G,B,T)$ et pour tout $\alpha \in \Delta$, on note $s_\alpha$ la réflexion simple correspondante. On note $\varepsilon : F^\times \to \Zp^\times$ le caractère cyclotomique $p$-adique et $\Oe$ l'anneau des entiers de $E$.
On calcule les $\Ext^1$ dans les catégories abéliennes de représentations continues unitaires admissibles sur $E$ en utilisant les extensions de Yoneda.

\medskip

Nous calculons tout d'abord les extensions entre séries principales continues unitaires de $G(F)$ sur $E$.
Lorsque $F \neq \Qp$, ces extensions proviennent toujours d'une extension entre caractères de $T(F)$ (voir \cite[Théorème 1.2]{JH}). On suppose donc $F=\Qp$ et on généralise \cite[Théorème 1.1]{JH}.

En comparaison si le groupe dérivé de $G$ n'est pas simplement connexe, alors $G$ n'admet pas nécessairement un \og twisting element \fg{} $\theta$ (par exemple $G=\PGL_2$) et si le centre de $G$ n'est pas connexe, alors on peut avoir
\begin{equation*}
\card \left\{ \alpha \in \Delta \mid \chi' = s_\alpha(\chi) \cdot (\epsa) \right\}>1
\end{equation*}
avec $\chi,\chi' : T(\Qp) \to \Oe^\times \subset E^\times$ des caractères continus unitaires distincts (voir l'exemple \ref{exem:card}).

Nous démontrons le résultat suivant (Théorème \ref{theo:ext}), ainsi que son analogue modulo $p$ (c'est-à-dire dans les catégories de représentations lisses admissibles sur le corps résiduel $\ke$ de $E$).

\begin{theo} \label{theo:1}
Soit $\chi : T(\Qp) \to \Oe^\times \subset E^\times$ un caractère continu unitaire.
\begin{enumerate}[(i)]
\item Si $\chi' : T(\Qp) \to \Oe^\times \subset E^\times$ est un caractère continu unitaire distinct de $\chi$, alors
\begin{multline*}
\dim_E \Ext^1_{G(\Qp)} \left( \IndQp \chi',\IndQp \chi \right) \\
= \card \left\{ \alpha \in \Delta \mid \chi' = s_\alpha(\chi) \cdot (\epsa) \right\}.
\end{multline*}
\item Si $s_\alpha(\chi) \cdot (\epsa) \neq \chi$ pour tout $\alpha \in \Delta$, alors le foncteur $\IndQp$ induit un isomorphisme $E$-linéaire
\begin{equation*}
\Ext_{T(\Qp)}^1 \left( \chi, \chi \right) \iso \Ext_{G(\Qp)}^1 \left( \IndQp \chi,\IndQp \chi \right).
\end{equation*}
\end{enumerate}
\end{theo}

Sans l'hypothèse de généricité du point (ii), le foncteur $\IndQp$ induit une injection $E$-linéaire et on donne un minorant et un majorant de la dimension de son conoyau (voir le point (ii) de la remarque \ref{rema:ext}).

Lorsque le centre de $G$ est connexe, nous complétons les calculs d'auto-extensions d'une série principale modulo $p$ dans les cas non génériques (Théorème \ref{theo:autoextmodp}) et nous en déduisons le résultat analogue $p$-adique lorsque $p \neq 2$ (Corollaire \ref{coro:autoext}).

\begin{theo} \label{theo:2}
On suppose le centre de $G$ connexe. Soit $\chi : T(\Qp) \to \ke^\times$ un caractère lisse.
\begin{enumerate}[(i)]
\item Si $p \neq 2$, alors le foncteur $\IndQp$ induit un isomorphisme $\ke$-linéaire
\begin{equation*}
\Ext_{T(\Qp)}^1 \left( \chi,\chi \right) \iso \Ext_{G(\Qp)}^1 \left( \IndQp \chi,\IndQp \chi \right).
\end{equation*}
\item Si $p=2$, alors le foncteur $\IndQp$ induit une injection $\ke$-linéaire
\begin{equation*}
\Ext_{T(\Qp)}^1 \left( \chi,\chi \right) \hookrightarrow \Ext_{G(\Qp)}^1 \left( \IndQp \chi,\IndQp \chi \right)
\end{equation*}
dont le conoyau est de dimension $\card \{\alpha \in \Delta \mid s_\alpha(\chi)=\chi\}$.
\end{enumerate}
\end{theo}

\begin{coro}
On suppose le centre de $G$ connexe et $p \neq 2$.
Pour tout caractère continu unitaire $\chi : T(\Qp) \to \Oe^\times \subset E^\times$, le foncteur $\IndQp$ induit un isomorphisme $E$-linéaire
\begin{equation*}
\Ext_{T(\Qp)}^1 \left( \chi,\chi \right) \iso \Ext_{G(\Qp)}^1 \left( \IndQp \chi,\IndQp \chi \right).
\end{equation*}
\end{coro}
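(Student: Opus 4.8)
The plan is to deduce this from Theorem~\ref{theo:2} (i.e.\ Theorem~\ref{theo:autoextmodp} and its $p$-adic counterpart) together with Theorem~\ref{theo:1}(ii). The key point is that when $p \neq 2$, the hypotheses of Theorem~\ref{theo:2}(i) automatically hold for every continuous unitary character: indeed we will reduce the statement to the situation where $s_\alpha(\chi)\cdot(\epsa) \neq \chi$ for all $\alpha \in \Delta$, which is exactly the generic hypothesis of Theorem~\ref{theo:1}(ii).

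First I would observe that it suffices to check the hypothesis $s_\alpha(\chi)\cdot(\epsa) \neq \chi$ for all $\alpha \in \Delta$. Suppose the contrary: there exists $\alpha \in \Delta$ with $s_\alpha(\chi) \cdot (\epsa) = \chi$. Restricting this identity of characters of $T(\Qp)$ to the coroot $\alpha^\vee(\Qp^\times)$, the action of $s_\alpha$ sends $\alpha^\vee(x)$ to $\alpha^\vee(x)^{-1}$, so one obtains $\chi(\alpha^\vee(x))^2 = \varepsilon(x)^{-1}$ for all $x \in \Qp^\times$. Evaluating at $x = -1 \in \Zp^\times$ (or more simply using that the left-hand side is a square in $\Oe^\times$ while $\varepsilon(-1) = 1$ and analysing the torsion), one derives a contradiction with $p \neq 2$: the square of a continuous character of $\Qp^\times$ valued in $\Oe^\times$ cannot equal $\varepsilon^{-1}$ because $\varepsilon^{-1}$ restricted to $1 + p\Zp$ is (after the logarithm) multiplication by a $p$-adic unit whose class is not twice anything when $p$ is odd — concretely, $\varepsilon^{-1}$ is not a square in the group of continuous characters $\Qp^\times \to \Oe^\times$. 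Hence the generic hypothesis of Theorem~\ref{theo:1}(ii) holds automatically.

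Then the corollary follows immediately: by Theorem~\ref{theo:1}(ii) the functor $\IndQp$ induces an $E$-linear isomorphism $\Ext^1_{T(\Qp)}(\chi,\chi) \iso \Ext^1_{G(\Qp)}(\IndQp \chi, \IndQp \chi)$, with no assumption on the centre of $G$ needed beyond what is already in force. (If one prefers to route through Theorem~\ref{theo:2} rather than Theorem~\ref{theo:1}(ii): under the connectedness hypothesis on the centre, part~(i) of Theorem~\ref{theo:2} gives the modulo~$p$ isomorphism for the reduction $\bar\chi$, and one lifts this to characteristic zero via the usual dévissage relating $\Ext^1$ over $E$ to $\Ext^1$ over $\ke$ through $\Oe$, using admissibility and the exactness properties of parabolic induction.)

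The main obstacle is the verification that $\varepsilon^{-1}$ admits no square root among continuous unitary characters of $\Qp^\times$ when $p \neq 2$; once this elementary fact about $p$-adic characters is in place, everything else is a direct appeal to the already-established theorems. It is precisely here that $p = 2$ is excluded, consistently with Theorem~\ref{theo:2}(ii).
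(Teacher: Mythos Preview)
Your primary route contains a genuine error. When you restrict the identity $s_\alpha(\chi)\cdot(\epsa)=\chi$ to $\alpha^\vee(\Qp^\times)$, you must use $\alpha(\alpha^\vee(x))=x^{\langle\alpha,\alpha^\vee\rangle}=x^2$, so $(\epsa)(\alpha^\vee(x))=\varepsilon^{-1}(x^2)=\varepsilon(x)^{-2}$, not $\varepsilon(x)^{-1}$. The correct relation is therefore $(\chi\circ\alpha^\vee)^2=\varepsilon^{-2}$, which says only that $(\chi\circ\alpha^\vee)\cdot\varepsilon$ is a quadratic character of $\Qp^\times$. This certainly has solutions for odd $p$: take $\chi\circ\alpha^\vee=\varepsilon^{-1}$. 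Concretely, for $G=\GL_2$ (whose centre is connected) and $\chi=\chi_1\otimes\chi_2$ with $\chi_1\chi_2^{-1}=\varepsilon^{-1}$, one has $s_\alpha(\chi)\cdot(\epsa)=\chi$. So the generic hypothesis of Theorem~\ref{theo:ext}(ii) is \emph{not} automatic when $p\neq 2$, and that theorem alone does not give the corollary. The whole point of Theorem~\ref{theo:autoextmodp} is precisely to treat these non-generic characters; your claimed reduction would render that theorem superfluous. (What \emph{is} true, via Lemma~\ref{lemm:irreg} with the centre connected, is that $s_\alpha(\chi)\cdot(\epsa)=\chi$ forces $\chi\circ\alpha^\vee=\varepsilon^{-1}$, so the \emph{lower} bound in Remark~\ref{rema:ext}(ii) vanishes; but the upper bound need not.)

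Your parenthetical alternative route is the correct one and is exactly what the paper does: one first observes that $\IndQp$ induces an injection on $\Ext^1$ (since it is exact with a left quasi-inverse), then bounds $\dim_E\Ext^1_{G(\Qp)}(\IndQp\chi,\IndQp\chi)$ above by the corresponding mod~$p$ dimension via \cite[Proposition~B.2]{JH}, applies Theorem~\ref{theo:autoextmodp}(i) to identify the latter with $\dim_{\ke}\Ext^1_{T(\Qp)}(\bar\chi,\bar\chi)$, and finally uses that for $p\neq 2$ this equals $\dim_E\Ext^1_{T(\Qp)}(\chi,\chi)$. You should make this the main argument and discard the attempted reduction to the generic case.
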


Nous étendons enfin nos calculs aux extensions d'une série principale de $G(F)$ par une représentation ordinaire de $G(F)$ (Propositions \ref{prop:extord} et \ref{prop:extordF}).

\subsection*{Méthodes utilisées}

Expliquons la preuve du théorème \ref{theo:1}. On suppose $F=\Qp$. Soient $\chi,\chi' : T(\Qp) \to \Oe^\times \subset E^\times$ des caractères continus unitaires distincts et $\Delta' \subset \Delta$ le sous-ensemble des racines simples $\alpha \in \Delta$ telles que $\chi' = s_\alpha(\chi) \cdot (\epsa)$.
Pour tout $\alpha \in \Delta$, on note $\Ga \subset G$ le centralisateur de $(\ker \alpha)^\circ \subset T$. Dans la sous-section \ref{ssec:ga}, on montre que pour tout $\alpha \in \Delta'$ il existe une extension non scindée
\begin{equation*}
0 \to \Inda[(B^- \cap \Ga)] \chi \to \Ea \to \Inda[(B^- \cap \Ga)] \chi' \to 0.
\end{equation*}
Le résultat principal de la sous-section \ref{ssec:compat} permet alors de montrer que les classes des extensions
\begin{equation*}
\left( \IndQp[(B^-\Ga)] \Ea \right)_{\alpha \in \Delta'}
\end{equation*}
sont linéairement indépendantes dans le $E$-espace vectoriel du point (i), d'où une première inégalité. La seconde inégalité et le point (ii) se démontrent par réduction modulo $p^k$ et dévissage en utilisant les calculs de parties ordinaires dérivées de \cite[§ 4.2]{JH} et la suite exacte dérivée de la relation d'adjonction entre les foncteurs $\OrdQp$ et $\IndQp$ (voir la sous-section \ref{ssec:preli}).

De même, le théorème \ref{theo:2} se démontre en se ramenant au cas de $\GL_2(\Qp)$, mais en utilisant la suite exacte complète de cinq termes.

\medskip

Expliquons les preuves des proposition \ref{prop:extord} et \ref{prop:extordF}. On procède par réduction modulo $p^k$ et dévissage comme dans \cite{JH}. En caractéristique positive, on utilise le $\delta$-foncteur $\HOrd$ des parties ordinaires dérivées d'Emerton. On fixe une uniformisante $\pe$ de $\Oe$ et un entier $k \geq 1$.
On définit tout d'abord la filtration de Bruhat d'une représentation ordinaire de $G(F)$ sur $\A{k}$, puis on calcule son gradué qui est un facteur direct du gradué de la filtration de Bruhat d'une série principale de $G(F)$ sur $\A{k}$. On utilise ensuite les calculs de parties ordinaires dérivées de \cite[§ 4.3]{JH} pour en déduire l'expression du $\delta$-foncteur $\HOrd$ sur une telle représentation (Théorème \ref{theo:hord}). On explicite enfin ce calcul en degrés $0$ et $1$ (Corollaires \ref{coro:ord} et \ref{coro:h1ord}) et on conclut en utilisant la suite exacte dérivée de la relation d'adjonction entre les foncteurs $\Ord$ et $\Ind$ (voir la sous-section \ref{ssec:preli}).

\subsection*{Notations et conventions}

Soit $F$ une extension finie de $\Qp$. On note $\varepsilon : F^\times \to \Zp^\times$ le caractère cyclotomique $p$-adique (défini par $\varepsilon(x) = \Nrm(x) \lvert \Nrm(x) \rvert_p$ pour tout $x \in F^\times$) et $\omega : F^\times \to \Fp^\times$ sa réduction modulo $p$.

Soient $G$ un groupe réductif connexe déployé sur $F$, $B \subset G$ un sous-groupe de Borel et $T \subset B$ un tore maximal déployé. On note $B^- \subset G$ le sous-groupe de Borel opposé à $B$ par rapport à $T$ et $N$ le radical unipotent de $B$.
On note $W$ le groupe de Weyl de $(G,T)$, $\ell : W \to \Nbb$ la longueur relative à $B$ et $\Delta$ les racines simples de $(G,B,T)$. Pour tout $\alpha \in \Delta$, on note $s_\alpha \in W$ la réflexion simple correspondante et pour tout $w \in W$, on note $\dot{w} \in G(F)$ un représentant de $w$ dans le normalisateur de $T(F)$.

Si $P \subset G$ est un sous-groupe parabolique standard (c'est-à-dire contenant $B$) et $L \subset P$ est le sous-groupe de Levi standard (c'est-à-dire contenant $T$), on note $P^- \subset G$ le sous-groupe parabolique opposé à $P$ par rapport à $L$, $B_L \subset L$ (resp. $B_L^- \subset L$) le sous-groupe de Borel $B \cap L$ (resp. $B^- \cap L$) et $\Delta_L \subset \Delta$ les racines simples de $(L,B_L,T)$.

Soit $E$ une extension finie de $\Qp$. On note $\Oe$ l'anneau des entiers de $E$ et $\ke$ le corps résiduel de $\Oe$. On fixe une uniformisante $\pe$ de $\Oe$. On désigne par $A$ une $\Oe$-algèbre locale artinienne de corps résiduel $\ke$ et on note encore $\omega : F^\times \to A^\times$ l'image de $\varepsilon$ dans $A^\times$.

Pour les représentations d'un groupe de Lie $p$-adique, on utilise la terminologie de \cite[§ 2]{Em1} pour les représentations lisses à coefficients dans $A$ et on renvoie à \cite[§ 3.1]{BH} pour les représentations continues unitaires admissibles sur des $E$-espaces de Banach.

\subsection*{Remerciements}

Je remercie chaleureusement mon directeur de thèse Christophe Breuil d'avoir suivi ce travail avec attention, ainsi que pour ses remarques et ses conseils.
Je remercie également Florian Herzig et le rapporteur anonyme pour de nombreux commentaires qui ont permis d'améliorer cet article.

\numberwithin{theo}{subsection}

\section{Parties ordinaires dérivées}

Nous commençons par faire quelques rappels sur le $\delta$-foncteur $\HOrdP$. Puis, nous démontrons une compatibilité naturelle lorsque $F=\Qp$ entre les calculs de parties ordinaires dérivées de \cite{JH} en degré $1$ et l'induction à partir d'un sous-groupe parabolique correspondant à une racine simple. Enfin, nous généralisons les résultats de \cite[§ 2.3]{JH} et \cite[§ 4.3]{JH} aux représentations ordinaires.

\subsection{Préliminaires} \label{ssec:preli}

Soient $P \subset G$ un sous-groupe parabolique standard et $L \subset P$ le sous-groupe de Levi standard. On note $N_P$ le radical unipotent de $P$ et $Z_L$ le centre de $L$. On fixe un sous-groupe ouvert compact $\NPz$ de $N_P(F)$ et on définit des sous-monoïdes de $L(F)$ et $Z_L(F)$ en posant
\begin{gather*}
L^+ \dfn \left\{ l \in L(F) \mid l \NPz l^{-1} \subset \NPz \right\}, \\
Z_L^+ \dfn Z_L(F) \cap L^+.
\end{gather*}

On rappelle tout d'abord la construction du $\delta$-foncteur $\HOrdP$.
Soit $V$ une représentation lisse de $P(F)$ sur $A$. D'après \cite[§ 3]{Em2}, les $A$-modules $\Hc(\NPz,V)$ sont naturellement munis d'une \emph{action de Hecke} de $L^+$ et les $A$-modules
\begin{equation*}
\HOrdP V \dfn \Hom_{A[Z_L^+]} \left( A[Z_L(F)],\Hc(\NPz,V) \right)_{\fin{Z_L(F)}}
\end{equation*}
sont naturellement des représentations lisses de $L(F)$ sur $A$, appelées les \emph{parties ordinaires dérivées} de $V$. De plus si $V$ est une représentation lisse localement admissible de $G(F)$ sur $A$, alors $\HOrdP V$ est le \emph{localisé en $Z_L^+$ de $\Hc(\NPz,V)$} : on a un isomorphisme naturel $L(F)$-équivariant
\begin{equation*}
\HOrdP V \cong A[Z_L(F)] \otimes_{A[Z_L^+]} \Hc(\NPz,V)
\end{equation*}
(voir \cite[Lemme 3.2.1]{Em2} et \cite[Théorème 3.4.7]{Em2}).

On rappelle maintenant la construction de l'isomorphisme naturel qui fait du foncteur des parties ordinaires $\Ord[P] = \HOrdP[0]$ un quasi-inverse à gauche du foncteur d'induction parabolique $\Ind[P^-]$.

\begin{lemm} \label{lemm:inj}
Soit $U$ une représentation lisse localement admissible de $L(F)$ sur $A$. On a une injection naturelle $L^+$-équivariante
\begin{equation*}
U \hookrightarrow \left( \Ind[P^-] U \right)^{\NPz}
\end{equation*}
qui induit après localisation en $Z_L^+$ un isomorphisme $L(F)$-équivariant
\begin{equation*}
U \iso \Ord[P] \left( \Ind[P^-] U \right).
\end{equation*}
\end{lemm}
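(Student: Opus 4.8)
L'idée est de construire explicitement l'injection $U \hookrightarrow (\Ind[P^-] U)^{\NPz}$ à partir de l'évaluation des fonctions en l'identité, puis de montrer que cette flèche devient un isomorphisme après localisation en $Z_L^+$. Rappelons que $\Ind[P^-] U$ est l'espace des fonctions lisses $f : G(F) \to U$ vérifiant $f(p^- g) = p^- \cdot f(g)$ pour tout $p^- \in P^-(F)$, muni de l'action par translation à droite. On définit $\iota : U \to \Ind[P^-] U$ en envoyant $u \in U$ sur la fonction $f_u$ supportée sur la grosse cellule $P^-(F) N_P(F)$, donnée par $f_u(p^- n) = p^- \cdot u$ pour $p^- \in P^-(F)$ et $n \in N_P(F)$ (prolongée par $0$ ailleurs) ; il faut vérifier que $f_u$ est bien lisse, ce qui provient du fait que $U$ est lisse et que l'application $p^- n \mapsto p^-$ est localement constante en restriction à un ouvert compact donné. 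L'application $\iota$ est $L(F)$-équivariante pour l'action de Hecke sur la cible, et son image est manifestement fixée par $\NPz$, d'où la flèche annoncée. L'injectivité est claire puisque $f_u(1) = u$.

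La deuxième étape est de comprendre la structure de Hecke-module de $(\Ind[P^-] U)^{\NPz}$ et de montrer que $\iota$ induit un isomorphisme après localisation. Il s'agit ici d'identifier $(\Ind[P^-] U)^{\NPz}$ : une fonction $f$ fixée par $\NPz$ est déterminée par ses valeurs sur un système de représentants de $\NPz \backslash G(F) / P^-(F)$, et par lissité une telle fonction est à support dans un nombre fini de doubles classes. La grosse cellule contribue exactement $U$ (via $\iota$), tandis que les cellules plus petites contribuent des termes qui sont annulés par la localisation en $Z_L^+$ : pour un élément $z \in Z_L^+$ suffisamment contractant, l'action de Hecke $\pi_z$ « pousse » le support des fonctions vers la grosse cellule, de sorte que tout élément hors de l'image de $\iota$ devient nilpotent sous l'action de $z$. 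C'est le cœur de l'argument et la principale difficulté : contrôler précisément comment l'action de Hecke de $Z_L^+$ agit sur le support des fonctions dans $\Ind[P^-] U$, en utilisant que $\NPz$ est un sous-groupe ouvert compact et que $z \NPz z^{-1}$ se rétracte sur $\{1\}$ quand $z$ parcourt une suite contractante de $Z_L^+$.

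Alternativement — et c'est sans doute la voie la plus propre pour la rédaction — on peut invoquer directement les résultats d'Emerton : l'isomorphisme $U \iso \Ord[P](\Ind[P^-] U)$ est essentiellement \cite[Théorème 4.4.6]{Em2} ou son analogue, qui affirme que $\Ord[P]$ est adjoint à droite de $\Ind[P^-]$ et que l'unité de cette adjonction est un isomorphisme (c'est la pleine fidélité de l'induction parabolique au niveau des parties ordinaires). Dans ce cas, la preuve se réduit à : (1) rappeler la construction de l'unité d'adjonction, qui coïncide avec $\iota$ suivie de la localisation ; (2) citer le théorème d'Emerton garantissant que cette unité est un isomorphisme pour $U$ lisse localement admissible ; (3) vérifier la compatibilité $L^+$-équivariante de $\iota$ au niveau des invariants sous $\NPz$, ce qui est immédiat sur les formules. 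La seule subtilité restante est de s'assurer que l'hypothèse de locale admissibilité sur $U$ est bien celle qui rend applicable le théorème de localisation rappelé plus haut (isomorphisme $\HOrdP V \cong A[Z_L(F)] \otimes_{A[Z_L^+]} \Hc(\NPz,V)$ pour $V$ localement admissible sur $G(F)$), en notant que $\Ind[P^-] U$ est localement admissible sur $G(F)$ dès que $U$ l'est sur $L(F)$.
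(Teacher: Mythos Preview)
Il y a une véritable lacune dans ta construction explicite de $f_u$. Telle que tu la définis --- supportée sur toute la grosse cellule $P^-(F) N_P(F)$ avec $f_u(p^- n) = p^- \cdot u$ pour $n$ parcourant tout $N_P(F)$, prolongée par $0$ ailleurs --- la fonction $f_u$ n'est \emph{pas} lisse dès que $u \neq 0$. En effet, vue comme fonction sur le quotient compact $P^-(F) \backslash G(F)$, c'est la fonction constante égale à $u$ sur l'image (ouverte, dense, mais non fermée) de $N_P(F)$ et nulle sur le complémentaire ; elle n'est donc pas localement constante au voisinage des points du bord (penser à la fonction caractéristique de $\Qp$ dans $\mathbb{P}^1(\Qp)$ pour $G=\GL_2$). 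Ton argument de lissité ne traite que l'intérieur de la grosse cellule et ignore ce bord.

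La correction est immédiate et c'est exactement ce que fait l'article : on restreint à $n \in \NPz$, autrement dit $f_u$ est l'image de $1_{\NPz} u \in \Clis_c(N_P(F),U)$ par l'injection naturelle $\Clis_c(N_P(F),U) \hookrightarrow \Ind[P^-] U$ provenant de l'immersion ouverte $N_P(F) \hookrightarrow P^-(F) \backslash G(F)$ (\cite[Lemme 4.1.9]{Em1}). Comme $\NPz$ est ouvert \emph{et} compact dans $N_P(F)$, son image dans $P^-(F) \backslash G(F)$ est ouverte et fermée, et $f_u$ est bien lisse. Note qu'avec cette définition correcte, la flèche $u \mapsto f_u$ n'est que $L^+$-équivariante pour l'action de Hecke (et non $L(F)$-équivariante), ce qui est précisément l'énoncé du lemme. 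L'article conclut en invoquant \cite[Proposition 4.2.7]{Em1} pour l'isomorphisme $U \iso \Ord[P](\Clis_c(N_P(F),U))$ puis \cite[Lemme 4.3.1]{Em1} pour identifier ce dernier à $\Ord[P](\Ind[P^-] U)$ --- c'est essentiellement ta \og voie alternative \fg{}, à ceci près que les références pertinentes se trouvent dans \cite{Em1} plutôt que \cite{Em2}.
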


\begin{proof}
La projection $G(F) \twoheadrightarrow P^-(F) \backslash G(F)$ induit une immersion ouverte $N_P(F) \hookrightarrow P^-(F) \backslash G(F)$, d'où une injection naturelle $P(F)$-équivariante (voir \cite[Lemme 4.1.9]{Em1})
\begin{equation*}
\Clis_c(N_P(F),U) \hookrightarrow \Ind[P^-] U
\end{equation*}
qui induit un isomorphisme $L(F)$-équivariant (voir \cite[Lemme 4.3.1]{Em1})
\begin{equation*}
\Ord[P] \left( \Clis_c(N_P(F),U) \right) \iso \Ord[P] \left( \Ind[P^-] U \right).
\end{equation*}
On déduit de \cite[Proposition 4.2.7]{Em1} que l'injection naturelle $A$-linéaire
\begin{equation*}
U \hookrightarrow \Clis_c(N_P(F),U)^{\NPz}
\end{equation*}
définie par $u \mapsto 1_{\NPz} u$ avec $1_{\NPz}$ la fonction caractéristique de $\NPz$ sur $N_P(F)$ est $L^+$-équivariante et induit après localisation en $Z_L^+$ un isomorphisme $L(F)$-équivariant
\begin{equation*}
U \iso \Ord[P] \left( \Clis_c(N_P(F),U) \right).
\end{equation*}
En composant ces deux injections, on obtient une injection naturelle qui vérifie bien la propriété de l'énoncé.
\end{proof}

On rappelle enfin que la relation d'adjonction entre les foncteurs $\Ind[P^-]$ et $\Ord[P]$ induit une suite exacte de $A$-modules
\begin{multline} \label{SEextP}
0 \to \Ext^1_{L(F)} \left( U,\Ord[P] V \right) \to \Ext^1_{G(F)} \left( \Ind[P^-] U,V \right) \\
\to \Hom_{L(F)} \left( U,\HOrdP[1] V \right)
\end{multline}
pour toutes représentations lisses localement admissibles $U$ et $V$ de $L(F)$ et $G(F)$ respectivement sur $A$ (voir \cite[§ 3.7]{Em2}). Le premier morphisme non trivial est induit par le foncteur exact $\Ind[P^-]$ et le morphisme naturel $\Ind[P^-](\Ord[P] V) \to V$ et le second associe à la classe d'une extension
\begin{equation*}
0 \to V \to \E \to \Ind[P^-] U \to 0
\end{equation*}
le morphisme $\delta$ de la suite exacte longue
\begin{equation*}
0 \to \Ord[P] V \to \Ord[P] \E \to \Ord[P] \left( \Ind[P^-] U \right) \overset{\delta}{\longrightarrow} \HOrdP[1] V
\end{equation*}
composé avec l'inverse de l'isomorphisme du lemme \ref{lemm:inj}.

\subsection{\texorpdfstring{Compatibilité avec l'induction en degré $1$}{Compatibilité avec l'induction en degré 1}} \label{ssec:compat}

On suppose $F =\Qp$ et on fixe $\alpha \in \Delta$.
On note $\Ga \subset G$ le sous-groupe fermé engendré par $T$ et les sous-groupes radiciels correspondant aux racines $\pm \alpha$.
On note $\Pa \subset G$ (resp. $\Pa^- \subset G$) le sous-groupe parabolique $B\Ga$ (resp. $B^-\Ga$) et $\Na \subset \Pa$ son radical unipotent.
On note $\Ba \subset \Ga$ (resp. $\Ba^- \subset \Ga$) le sous-groupe de Borel $B \cap \Ga$ (resp. $B^- \cap \Ga$) et $\Na'' \subset \Ba$ son radical unipotent.
On note enfin $\Za \subset T$ le centre de $\Ga$.

\medskip

Soient $U$ et $V$ des représentations lisses localement admissibles de $T(\Qp)$ sur $A$.
D'après le lemme \ref{lemm:inj} pour les triplets $(G,B,T)$ et $(\Ga,\Ba,T)$, on a des isomorphismes naturels $T(\Qp)$-équivariants
\begin{gather}
\OrdQp \left( \IndQp U \right) \cong U, \label{isoOrd} \\
\OrdQp[\Ba] \left( \Inda U \right) \cong U \label{isoOrda}.
\end{gather}
Pour tout $\beta \in \Delta$, on note $V^\beta$ la représentation lisse de $T(\Qp)$ sur $A$ dont le $A$-module sous-jacent est $V$ et sur lequel $t \in T(\Qp)$ agit à travers $s_\beta(t)$.
D'après \cite[Corollaire 4.2.4 (i)]{JH} pour les triplets $(G,B,T)$ et $(\Ga,\Ba,T)$, on a des isomorphismes naturels $T(\Qp)$-équivariants
\begin{gather}
\HOrdQp \left( \IndQp V \right) \cong \bigoplus_{\beta \in \Delta} V^\beta \otimes (\omb), \label{isoHOrd} \\
\HOrdQp[\Ba] \left( \Inda V \right) \cong V^\alpha \otimes (\oma). \label{isoHOrda}
\end{gather}
Le but de cette sous-section est de démontrer le résultat suivant.

\begin{prop} \label{prop:compat}
On a un diagramme commutatif de $A$-modules
\begin{equation*} \begin{tikzcd}[column sep=tiny,row sep=small]
\Ext^1_{G(\Qp)} \left( \IndQp U,\IndQp V \right) \rar & \displaystyle \bigoplus_{\beta \in \Delta} \Hom_{T(\Qp)} \left( U,V^\beta \otimes (\omb) \right) \\
\Ext^1_{\Ga(\Qp)} \left( \Inda U,\Inda V \right) \uar[hook] \rar & \Hom_{T(\Qp)} \left( U,V^\alpha \otimes (\oma) \right) \uar[hook]
\end{tikzcd} \end{equation*}
où les morphismes horizontaux sont donnés par le second morphisme non trivial de la suite exacte \eqref{SEextP} pour les triplets $(G,B,T)$ et $(\Ga,\Ba,T)$ en utilisant les isomorphismes \eqref{isoHOrd} et \eqref{isoHOrda}, l'injection verticale de gauche est induite par le foncteur $\IndQp[\Pa^-]$ et l'injection verticale de droite est l'injection naturelle correspondant à $\alpha \in \Delta$.
\end{prop}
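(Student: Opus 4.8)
The proposition asserts a compatibility between the "boundary map to $\operatorname{Hom}$" part of the five-term exact sequence for $(G,B,T)$ and the analogous one for $(\Ga,\Ba,T)$, intertwined by parabolic induction from $\Pa^-$ on the left and by the natural inclusion of the $\alpha$-summand on the right. The plan is to unwind both instances of \eqref{SEextP} and to trace an arbitrary extension class through the diagram, using the transitivity of derived ordinary parts with respect to the chain of parabolic subgroups $\Ba^- \subset \Ba^- \Ga = \Pa^- \cap \dots$, er, more precisely the factorisation $\IndQp[B^-] = \IndQp[\Pa^-] \circ \Inda[\Ba^-]$ and the corresponding composition of ordinary-parts functors $\OrdQp[\Ba] \circ \Ord[\Pa]$. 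The key observation is that $\Ba = \Ba$ is the "Borel of the Levi $\Ga$" inside $\Pa$, so the two-step computation of $\HOrdQp[B]$ through $\HOrdP[\Pa]$ then $\HOrdQp[\Ba]$ is governed by a Hochschild–Serre-type spectral sequence, which in the relevant low degrees degenerates; this is exactly what produces the direct sum over $\Delta$ in \eqref{isoHOrd} with the $\beta=\alpha$ term coming from the $\Ga$-part.

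**Key steps.** First I would recall that by \cite[§ 3.7]{Em2} the second nontrivial map in \eqref{SEextP} sends the class of $0 \to \IndQp V \to \E \to \IndQp U \to 0$ to the connecting map $\delta \colon \OrdQp(\IndQp U) \to \HOrdQp[1](\IndQp V)$ precomposed with the inverse of the isomorphism \eqref{isoOrd}; likewise for $\Ga$. Second, I would start from a class in $\Ext^1_{\Ga(\Qp)}(\Inda U, \Inda V)$ represented by $0 \to \Inda V \to \Ea \to \Inda U \to 0$, apply the exact functor $\IndQp[\Pa^-]$ to get the extension $0 \to \IndQp V \to \IndQp[\Pa^-]\Ea \to \IndQp U \to 0$ (using $\IndQp[B^-] = \IndQp[\Pa^-]\circ\Inda[\Ba^-]$ twice), and then compute $\OrdQp$ of this sequence. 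The crucial point is the naturality of the long exact sequence of derived ordinary parts: applying $\Ord[\Pa]$ first turns the $G$-extension into the $\Ga$-extension $\Ea$ (by Lemma \ref{lemm:inj} and \cite[Lemme 4.3.1]{Em1}, $\Ord[\Pa](\IndQp[\Pa^-] W) \cong W$ naturally, and $\HOrdP[1,\Pa](\IndQp[\Pa^-] W) = 0$ by \cite[§ 4.3]{JH} since $N_{\Pa}$ contributes no higher cohomology for an induced rep from $\Pa^-$), and then applying $\OrdQp[\Ba]$ recovers the $\Ga$-boundary map. Third, I would identify, via the transitivity isomorphism and the explicit form of \eqref{isoHOrd}–\eqref{isoHOrda}, that the composite $\HOrdQp[1,\Ba](\Inda V) \hookrightarrow \HOrdQp[1,B](\IndQp V)$ induced on cohomology is precisely the inclusion of the $\beta=\alpha$ summand $V^\alpha \otimes (\oma)$ into $\bigoplus_{\beta\in\Delta} V^\beta\otimes(\omb)$ — this is where one invokes \cite[Corollaire 4.2.4]{JH} and its proof, which computes $\Hc(N(\Qp),-)$ via a Koszul/Bruhat filtration whose degree-one graded pieces are indexed by $\Delta$. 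Putting these together makes the square commute.

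**Main obstacle.** The technical heart is verifying that the vertical map on the right of the diagram — the one induced on $\HOrdQp[1]$ by the factorisation of inductions and the transitivity of derived ordinary parts — genuinely coincides with the "naive" inclusion of the $\alpha$-summand coming from \eqref{isoHOrd} and \eqref{isoHOrda}. In other words, one must check that the identification \eqref{isoHOrd} is compatible with the two-step computation through $\Pa$, i.e. that the $\alpha$-component of $\HOrdQp[1,B](\IndQp V)$ is functorially the image of $\HOrdQp[1,\Ba](\Inda V)$ under the map induced by $\IndQp[\Pa^-]$. I expect this to require going back to the cochain-level construction of \cite[§ 4.3]{JH}: one realises $\Hc(N(\Qp), \IndQp V)$ and $\Hc(N_\alpha''(\Qp), \Inda V)$ through compatible complexes (the Bruhat filtration on $G/B^-$ restricted to the $\Pa^-$-orbit closure through $\dot{s}_\alpha$), and one checks that the inclusion $N_\alpha''(\Qp) \hookrightarrow N(\Qp)$ together with the projection to the $\Ga$-factor induces exactly the summand inclusion in degree one. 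Once that cochain-level compatibility is in hand — together with the vanishing $\HOrdP[1,\Pa](\IndQp[\Pa^-](\text{anything})) = 0$ and the naturality of $\delta$ in the long exact sequences — the commutativity of the square follows by a diagram chase, so I would organise the write-up as: (1) reduce to a statement about the map on $\HOrd[1]$'s induced by $\IndQp[\Pa^-]$; (2) prove that statement via the explicit Bruhat-filtration complexes; (3) conclude by naturality.
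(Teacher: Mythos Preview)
Your three-step plan at the end --- reduce to a compatibility statement about the map on $\HOrdQp[1]$'s, prove that statement at the cochain/Bruhat-filtration level, and conclude by naturality of the connecting homomorphism --- is exactly the shape of the paper's argument. However, your intermediate reasoning contains a genuine error: the claim that $\HOrdP[1](\IndQp[\Pa^-] W) = 0$ (with respect to $\Pa$) is false. Indeed, if it held and if the Grothendieck spectral sequence you implicitly invoke existed and degenerated, one would get $\HOrdQp[1](\IndQp V) \cong \HOrdQp[1,\Ba](\Inda V)$, i.e.\ a single summand $V^\alpha \otimes (\oma)$, contradicting \eqref{isoHOrd} as soon as $\lvert\Delta\rvert > 1$. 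In fact $\HOrdP[1](\IndQp[\Pa^-](\Inda V))$, computed via the Bruhat decomposition of $\Pa^-\backslash G$, carries precisely the summands indexed by $\beta \in \Delta \setminus \{\alpha\}$; the $\alpha$-summand comes from the $E_2^{1,0}$-edge, not from vanishing of $E_2^{0,1}$. This does not wreck your plan, since you do not actually need that vanishing to recover $\Ea$ from $\IndQp[\Pa^-]\Ea$ (functoriality of $\Ord[\Pa]\circ\IndQp[\Pa^-]\cong\mathrm{id}$ suffices), but it does mean the spectral-sequence route to the edge map needs more care than you indicate, and the existence of that spectral sequence for derived ordinary parts is itself not a given.

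The paper sidesteps all of this by never invoking $\Ord[\Pa]$ or a composition-of-functors spectral sequence. Instead it works directly at the level of group cohomology of the compact open $N_0 \subset N(\Qp)$, using the semidirect product $N_0 = \Naz'' \ltimes \Naz$ (with $\Na''_\alpha$ the unipotent radical of $\Ba$ and $\Na$ that of $\Pa$). Two preparatory lemmas construct explicit $T^+$-equivariant maps $\Hc[i](\Naz'', \Inda(-)) \to \Hc[i](N_0, \IndQp(-))$ for $i=0,1$ (the degree-$1$ map being inflation composed with the injection coming from Lemma~\ref{lemm:inj} for $(G,\Pa,\Ga)$) and identify their localisations as, respectively, the identity on $U$ and the inclusion of the $\alpha$-summand into $\bigoplus_\beta V^\beta\otimes(\omb)$. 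The proposition then follows by taking $\Naz$-invariants of the induced $G$-extension, recognising the original $\Ga$-extension inside via Lemma~\ref{lemm:inj}, taking $\Naz''$-cohomology, and comparing connecting maps through an inflation square. This is morally the Hochschild--Serre picture you describe, but executed at the cochain level before localisation rather than through derived ordinary parts --- which is both more elementary and avoids the unestablished spectral sequence.
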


On a un produit semi-direct $N = \Na'' \ltimes \Na$.
On fixe un sous-groupe ouvert compact standard $N_0$ de $N(\Qp)$ compatible avec la décomposition radicielle (voir \cite[Appendice A]{JH}). On note $\Naz$ et $\Naz''$ les intersections respectives de $\Na(\Qp)$ et $\Na''(\Qp)$ avec $N_0$. D'après \cite[Proposition A.7]{JH}, on a un produit semi-direct $N_0 = \Naz'' \ltimes \Naz$.
On utilise les résultats de \cite[§ 3.2]{JH} pour calculer la cohomologie de $N_0$ et l'action de Hecke de $T^+$ à travers ce dévissage de $N_0$.

\begin{lemm} \label{lemm:compat0}
On a une injection naturelle $T^+$-équivariante
\begin{equation*}
\Hc[0]\left(\Naz'',\Inda U\right) \hookrightarrow \Hc[0]\left(N_0,\IndQp U\right)
\end{equation*}
dont le localisé en $T^+$, à travers les isomorphismes \eqref{isoOrd} et \eqref{isoOrda}, est l'identité sur $U$.
\end{lemm}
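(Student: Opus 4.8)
The strategy is to construct the injection explicitly at the level of $\Hc[0]$ (i.e. invariants) and then to check compatibility with both the Hecke action of $T^+$ and with the identifications \eqref{isoOrd}--\eqref{isoOrda}. Recall that $N_0 = \Naz'' \ltimes \Naz$, so that for any smooth representation $W$ of $N_0$ one has $W^{N_0} = (W^{\Naz})^{\Naz''}$, and in particular, applying this to $W = \IndQp U$ restricted to $N_0$, we get $\Hc[0](N_0, \IndQp U) = \big(\Hc[0](\Naz, \IndQp U)\big)^{\Naz''}$. The first step is therefore to identify $\Hc[0](\Naz, \IndQp U)$ as a representation of $\Naz''$ (and of $T^+$): by the same argument that underlies the proof of Lemma \ref{lemm:inj}, the open immersion $N(\Qp) \hookrightarrow P^-(\Qp)\backslash G(\Qp)$ together with the semidirect decomposition $N = \Na'' \ltimes \Na$ should yield a natural $\Ba^+$-equivariant identification of $\Hc[0](\Naz, \IndQp U)$ — or at least of a natural subobject containing the image of $\Inda U$ — with something built out of $\Inda U$ and functions on $\Na(\Qp)$. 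Concretely, restricting a function in $\IndQp U$ to (a translate of) $N(\Qp)$ and taking $\Naz$-invariants amounts to restricting to functions on $\Na''(\Qp)$, which is exactly the space giving $\Inda U$ after the analogous open immersion $N''(\Qp) \hookrightarrow (\Ba^-\Ga \cap \ldots)$; this is where \cite[§ 3.2]{JH} enters, to control the cohomology of $N_0$ through the decomposition $N_0 = \Naz'' \ltimes \Naz$.

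The second step is to write down the map. Using the injection $\Clis_c(N(\Qp), U) \hookrightarrow \IndQp U$ of \cite[Lemme 4.1.9]{Em1} and its analogue $\Clis_c(\Na''(\Qp), U) \hookrightarrow \Inda U$, together with the decomposition $N(\Qp) = \Na''(\Qp) \times \Na(\Qp)$ (as varieties), a function on $\Na''(\Qp)$ supported suitably can be pushed into a function on $N(\Qp)$ by tensoring with $1_{\Naz}$ on the $\Na$-factor. Passing to $N_0$-invariants, this gives the desired natural map $\Hc[0](\Naz'', \Inda U) \to \Hc[0](N_0, \IndQp U)$; injectivity is clear since the construction is a restriction-of-support map that can be inverted on its image by evaluating on the $\Na''(\Qp)$-slice. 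One must check this map is $T^+$-equivariant: here the point is that $T$ normalizes both $\Na''$ and $\Na$, and the Hecke action of $T^+$ on $\Hc[0]$ is computed (via \cite[§ 3.2]{JH}) compatibly with the semidirect decomposition — so the $T^+$-action on the source, which is the Hecke action for the pair $(\Ga, \Ba, T)$ relative to $\Naz''$, matches the one induced on the target relative to $N_0$.

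The third and final step is the localization computation. After inverting $Z_\alpha^+$ — equivalently, taking the $T^+$-localization and restricting to the relevant component — the left-hand side becomes $\OrdQp[\Ba](\Inda U) \cong U$ by \eqref{isoOrda}, and the right-hand side becomes $\OrdQp(\IndQp U) \cong U$ by \eqref{isoOrd}. One has to verify that our map, after localization, is the identity on $U$ under these two identifications. By construction both isomorphisms \eqref{isoOrd} and \eqref{isoOrda} come from the \emph{same} recipe of Lemma \ref{lemm:inj}, namely $u \mapsto 1_{(\cdot)_0} u$ followed by localization; our map sends $1_{\Naz''} u \mapsto 1_{\Naz''} \otimes 1_{\Naz} u = 1_{N_0} u$, so it visibly intertwines these two recipes, and the conclusion follows.

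\textbf{Main obstacle.} I expect the delicate point to be the first step: giving a clean, functorial description of $\Hc[0](\Naz, \IndQp U)$ as an $\Naz''$- and $T^+$-representation, and in particular checking that the image of $\Inda U$ really does land inside the $\Naz''$-invariants of a well-understood piece, rather than a messier space. Equivalently, one must be careful that taking $\Naz$-invariants of $\IndQp U$ and then $\Naz''$-invariants genuinely recovers $\Inda U$-type functions on the nose, with the Hecke action tracked correctly through \cite[§ 3.2]{JH}; the semidirect product $N_0 = \Naz'' \ltimes \Naz$ is not direct, so the $\Naz''$-action on $\Hc[0](\Naz, \IndQp U)$ involves the conjugation action on $\Naz$, and one needs \cite[Proposition A.7]{JH} and the compatibility results of \cite[§ 3.2]{JH} to handle this bookkeeping cleanly.
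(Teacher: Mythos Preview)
Your plan is essentially correct and lands on the same map, but it takes a more laborious route than the paper. The paper bypasses your ``main obstacle'' entirely by invoking transitivity of induction: since $\IndQp U \cong \IndQp[\Pa^-](\Inda U)$, applying Lemma~\ref{lemm:inj} directly to the triplet $(G,\Pa,\Ga)$ with the $\Ga(\Qp)$-representation $\Inda U$ immediately yields a natural $T^+ \ltimes \Naz''$-equivariant injection
\[
\Inda U \hookrightarrow \left(\IndQp U\right)^{\Naz},
\]
and one then just takes $\Naz''$-invariants. This defines the injection on the \emph{whole} of $\Hc[0](\Naz'',\Inda U)$ at once, not only on the compactly supported part --- your step~2 only makes the map explicit on $\Clis_c(\Na''(\Qp),U)$, and extending it to all of $(\Inda U)^{\Naz''}$ would in effect force you to rediscover the transitivity argument. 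The localization step is then a one-line commutative square assembled from the instances of Lemma~\ref{lemm:inj} for $(G,B,T)$, $(\Ga,\Ba,T)$ and $(G,\Pa,\Ga)$, rather than the explicit function-tracking you describe. In short, the idea you are missing is that Lemma~\ref{lemm:inj} already applies at the intermediate parabolic $\Pa$, and that single observation dissolves the bookkeeping you flagged as the delicate point.
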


\begin{proof}
En utilisant le lemme \ref{lemm:inj} pour le triplet $(G,\Pa,\Ga)$ et l'isomorphisme de foncteurs $\IndQp \cong \IndQp[\Pa^-] \Inda$, on voit que l'on a une injection naturelle $T^+ \ltimes \Naz''$-équivariante
\begin{equation} \label{inj}
\Inda U \hookrightarrow \left( \IndQp U \right)^{\Naz}
\end{equation}
et en prenant les invariants par $\Naz''$, on obtient l'injection naturelle $T^+$-équivariante de l'énoncé. Elle s'insère dans un diagramme commutatif de représentations lisses de $T^+$ sur $A$
\begin{equation*} \begin{tikzcd}
\left( \Inda U \right)^{\Naz''} \rar[hook] & \left( \IndQp U \right)^{N_0} \\
U \uar[hook] \rar[equals] & U \uar[hook]
\end{tikzcd} \end{equation*}
où les injections verticales sont données par le lemme \ref{lemm:inj} pour les triplets $(G,B,T)$ et $(\Ga,\Ba,T)$. En utilisant ce lemme, on en déduit que l'injection construite vérifie bien la propriété de l'énoncé.
\end{proof}

\begin{lemm} \label{lemm:compat1}
On a un morphisme naturel $T^+$-équivariant
\begin{equation*}
\Hc[1]\left(\Naz'',\Inda V\right) \to \Hc[1]\left(N_0,\IndQp V\right)
\end{equation*}
dont le localisé en $T^+$, à travers les isomorphismes \eqref{isoHOrd} et \eqref{isoHOrda}, est l'injection naturelle
\begin{equation*}
V^\alpha \otimes (\oma) \hookrightarrow \bigoplus_{\beta \in \Delta} V^\beta \otimes (\omb).
\end{equation*}
\end{lemm}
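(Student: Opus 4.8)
The strategy is to run the same Hochschild–Serre spectral sequence for the semi-direct product $N_0 = \Naz'' \ltimes \Naz$ that underlies \cite[§ 3.2]{JH}, applied to $\IndQp V \cong \IndQp[\Pa^-](\Inda V)$, and to isolate the degree-$1$ edge map. Since $\Hc[0](\Naz, \IndQp V) \supset (\Inda V)$ by \eqref{inj} (applied with $V$ in place of $U$), the inflation map gives a natural $T^+$-equivariant map $\Hc[1](\Naz'', (\Inda V)) \to \Hc[1](\Naz'', \Hc[0](\Naz,\IndQp V)) \to \Hc[1](N_0, \IndQp V)$, where the last arrow is the inflation in the Hochschild–Serre sequence. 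This is the morphism of the statement; naturality in $V$ is clear since every map in sight is functorial.

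First I would make the localization in $T^+$ explicit. By \cite[Corollaire 4.2.4 (i)]{JH} applied to the triple $(\Ga,\Ba,T)$, the localization of $\Hc[1](\Naz'', (\Inda V))$ at $T^+$ is $V^\alpha \otimes (\oma)$, and applied to $(G,B,T)$ it identifies the localization of $\Hc[1](N_0,\IndQp V)$ with $\bigoplus_{\beta\in\Delta} V^\beta \otimes (\omb)$; I must check that the composite above lands in the $\beta=\alpha$ summand and is the identity there. For this I would use the compatibility of the Hochschild–Serre edge maps with the explicit description of $\HcOrdQp$ in terms of the Bruhat stratification (\cite[§ 3.2, § 4.2]{JH}): the class generating $V^\alpha \otimes (\oma)$ inside $\Hc[1](\Naz'', (\Inda V))$ is represented by a cocycle supported on the open $\Ba^- s_\alpha \Ba^-$-cell of $\Ga$, and its inflation to $N_0$ corresponds, under the isomorphism \eqref{isoHOrd}, precisely to the cell $B^- s_\alpha B^-$ — which is the stratum indexing the $\beta=\alpha$ summand. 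The other summands come from strata $B^- s_\beta B^-$ with $\beta \neq \alpha$, which do not meet the image of the open immersion $\Na(\Qp) \hookrightarrow \Pa^-(\Qp)\backslash G(\Qp)$ inflated from $\Ga$, so the composite is zero on them. Localizing at $T^+$ then turns the inclusion of this one cell into the asserted natural injection $V^\alpha \otimes (\oma) \hookrightarrow \bigoplus_{\beta\in\Delta} V^\beta\otimes(\omb)$, because by construction the natural injection corresponding to $\alpha$ in Proposition \ref{prop:compat} is exactly the inclusion of the $\beta=\alpha$ summand.

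The main obstacle I anticipate is the bookkeeping needed to match the Hecke actions and the cell/cocycle normalizations on the two sides: one has to verify that the identification of $\Hc[1](\Naz'',(\Inda V))$ with the $\alpha$-cell contribution inside $\Hc[1](N_0,\IndQp V)$ is compatible with the $T^+$-actions used to define the respective localizations, i.e. that no extra twist by a modulus character intervenes when passing through the Hochschild–Serre sequence for $N_0 = \Naz'' \ltimes \Naz$. This is controlled by \cite[Appendice A]{JH} (compatibility of $N_0$ with the root decomposition) together with the functoriality of \cite[Corollaire 4.2.4]{JH} in the two triples $(G,B,T)$ and $(\Ga,\Ba,T)$, so it should reduce to checking that the chosen cocycle representatives transform by the same character $\oma$ on both sides — which they do, since $(\oma)$ is intrinsic to the root $\alpha$ and does not depend on the ambient group. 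Once this normalization is settled, the computation of the localized map is the formal statement above.
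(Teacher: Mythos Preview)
Your plan is essentially the paper's own approach: the morphism is defined exactly as you say, by functoriality of $\Hc[1](\Naz'',-)$ applied to the inclusion \eqref{inj} (with $V$ in place of $U$) followed by the Hochschild--Serre inflation $\Hc[1](\Naz'',(\IndQp V)^{\Naz}) \to \Hc[1](N_0,\IndQp V)$, and the identification of the localisation proceeds via the Bruhat filtration. The one place where your outline is looser than the paper is the verification that the localised map lands in the $\alpha$-summand: rather than arguing geometrically that the $s_\beta$-cells for $\beta\neq\alpha$ ``do not meet the image'', the paper builds a commutative ladder by passing to the quotients $(\Inda V)/I_0^\alpha$ and $((\IndQp V)/I_0)^{\Naz}$, identifies these explicitly with $V^\alpha$ and $\Clis_c(\Na(\Qp),V)^{\Naz}$, and checks that the induced map is $v\mapsto 1_{\Naz}v$; the localisation of $\Hc[1](\Naz'',-)$ applied to this concrete map is then shown (via \cite[§~3.3]{JH} and the isomorphism $\Hc[1](\Naz'',V^\alpha)\cong V^\alpha\otimes(\oma)$) to be the inclusion of the $\alpha$-summand. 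Your cell-support heuristic is the right intuition, but to turn it into a proof you will end up writing down precisely these diagrams.
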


\begin{rema}
On peut montrer que le morphisme de l'énoncé est injectif.
\end{rema}

\begin{proof}
Soit $I_0 \subset \IndQp V$ (resp. $I^\alpha_0 \subset \Inda V$) la sous-$B(\Qp)$-représentation (resp. sous-$\Ba(\Qp)$-représentation) constituée des fonctions à support dans l'ouvert $B^-(\Qp)N(\Qp)$ (resp. $\Ba^-(\Qp)\Na''(\Qp)$).
On a un diagramme commutatif de représentations lisses de $T^+ \ltimes \Naz''$ sur $A$
\begin{equation*} \begin{tikzcd}
\Inda V \rar[hook] \dar[two heads] & \left( \IndQp V \right)^{\Naz} \dar[two heads] \\
\left( \Inda V \right) / I^\alpha_0 \rar[hook] & \left( \left( \IndQp V \right) / I_0 \right)^{\Naz} \\
V^\alpha \rar[hook] \uar{\rotatebox{90}{$\sim$}} & \Clis_c(\Na(\Qp),V)^{\Naz} \uar[hook]
\end{tikzcd} \end{equation*}
où le morphisme horizontal supérieur est l'injection \eqref{inj} avec $V$ au lieu de $U$, le morphisme horizontal du milieu est induit par le précédent, le morphisme horizontal inférieur est défini par $v \mapsto 1_{\Naz} v$ avec $1_{\Naz}$ la fonction caractéristique de $\Naz$ sur $\Na(\Qp)$, les morphismes verticaux supérieurs sont les applications quotients et les morphismes verticaux inférieurs sont expliqués dans \cite[§ 2.1]{JH}.
On précise l'action de $T^\times \ltimes \Naz''$ sur les termes inférieurs (pour celui de droite, il faut prendre l'action de Hecke de $T^+$ correspondante et l'action induite de $\Naz''$) : $\Naz''$ agit trivialement sur $V^\alpha$ et par translation à droite sur $\Clis_c(\Na(\Qp),V)$ ; l'action de $T^+$ sur $V^\alpha$ est la restriction de celle de $T(\Qp)$ et l'action de $t \in T^+$ sur $f \in \Clis_c(\Na(\Qp),V)$ est donnée par
\begin{equation*}
(t \cdot f)(n) = (\dot{s}_\alpha t \dot{s}_\alpha^{-1}) \cdot f(t^{-1}nt)
\end{equation*}
pour tout $n \in \Na(\Qp)$.
Enfin, les localisés en $T^+$ des injections horizontales sont des isomorphismes (pour celle du haut cela résulte du lemme \ref{lemm:inj} et on en déduit le résultat pour celle du milieu, tandis que pour celle du bas cela résulte de \cite[§ 3.3]{JH}).

En appliquant le foncteur $\Hc[1](\Naz'',-)$ et en utilisant le morphisme d'inflation $\Hc[1](\Naz'',(-)^{\Naz}) \hookrightarrow \Hc[1](N_0,-)$, on obtient un diagramme commutatif de représentations lisses de $T^+$ sur $A$
\begin{equation*} \begin{tikzcd}
\Hc[1] \left( \Naz'', \Inda V \right) \rar \dar[two heads] & \Hc[1] \left( N_0, \IndQp V \right) \dar[two heads] \\
\Hc[1] \left( \Naz'', \left( \Inda V \right) / I^\alpha_0 \right) \rar & \Hc[1] \left( N_0, \left( \IndQp V \right) / I_0 \right) \\
\Hc[1](\Naz'',V^\alpha) \rar \uar{\rotatebox{90}{$\sim$}} & \Hc[1](N_0,\Clis_c(\Na(\Qp),V)) \uar[hook]
\end{tikzcd} \end{equation*}
où les morphismes verticaux supérieurs sont encore surjectifs et le morphisme vertical inférieur droit est encore injectif (voir \cite[§ 2.2]{JH}) et les localisés en $T^+$ des morphismes horizontaux sont encore des isomorphismes.
De plus, les localisés en $T^+$ des morphismes verticaux supérieurs sont des isomorphismes (car les localisés en $T^+$ de $\Hc[1](\Naz'',I^\alpha_0)$ et $\Hc[1](N_0,I_0)$ sont nuls, voir \cite[§ 3.3]{JH}).

D'après \cite[Proposition 3.1.8]{JH} avec $n=1$, $N_0=\Naz''$ et $V^\alpha$ au lieu de $V$, on a un isomorphisme naturel $T^+$-équivariant
\begin{equation} \label{isoVa}
\Hc[1](\Naz'',V^\alpha) \iso V^\alpha \otimes (\oma).
\end{equation}
L'isomorphisme \eqref{isoHOrda} est la composée des localisés en $T^+$ de l'isomorphisme \eqref{isoVa} et des morphismes verticaux de gauche du précédent diagramme.
De plus, la composée des localisés en $T^+$ de l'isomorphisme \eqref{isoVa} et du morphisme horizontal inférieur et des morphismes verticaux de droite du précédent diagramme est une injection naturelle $T^+$-équivariante
\begin{equation*}
V^\alpha \otimes (\oma) \hookrightarrow \HOrdQp \left( \IndQp V \right)
\end{equation*}
dont la composée avec l'isomorphisme \eqref{isoHOrd} est l'injection de l'énoncé.
On en conclut que le morphisme horizontal supérieur du précédent diagramme vérifie bien la propriété de l'énoncé.
\end{proof}

\begin{proof}[Démonstration de la proposition \ref{prop:compat}]
On remarque que le morphisme vertical de gauche du diagramme de l'énoncé est bien injectif car le foncteur exact $\IndQp[\Pa^-]$ admet un quasi-inverse à gauche d'après le lemme \ref{lemm:inj} pour le triplet $(G,\Pa,\Ga)$.

Soit $\Ea$ une extension entre représentations lisses de $\Ga(\Qp)$ sur $A$
\begin{equation*}
0 \to \Inda V \to \Ea \to \Inda U \to 0.
\end{equation*}
En appliquant le foncteur exact $\IndQp[\Pa^-]$, on obtient une extension entre représentations lisses de $G(\Qp)$ sur $A$
\begin{equation} \label{ext}
0 \to \IndQp V \to \IndQp[\Pa^-] \Ea \to \IndQp U \to 0.
\end{equation}
En prenant les invariants par $\Naz$, on obtient une suite exacte de représentations lisses de $T^+ \ltimes \Naz''$ sur $A$
\begin{equation*}
0 \to \left( \IndQp V \right)^{\Naz} \to \left( \IndQp[\Pa^-] \Ea \right)^{\Naz} \to \left( \IndQp U \right)^{\Naz}
\end{equation*}
et on note $I \subset (\IndQp U)^{\Naz}$ l'image du dernier morphisme.
En utilisant le lemme \ref{lemm:inj} pour le triplet $(G,\Pa,\Ga)$, on en déduit un diagramme commutatif de représentations lisses de $T^+ \ltimes \Naz''$ sur $A$
\begin{equation*} \begin{tikzcd}[column sep=small]
0 \rar & \left( \IndQp V \right)^{\Naz} \rar & \left( \IndQp[\Pa^-] \Ea \right)^{\Naz} \rar & I \rar & 0 \\
0 \rar & \Inda V \uar[hook] \rar & \Ea \uar[hook] \rar & \Inda U \uar[hook] \rar & 0
\end{tikzcd} \end{equation*}
dont les lignes sont exactes. En prenant la cohomologie de $\Naz''$ à valeurs dans ces suites exactes courtes, on obtient un diagramme commutatif de représentations lisses de $T^+$ sur $A$
\begin{equation*} \begin{tikzcd}
\Hc[0] \left( \Naz'',I \right) \rar{\delta'_\alpha} & \Hc[1] \left( \Naz'',\left( \IndQp V \right)^{\Naz} \right) \\
\Hc[0] \left( \Naz'',\Inda U \right) \uar[hook] \rar{\delta_\alpha} & \Hc[1] \left( \Naz'',\Inda V \right) \uar.
\end{tikzcd} \end{equation*}
Par ailleurs, on a un diagramme commutatif de représentations lisses de $T^+$ sur $A$
\begin{equation*} \begin{tikzcd}
\Hc[0] \left( N_0,\IndQp U \right) \rar{\delta} & \Hc[1] \left( N_0,\IndQp V \right) \\
\Hc[0] \left( \Naz'',I \right) \uar[hook] \rar{\delta'_\alpha} & \Hc[1] \left( \Naz'',\left( \IndQp V \right)^{\Naz} \right) \uar[hook]
\end{tikzcd} \end{equation*}
où l'injection verticale de gauche est induite par l'inclusion $I \subset \IndQp U$, le morphisme vertical de droite est l'inflation et $\delta$ est le morphisme obtenu par la suite exacte longue de cohomologie associée à la suite exacte \eqref{ext}.

En combinant les deux précédents diagrammes, on obtient un diagramme commutatif de représentations lisses de $T^+$ sur $A$
\begin{equation*} \begin{tikzcd}
\Hc[0] \left( N_0, \IndQp U \right) \rar{\delta} & \Hc[1] \left( N_0, \IndQp V \right) \\
\Hc[0] \left( \Naz'',\Inda U \right) \uar[hook] \rar{\delta_\alpha} & \Hc[1] \left( \Naz'',\Inda V \right) \uar
\end{tikzcd} \end{equation*}
où l'injection verticale de gauche est celle du lemme \ref{lemm:compat0} et le morphisme vertical de droite est celui du lemme \ref{lemm:compat1}. En utilisant ces deux lemmes, on voit qu'à travers les isomorphismes \eqref{isoOrd}, \eqref{isoOrda}, \eqref{isoHOrd} et \eqref{isoHOrda}, le localisé en $T^+$ de ce diagramme est un diagramme commutatif de représentations lisses de $T(\Qp)$ sur $A$
\begin{equation*} \begin{tikzcd}
U \rar{\widetilde{\delta}} & \bigoplus_{\beta \in \Delta} V^\beta \otimes (\omb) \\
U \uar[equals] \rar{\widetilde{\delta}_\alpha} & V^\alpha \otimes (\oma) \uar[hook]
\end{tikzcd} \end{equation*}
où le morphisme vertical de droite est l'injection naturelle. Enfin, $\widetilde{\delta}$ (resp. $\widetilde{\delta}_\alpha$) est l'image de la classe de l'extension $\IndQp[\Pa^-] \Ea$ (resp. $\Ea$) par le morphisme horizontal supérieur (resp. inférieur) du diagramme de l'énoncé.
\end{proof}

\subsection{Calculs sur les représentations ordinaires} \label{ssec:ord}

Soient $P \subset G$ un sous-groupe parabolique standard et $L \subset P$ le sous-groupe de Levi standard.
Pour tout sous-groupe parabolique standard $Q \subset L$, on note $L_Q \subset Q$ le sous-groupe de Levi standard et $Q^- \subset L$ le sous-groupe parabolique opposé à $Q$ par rapport à $L_Q$. Dans ce cas, $BQ \subset P$ est un sous-groupe parabolique standard de $G$, $L_Q \subset BQ$ est le sous-groupe de Levi standard, on note $W_{L_Q} \subset W$ le groupe de Weyl de $(L_Q,T)$ et on pose
\begin{equation*}
\W_{BQ} \dfn \left\{ \text{$w \in W$ | $w$ de longueur maximale dans $W_{L_Q} w$} \right\}.
\end{equation*}
Pour tout sous-groupe parabolique standard $Q \subset Q' \subset L$, on a $\W_{BQ'} \subset \W_{BQ}$. De plus, $1 \in \W_{BQ}$ si et seulement si $Q=B_L$ ; $s_\alpha \in \W_{BQ}$ avec $\alpha \in \Delta-\Delta_L$ si et seulement si $Q=B_L$ ; $s_\alpha \in \W_{BQ}$ avec $\alpha \in \Delta_L$ si et seulement si $Q=Q_\alpha$ avec $Q_\alpha \subset L$ le sous-groupe parabolique standard correspondant à $\alpha$.

\begin{defi}
Pour tout sous-groupe parabolique standard $Q \subset L$, on définit la \emph{représentation spéciale} relative à $Q$ de $L(F)$ sur $A$
\begin{equation*}
\Sp_Q \dfn \frac{\IndL[Q^-] \un}{\sum_{Q \subsetneqq Q' \subset L} \IndL[Q'^-] \un}
\end{equation*}
avec $Q'$ parmi les sous-groupes de $L$ et $\un$ la représentation triviale sur $A$. Lorsque $Q=B_L$, on l'appelle la \emph{représentation de Steinberg} de $L(F)$ sur $A$ et on la note $\St$. Lorsque $Q=Q_\alpha$ avec $\alpha \in \Delta_L$, on la note $\Sp_\alpha$.
\end{defi}

\begin{rema} \label{rema:spe}
Les représentations spéciales sont des représentations lisses admissibles de $L(F)$ sur $A$.
Lorsque $A=\ke$, elles sont irréductibles (voir \cite[Théorème 2]{Vig} pour $\St$, \cite[Corollaire 4.3]{GK} lorsque le système de racines de $L$ ne contient pas de facteur exceptionnel et \cite[Théorème 7.2]{Her} dans le cas général déployé) et deux à deux non isomorphes d'après \cite[Corollaire 4.4 (a)]{GK}.
Dans ce cas, elles forment les constituants irréductibles de $\IndL \un$, chacune apparaissant avec multiplicité un (voir \cite[Corollaire 4.4 (b)]{GK} lorsque le système de racines de $L$ ne contient pas de facteur exceptionnel et \cite[Théorème 7.3]{Her} dans le cas général déployé).
\end{rema}

Soit $\sigma$ une représentation spéciale de $L(F)$ sur $A$. On note $Q \subset L$ le sous-groupe parabolique standard correspondant et on pose
\begin{equation*}
\W_\sigma \dfn \W_{BQ} - \bigcup_{Q \subsetneqq Q' \subset L} \W_{BQ'}
\end{equation*}
avec $Q'$ parmi les sous-groupes paraboliques de $L$.
Par convention, $\w_\sigma$ désigne toujours un élément de $\W_\sigma$.

Soit $\chi : L(F) \to A^\times$ un caractère lisse. On rappelle que la restriction à $T(F)$ identifie les caractères de $L(F)$ avec les caractères de $T(F)$ triviaux sur $\im \alpha^\vee$ pour tout $\alpha \in \Delta_L$ (voir \cite[Proposition 3.3]{Abe}).

\begin{defi}
On appelle \emph{représentation ordinaire}\footnote{Cette terminologie, empruntée à \cite{Pas}, est justifiée par le fait suivant : lorsque $A=\ke$, les représentations ordinaires irréductibles sont exactement les sous-quotients irréductibles de séries principales. \label{foot:ord}} de $G(F)$ sur $A$ une représentation de la forme $\Ind[P^-] (\sigma \otimes \chi)$.
\end{defi}

On commence par définir la \emph{filtration de Bruhat} de $\Ind[P^-] (\sigma \otimes \chi)$. On note $d$ la dimension de $N$ et pour tout $w \in W$, on note $N_w \subset N$ le sous-groupe fermé $N \cap \dot{w}^{-1} N \dot{w}$.

\begin{prop} \label{prop:filind}
Il existe une filtration $(I_r^\sigma)_{r \in \llbrack -1,d \rrbrack}$ de $\Ind[P^-] (\sigma \otimes \chi)$ par des sous-$B(F)$-représentations et pour tout $r \in \llbrack 0,d \rrbrack$, on a une suite exacte courte de représentations lisses de $B(F)$ sur $A$
\begin{equation*}
0 \to I^\sigma_{r-1} \to I^\sigma_r \to \bigoplus_{\ell(\w_\sigma)=r} \Clis_c(N_{\w_\sigma}(F),\chi) \to 0.
\end{equation*}
\end{prop}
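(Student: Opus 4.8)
The plan is to transport the classical Bruhat stratification of the flag variety $P^-(F)\backslash G(F)$ to the induced representation $\Ind[P^-](\sigma\otimes\chi)$, exactly as in \cite{JH} for principal series, but keeping track of the fact that here the inducing data is $\sigma\otimes\chi$ rather than a character. First I would recall that $G(F)=\coprod_{w} P^-(F)\,\dot w\, B(F)$, the Bruhat decomposition, and that the subset of $w$ that actually occurs is a set of representatives for $W_L\backslash W$; an open cell can be parametrised by $N_w(F)$. For each $r$ I would set $I^\sigma_r$ to be the subspace of functions in $\Ind[P^-](\sigma\otimes\chi)$ supported on the closed union of cells of dimension $\le r$. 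Because taking the closure of a union of cells is again such a union and is $B(F)$-stable on the right, each $I^\sigma_r$ is a sub-$B(F)$-representation, and $I^\sigma_{-1}=0$, $I^\sigma_d=\Ind[P^-](\sigma\otimes\chi)$. This gives the filtration.

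Next I would identify the graded pieces. Fixing $r\in\llbrack 0,d\rrbrack$, the quotient $I^\sigma_r/I^\sigma_{r-1}$ is, by restriction of functions, the space of sections over the open subset of the $r$-dimensional stratum that vanish on the boundary — i.e. compactly supported functions on that locally closed stratum. The stratum of dimension $r$ is the disjoint union, over the relevant $w$ with $\ell(w)=r$, of copies of $N_w(F)$; here is where the combinatorics of \S\ref{ssec:ord} enters: the $w$ indexing the open cells of $P^-(F)\backslash G(F)$ that meet the support condition and carry a well-defined restriction of $\sigma\otimes\chi$ are exactly the $\w_\sigma$. So I would get
\begin{equation*}
I^\sigma_r/I^\sigma_{r-1} \;\cong\; \bigoplus_{\ell(\w_\sigma)=r} \Clis_c\!\left(N_{\w_\sigma}(F),W_{\w_\sigma}\right)
\end{equation*}
for an appropriate coefficient space $W_{\w_\sigma}$, and the point is to check $W_{\w_\sigma}\cong\chi$ as a $B(F)$-module for the relevant right translation action. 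This last identification is the crux: on the cell indexed by $\w_\sigma$, the value of an induced function is determined by a value in the $\sigma\otimes\chi$-space, but because $\w_\sigma$ has maximal length in $W_{L_Q}\w_\sigma$ (the defining property of $\W_{BQ}$), the twist by $\sigma$ contributes only its $B_L$-cofixed line, which is one-dimensional and on which $L(F)$ acts trivially — so only the character $\chi$ survives. I would pin this down using the description of $\Sp_Q$ as a quotient of $\IndL[Q^-]\un$ and the analogous statement for principal series from \cite[§ 2.1]{JH}: indeed $\Ind[P^-](\sigma\otimes\chi)$ is a subquotient of $\Ind[(BQ)^-](\un\otimes\chi)=\Ind[B^-]\chi$ (a principal series), its Bruhat filtration is the one induced from that of the principal series, and the graded pieces of the latter are the $\Clis_c(N_w(F),\chi)$ indexed by all $w$; restricting to the special representation cuts down the index set to $\W_\sigma$ and leaves the coefficient $\chi$ untouched.

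The main obstacle I anticipate is precisely this compatibility with the subquotient structure: one must verify that the Bruhat filtration of the principal series $\Ind[B^-]\chi$ restricts to a filtration of the sub-object $\sum_{Q\subsetneqq Q'}\Ind[Q'^-](\un\otimes\chi)$ and descends to the quotient $\Ind[P^-](\sigma\otimes\chi)$ stratum by stratum — i.e. that the induced graded pieces split off as the summands indexed by $\W_\sigma=\W_{BQ}-\bigcup_{Q\subsetneqq Q'}\W_{BQ'}$, with no interaction between strata of different dimensions. This is a standard-but-delicate bookkeeping argument about supports of functions under the closure relations among Bruhat cells, together with the remark (Remark \ref{rema:spe}) that each special representation appears with multiplicity one in $\IndL\un$, which guarantees the relevant sub and quotient are complementary in the appropriate graded sense. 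Once that is in place the short exact sequences of the statement are immediate, and exactness follows from the exactness of passing to compactly supported sections over a locally closed piece of the flag variety.
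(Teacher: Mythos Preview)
Your eventual approach --- define the filtration on $\Ind[P^-](\sigma\otimes\chi)$ as the image of the Bruhat filtration of $\Ind[(BQ)^-]\chi$ under the quotient map, and then identify the graded pieces by comparing with the graded pieces upstairs --- is exactly the paper's. Two points, however, need correction.

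First, the opening ``direct'' plan (filter $\Ind[P^-](\sigma\otimes\chi)$ by supports of functions on the Bruhat cells of $P^-(F)\backslash G(F)$) does \emph{not} give the statement. Those cells are indexed by $W_L\backslash W$, not by $\W_\sigma$, and the graded pieces would be $\Clis_c(N_w(F),\sigma\otimes\chi)$ with the full (infinite-dimensional when $Q\neq L$) space $\sigma$ as coefficient, not $\chi$. Your sentence ``the twist by $\sigma$ contributes only its $B_L$-cofixed line'' is not right: nothing in passing to compactly supported sections on a cell kills the $\sigma$-direction. The filtration of the proposition is strictly finer than that one; it is only visible after writing $\Ind[P^-](\sigma\otimes\chi)$ as a quotient of $\Ind[(BQ)^-]\chi$ (not of $\Ind[B^-]\chi$: your equation $\Ind[(BQ)^-](\un\otimes\chi)=\Ind[B^-]\chi$ is false unless $Q=B_L$).

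Second, the ``main obstacle'' you flag is real and is not settled by multiplicity one alone. What is needed is the strictness identity
\[
I_{r-1}^{BQ}\;\cap\;\sum_{Q\subsetneqq Q'\subset L} I_r^{BQ'}\;=\;\sum_{Q\subsetneqq Q'\subset L} I_{r-1}^{BQ'}
\]
for every $r$, which ensures that the graded of the quotient is the quotient of the gradeds. The paper does not derive this from Remark~\ref{rema:spe}; it invokes a lemma of Abe--Henniart--Herzig--Vign\'eras (\cite{AHHV2}, proved as in \cite[V.16 Lemma~9]{AHHV}) at exactly this point. Once that identity is in hand, the compatibility diagram between the filtrations of $\Ind[(BQ)^-]\chi$ and $\Ind[(BQ')^-]\chi$ (your ``splitting off the summands indexed by $\W_\sigma$'') finishes the proof. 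So your outline is right from the second paragraph on, but you should drop the direct-cell description and replace the ``standard bookkeeping'' step by a reference to that lemma.
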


\begin{proof}
D'après \cite[§ 2.3]{JH}\footnote{Ces résultats, énoncés pour les caractères de la forme $\eta \circ \detfr$ avec $\eta : F^\times \to A^\times$ un caractère lisse et $\operatorname{d\acute{e}t} : G \to \GL_1$ un caractère algébrique, sont encore vrais pour un caractère lisse $\chi : L(F) \to A^\times$ quelconque. \label{foot:car}}, il existe pour tout sous-groupe parabolique standard $Q' \subset L$ une filtration $(I_r^{BQ'})_{r \in \llbrack -1,d \rrbrack}$ de $\Ind[(BQ')^-] \chi$ et pour tout $r \in \llbrack 0,d \rrbrack$, on a une suite exacte courte de représentations lisses de $B(F)$ sur $A$
\begin{equation} \label{filBQ}
0 \to I^{BQ'}_{r-1} \to I^{BQ'}_r \to \bigoplus_{\ell(\w_{BQ'})=r} \Clis_c(N_{\w_{BQ'}}(F),\chi) \to 0.
\end{equation}
De plus pour tous sous-groupes paraboliques standards $Q'_1 \subset Q'_2 \subset L$, l'injection $G(F)$-équivariante
\begin{equation} \label{injQ12}
\Ind[(BQ'_2)^-] \chi \hookrightarrow \Ind[(BQ'_1)^-] \chi
\end{equation}
est stricte par rapport aux filtrations $(I_r^{BQ'_1})_{r \in \llbrack -1,d \rrbrack}$ et $(I_r^{BQ'_2})_{r \in \llbrack -1,d \rrbrack}$ : pour tout $r \in \llbrack 0,d \rrbrack$, on a un diagramme commutatif de représentations lisses de $B(F)$ sur $A$
\begin{equation} \label{filBQ12} \begin{tikzcd}
0 \rar & I^{BQ'_1}_{r-1} \rar & I^{BQ'_1}_r \rar & \displaystyle \bigoplus_{\ell(\w_{BQ'_1})=r} \Clis_c(N_{\w_{BQ'_1}}(F),\chi) \rar & 0 \\
0 \rar & I^{BQ'_2}_{r-1} \uar[hook] \rar & I^{BQ'_2}_r \uar[hook] \rar & \displaystyle \bigoplus_{\ell(\w_{BQ'_2})=r} \Clis_c(N_{\w_{BQ'_2}}(F),\chi) \uar[hook] \rar & 0
\end{tikzcd} \end{equation}
où les injections verticales de gauche et du milieu sont induites par l'injection \eqref{injQ12} et l'injection verticale de droite est l'injection naturelle correspondant à l'inclusion $\W_{BQ'_2} \subset \W_{BQ'_1}$.
Enfin, on a un isomorphisme $G(F)$-équivariant
\begin{equation} \label{isoIndSp}
\Ind[P^-] (\sigma \otimes \chi) \cong \frac{\Ind[(BQ)^-] \chi}{\sum_{Q \subsetneqq Q' \subset L} \Ind[(BQ')^-] \chi}
\end{equation}
avec $Q'$ parmi les sous-groupes paraboliques de $L$, qui résulte de l'exactitude de la torsion par $\chi$ et du foncteur $\Ind[P^-]$, ainsi que de l'isomorphisme $L(F)$-équivariant $(\IndL[Q'^-] \un) \otimes \chi \cong \IndL[Q'^-] \chi$.
Soit $(I^\sigma_r)_{r \in \llbrack -1,d \rrbrack}$ l'image de la filtration $(I_r^{BQ})_{r \in \llbrack -1,d \rrbrack}$ de $\Ind[(BQ)^-] \chi$ dans le quotient $\Ind[P^-] (\sigma \otimes \chi)$.
Un résultat de \cite{AHV} (qui se démontre de la même façon que \cite[V.16 Lemme 9]{AHHV}) montre que $I_{r-1}^{BQ} \cap (\sum_{Q \subsetneqq Q' \subset L} I_r^{BQ'}) = \sum_{Q \subsetneqq Q' \subset L} I_{r-1}^{BQ'}$ pour tout $r \in \llbrack 0,d \rrbrack$, donc l'isomorphisme \eqref{isoIndSp} induit un isomorphisme $B(F)$-équivariant
\begin{equation} \label{isofilIndSp}
I^\sigma_r/I^\sigma_{r-1} \cong \frac{I^{BQ}_r/I^{BQ}_{r-1}}{\sum_{Q \subsetneqq Q' \subset L} (I^{BQ'}_r/I^{BQ'}_{r-1})}.
\end{equation}
En utilisant le diagramme \eqref{filBQ12} avec $Q_1=Q$ et $Q_2=Q'$ pour tout sous-groupe parabolique $Q \subsetneqq Q' \subset L$, on déduit de l'isomorphisme \eqref{isofilIndSp} que la filtration $(I_r^\sigma)_{r \in \llbrack -1,d \rrbrack}$ vérifie bien la propriété de l'énoncé.
\end{proof}

\begin{rema}
Cette filtration est déjà définie et étudiée dans \cite{Vig} pour les représentations spéciales de $G(F)$ (c'est-à-dire lorsque $P=G$).
\end{rema}

Soit $N_0$ un sous-groupe ouvert compact de $N(F)$. On montre que la filtration de Bruhat de $\Ind[P^-] (\sigma \otimes \chi)$ induit une filtration des $A$-modules $\Hc(N_0,\Ind[P^-] (\sigma \otimes \chi))$.

\begin{prop} \label{prop:filcohom}
Pour tout $r \in \llbrack 0,d \rrbrack$, on a des suites exactes courtes de représentations lisses de $T^+$ sur $A$
\begin{equation*}
0 \to \Hc(N_0,I_{r-1}^\sigma) \to \Hc(N_0,I_r^\sigma) \to \bigoplus_{\ell(\w_\sigma)=r} \Hc(N_0,\Clis_c(N_{\w_\sigma}(F),\chi)) \to 0.
\end{equation*}
\end{prop}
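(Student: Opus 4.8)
Le plan est d'appliquer le $\delta$-foncteur $\Hc(N_0,-)$, à valeurs dans les représentations lisses de $T^+$ sur $A$ via l'action de Hecke de \cite[§ 3]{Em2}, aux suites exactes courtes de la proposition \ref{prop:filind}, et de montrer que les suites exactes longues obtenues se scindent en suites exactes courtes. Je procéderais par dévissage à partir du cas des séries principales, en suivant pas à pas la démonstration de la proposition \ref{prop:filind}. Le point de départ est ce cas, traité en cohomologie dans \cite[§ 3.2 et § 3.3]{JH} (dont les démonstrations, de nature locale, restent valables pour un caractère lisse $\chi : L(F) \to A^\times$ quelconque, cf. la note \ref{foot:car}) : pour tout sous-groupe parabolique standard $Q' \subset L$ et tout $r \in \llbrack 0,d \rrbrack$, le foncteur $\Hc(N_0,-)$ transforme la suite exacte \eqref{filBQ} en une suite exacte courte de représentations lisses de $T^+$ sur $A$
\begin{equation*}
0 \to \Hc(N_0,I^{BQ'}_{r-1}) \to \Hc(N_0,I^{BQ'}_r) \to \bigoplus_{\ell(\w_{BQ'})=r} \Hc(N_0,\Clis_c(N_{\w_{BQ'}}(F),\chi)) \to 0.
\end{equation*}

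Je transporterais ensuite en cohomologie la stricte compatibilité du diagramme \eqref{filBQ12}. L'injection verticale de droite de \eqref{filBQ12} étant l'inclusion d'un facteur direct, son image par $\Hc(N_0,-)$ l'est encore ; par récurrence sur $r$, au moyen des suites exactes courtes ci-dessus et d'une chasse au diagramme en chaque degré cohomologique, j'en déduirais que les morphismes $\Hc(N_0,I^{BQ'_2}_r) \to \Hc(N_0,I^{BQ'_1}_r)$ induits par \eqref{injQ12} sont injectifs et stricts pour les filtrations induites. Comme les intersections des $I^{BQ'}_r$ sont encore de cette forme (celle de $I^{BQ'_1}_r$ et $I^{BQ'_2}_r$ correspondant au sous-groupe parabolique de $L$ engendré par $Q'_1$ et $Q'_2$, cf. \eqref{injQ12}), il en résulterait que $\Hc(N_0,-)$ envoie le treillis distributif $(I^{BQ'}_r)_{Q \subset Q' \subset L}$ de sous-$B(F)$-représentations de $I^{BQ}_r$ sur un treillis distributif de sous-$A$-modules de $\Hc(N_0,I^{BQ}_r)$ ; en particulier $\Hc(N_0,-)$ commuterait à la formation du sous-module $\sum_{Q \subsetneqq Q' \subset L} I^{BQ'}_r$, et le lemme combinatoire de \cite{AHHV2} invoqué dans la démonstration de la proposition \ref{prop:filind} se relèverait en
\begin{equation*}
\Hc(N_0,I^{BQ}_{r-1}) \cap \sum_{Q \subsetneqq Q' \subset L} \Hc(N_0,I^{BQ'}_r) = \sum_{Q \subsetneqq Q' \subset L} \Hc(N_0,I^{BQ'}_{r-1}).
\end{equation*}

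Il resterait alors à conclure comme dans la démonstration de la proposition \ref{prop:filind}. D'après le paragraphe précédent, $\Hc(N_0,-)$ est exact sur la suite exacte courte $0 \to \sum_{Q \subsetneqq Q' \subset L} I^{BQ'}_r \to I^{BQ}_r \to I^\sigma_r \to 0$, de sorte que l'isomorphisme \eqref{isoIndSp} fournit un isomorphisme $\Hc(N_0,I^\sigma_r) \cong \Hc(N_0,I^{BQ}_r) / \sum_{Q \subsetneqq Q' \subset L} \Hc(N_0,I^{BQ'}_r)$ compatible aux inclusions en $r$ ; l'analogue en cohomologie de l'isomorphisme \eqref{isofilIndSp} s'en déduit :
\begin{equation*}
\Hc(N_0,I^\sigma_r) / \Hc(N_0,I^\sigma_{r-1}) \cong \bigoplus_{\ell(\w_\sigma)=r} \Hc(N_0,\Clis_c(N_{\w_\sigma}(F),\chi)),
\end{equation*}
tandis que l'égalité combinatoire relevée au paragraphe précédent donne l'injectivité de $\Hc(N_0,I^\sigma_{r-1}) \to \Hc(N_0,I^\sigma_r)$, d'où les suites exactes courtes de l'énoncé.

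Le point délicat sera le transport de la stricte compatibilité en cohomologie (deuxième paragraphe) : le foncteur $\Hc(N_0,-)$ n'étant pas exact, il faut réellement exploiter que, sur le gradué des filtrations de Bruhat, tout se scinde en facteurs directs indexés par l'ensemble ordonné des $Q'$, puis propager cette décomposition à travers les suites exactes courtes du premier paragraphe, en chaque degré cohomologique (la dimension cohomologique de $N_0$ étant finie, une récurrence descendante sur le degré est de toute façon disponible). Le reste est formel une fois admis le cas des séries principales de \cite{JH}.
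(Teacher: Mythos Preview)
Your plan can be made to work, but it takes a much more elaborate route than the paper, and the step you flag as delicate really is a gap as stated.

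The paper's argument is essentially one line. To split the long exact cohomology sequence attached to the short exact sequence of Proposition~\ref{prop:filind} into short exact sequences, it suffices to show that
\[
\Hc(N_0, I^\sigma_r) \longrightarrow \bigoplus_{\ell(\w_\sigma)=r} \Hc(N_0, \Clis_c(N_{\w_\sigma}(F), \chi))
\]
is surjective in every degree. This map sits in a commutative square below the corresponding (surjective, by \cite[§~2.3]{JH}) map for $I^{BQ}_r$, and the right vertical arrow is the projection onto the direct summands indexed by $\W_\sigma \subset \W_{BQ}$, which is obviously surjective. Hence the bottom map is surjective and one is done. There is no need to analyse the full lattice $(I^{BQ'}_r)_{Q'}$ in cohomology, nor to lift the combinatorial lemma of \cite{AHHV2}.

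Your approach instead attempts to reconstruct the entire quotient description of $I^\sigma_r$ at the level of $\Hc(N_0,-)$. The claim that injectivity and strictness of the $\Hc(N_0,I^{BQ'_2}_r) \to \Hc(N_0,I^{BQ'_1}_r)$ force $\Hc(N_0,-)$ to send the distributive lattice $(I^{BQ'}_r)_{Q'}$ to a distributive lattice of submodules does not follow formally: a non-exact functor can preserve inclusions without preserving intersections or sums. To carry this out one would need an induction on $r$ using that, on graded pieces, the inclusions are direct-summand inclusions indexed by the subsets $\W_{BQ'} \subset \W_{BQ}$, together with the Coxeter identity $\W_{BQ'_1} \cap \W_{BQ'_2} = \W_{B\langle Q'_1,Q'_2\rangle}$, and then a Mayer--Vietoris argument for the sums. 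This is feasible but considerably heavier than the paper's surjectivity shortcut.
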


\begin{proof}
On reprend les notations de la preuve de la proposition \ref{prop:filind} et on fixe $r \in \llbrack 0,d \rrbrack$.
On a des diagrammes commutatifs de représentations lisses de $T^+$ sur $A$
\begin{equation*} \begin{tikzcd}[column sep=tiny,row sep=small]
0 \rar & \Hc(N_0,I^{BQ}_{r-1}) \dar[two heads] \rar & \Hc(N_0,I^{BQ}_r) \dar[two heads] \rar & \displaystyle \bigoplus_{\ell(\w_{BQ})=r} \Hc(N_0,\Clis_c(N_{\w_{BQ}}(F),\chi)) \dar[two heads] \rar & 0 \\
0 \rar & \Hc(N_0,I^\sigma_{r-1}) \rar & \Hc(N_0,I^\sigma_r) \rar & \displaystyle \bigoplus_{\ell(\w_\sigma)=r} \Hc(N_0,\Clis_c(N_{\w_\sigma}(F),\chi)) \rar & 0
\end{tikzcd} \end{equation*}
dont les lignes supérieures sont exactes d'après \cite[§ 2.3]{JH} (voir la note de bas de page \ref{foot:car}).
La surjectivité des morphismes verticaux de droite est immédiate, ce qui permet de montrer que le dernier morphisme non trivial de chaque ligne inférieure est surjectif.
Par la suite exacte longue de cohomologie associée à la suite exacte courte de la proposition \ref{prop:filind}, on en conclut que les lignes inférieures sont exactes.
\end{proof}

On calcule maintenant les parties ordinaires dérivées d'une représentation ordinaire de $G(F)$ sur $A$. Pour tout $w \in W$, on note $\alpha_w$ le caractère algébrique de la représentation adjointe de $T$ sur $\detfr_F \Lie(N_{w_0w})$ avec $w_0 \in W$ l'élément de longueur maximale. On a $\alpha_1=0$ et $\alpha_{s_\alpha}=\alpha$ pour tout $\alpha \in \Delta$.

\begin{theo} \label{theo:hord}
Soient $\sigma$ une représentation spéciale de $L(F)$ sur $A$, $\chi : L(F) \to A^\times$ un caractère lisse et $n \in \Nbb$. On suppose $A=\ke$ ou $n \leq 1$. Alors on a un isomorphisme $T(F)$-équivariant
\begin{equation*}
\HOrd[n] \left( \Ind[P^-] (\sigma \otimes \chi) \right) \cong \bigoplus_{[F:\Qp] \cdot \ell(\w_\sigma) = n} \w_\sigma^{-1}(\chi) \cdot (\oma_{\w_\sigma}).
\end{equation*}
\end{theo}

\begin{proof}
D'après \cite[Théorème 3.4.7]{Em2}, les termes des suites exactes de la proposition \ref{prop:filcohom} sont réunions de sous-$A$-modules de type fini stables par $T^+$. En utilisant \cite[Lemme 3.2.1]{Em2}, on en déduit des suites exactes courtes de représentations lisses de $T(F)$ sur $A$
\begin{multline*}
0 \to \HOrd(I_{r-1}^\sigma) \to \HOrd(I_r^\sigma) \\
\to \bigoplus_{\ell(\w_\sigma)=r} \HOrd(\Clis_c(N_{\w_\sigma}(F),\chi)) \to 0
\end{multline*}
pour tout $r \in \llbrack 0,d \rrbrack$.
On conclut en utilisant \cite[Théorème 4.2.2]{JH}.
\end{proof}

On explicite enfin ce calcul en degrés $0$ et $1$.

\begin{coro} \label{coro:ord}
Soient $\sigma$ une représentation spéciale de $L(F)$ sur $A$ et $\chi : L(F) \to A^\times$ un caractère lisse. On a un isomorphisme $T(F)$-équivariant
\begin{equation*}
\Ord \left( \Ind[P^-] (\sigma \otimes \chi) \right)
\cong
\begin{cases}
\chi & \text{si $\sigma=\St$,} \\
0 & \text{sinon}.
\end{cases}
\end{equation*}
\end{coro}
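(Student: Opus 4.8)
The plan is to deduce the corollary directly from Theorem~\ref{theo:hord} specialised to $n=0$. Since $\Ord=\HOrd[0]$ by definition and the hypothesis $n\leq 1$ of the theorem is then satisfied, we obtain for arbitrary $A$ a $T(F)$-equivariant isomorphism
\begin{equation*}
\Ord\left(\Ind[P^-](\sigma\otimes\chi)\right)\cong\bigoplus_{[F:\Qp]\cdot\ell(\w_\sigma)=0}\w_\sigma^{-1}(\chi)\cdot(\oma_{\w_\sigma}).
\end{equation*}
Thus everything reduces to analysing the indexing set $\{\w_\sigma\in\W_\sigma\mid\ell(\w_\sigma)=0\}$.

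First I would note that the condition $[F:\Qp]\cdot\ell(\w_\sigma)=0$ forces $\ell(\w_\sigma)=0$, hence $\w_\sigma=1$, since the identity is the only element of length $0$ in $W$. Then, using the combinatorial facts recalled just before the definition of the sets $\W_{BQ}$ — namely that $1\in\W_{BQ}$ if and only if $Q=B_L$ — together with the definition $\W_\sigma=\W_{BQ}-\bigcup_{Q\subsetneqq Q'\subset L}\W_{BQ'}$ (in which $1\notin\W_{BQ'}$ for every $Q'\supsetneq B_L$), I would conclude that $1\in\W_\sigma$ precisely when the standard parabolic $Q\subset L$ attached to $\sigma$ equals $B_L$, that is, precisely when $\sigma=\St$.

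It then remains to read off the two cases. If $\sigma\neq\St$ the indexing set is empty, so the direct sum vanishes. If $\sigma=\St$ the indexing set is the singleton $\{1\}$; since $\alpha_1=0$ the character $\oma_{\w_\sigma}=\omega^{-1}\circ\alpha_1$ is trivial and $\w_\sigma^{-1}(\chi)=\chi$, so the direct sum is $\chi$. This establishes the stated dichotomy.

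I do not expect a genuine obstacle here: the substance lies entirely in Theorem~\ref{theo:hord} and in the elementary description of $\W_\sigma$. The only points requiring care are the bookkeeping identifications $\Ord=\HOrd[0]$ and $\alpha_1=0$, and the observation that $n=0\leq 1$, so that the theorem applies over any $A$ and not merely over $\ke$.
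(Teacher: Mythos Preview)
Your proposal is correct and follows essentially the same approach as the paper: specialise Theorem~\ref{theo:hord} to $n=0$, observe that the indexing condition forces $\w_\sigma=1$, and use the combinatorial characterisation of when $1\in\W_\sigma$ to single out the case $\sigma=\St$. Your write-up is in fact slightly more explicit than the paper's in verifying the hypotheses of the theorem and in identifying the resulting character via $\alpha_1=0$.
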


\begin{proof}
Soit $Q \subset L$ le sous-groupe parabolique correspondant à $\sigma$. On utilise le théorème \ref{theo:hord} avec $n=0$. La somme directe de l'isomorphisme est nulle sauf si $1 \in \W_\sigma$, auquel cas $1 \in \W_{BQ}$ (car $\W_\sigma \subset \W_{BQ}$) donc $Q=B_L$ d'où $\sigma=\St$ et on obtient ainsi l'isomorphisme de l'énoncé car $1$ est l’unique élément de longueur $0$ de $\W_\St$.
\end{proof}

\begin{coro} \label{coro:h1ord}
Soient $\sigma$ une représentation spéciale de $L(F)$ sur $A$ et $\chi : L(F) \to A^\times$ un caractère lisse.
\begin{enumerate}[(i)]
\item Si $F=\Qp$, alors on a un isomorphisme $T(\Qp)$-équivariant
\begin{multline*}
\HOrdQp \left( \IndQp[P^-] (\sigma \otimes \chi) \right) \\
\cong
\begin{cases}
\bigoplus_{\alpha \in \Delta - \Delta_L} s_\alpha(\chi) \cdot (\oma) & \text{si $\sigma=\St$,} \\
\chi \cdot (\oma) & \text{si $\sigma = \Sp_\alpha$ avec $\alpha \in \Delta_L$,} \\
0 & \text{sinon}.
\end{cases}
\end{multline*}
\item Si $F \neq \Qp$, alors $\HOrd[1] (\Ind[P^-] (\sigma \otimes \chi)) = 0$.
\end{enumerate}
\end{coro}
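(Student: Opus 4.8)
\subsection*{Plan of proof (Corollaire \ref{coro:h1ord})}

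The plan is to specialize Theorem \ref{theo:hord} to $n = 1$ and read off which elements $\w_\sigma \in \W_\sigma$ contribute to the direct sum, exactly as the proof of Corollaire \ref{coro:ord} does for $n = 0$. Part (ii) is then immediate: when $F \neq \Qp$ one has $[F:\Qp] \geq 2$, so the equation $[F:\Qp] \cdot \ell(\w_\sigma) = 1$ has no solution in $\Nbb$ (the left-hand side vanishes when $\ell(\w_\sigma) = 0$ and is $\geq 2$ otherwise). Hence the direct sum of Theorem \ref{theo:hord} is empty and $\HOrd[1](\Ind[P^-](\sigma \otimes \chi)) = 0$.

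For part (i) one has $F = \Qp$, so the condition $[F:\Qp] \cdot \ell(\w_\sigma) = 1$ becomes $\ell(\w_\sigma) = 1$, i.e.\ $\w_\sigma = s_\alpha$ for some $\alpha \in \Delta$. I would therefore determine, for each special $\sigma$ with associated standard parabolic $Q \subset L$, the set of simple reflections lying in $\W_\sigma = \W_{BQ} - \bigcup_{Q \subsetneqq Q' \subset L} \W_{BQ'}$, using the two combinatorial facts recalled before the definition of $\W_\sigma$: $s_\beta \in \W_{BQ'}$ with $\beta \in \Delta - \Delta_L$ holds if and only if $Q' = B_L$, and $s_\beta \in \W_{BQ'}$ with $\beta \in \Delta_L$ holds if and only if $Q' = Q_\beta$. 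If $\sigma = \St$, then $Q = B_L$: each $s_\alpha$ with $\alpha \in \Delta - \Delta_L$ lies in $\W_{BB_L}$ but in no $\W_{BQ'}$ with $Q' \supsetneq B_L$, so it survives in $\W_\St$, while each $s_\alpha$ with $\alpha \in \Delta_L$ lies in $\W_{BQ_\alpha}$ with $Q_\alpha \supsetneq B_L$ and is removed; the contributing elements are exactly the $s_\alpha$, $\alpha \in \Delta - \Delta_L$. If $\sigma = \Sp_\alpha$ with $\alpha \in \Delta_L$, then $Q = Q_\alpha$: no $s_\beta$ with $\beta \in \Delta - \Delta_L$ lies in $\W_{BQ_\alpha}$, $s_\beta$ with $\beta \in \Delta_L$ lies in $\W_{BQ_\alpha}$ if and only if $\beta = \alpha$, and $s_\alpha$ survives since $s_\alpha \in \W_{BQ'}$ forces $Q' = Q_\alpha$; the only contributing element is $s_\alpha$. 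In all other cases $Q$ corresponds to a subset of $\Delta_L$ of cardinality at least two, so $Q$ is neither $B_L$ nor any $Q_\beta$, whence $\W_{BQ}$ — and a fortiori $\W_\sigma$ — contains no simple reflection and the direct sum is empty.

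Finally, for a contributing element $\w_\sigma = s_\alpha$ one computes $\w_\sigma^{-1}(\chi) \cdot (\oma_{\w_\sigma}) = s_\alpha(\chi) \cdot (\oma)$ using $s_\alpha^{-1} = s_\alpha$ and $\alpha_{s_\alpha} = \alpha$; moreover when $\alpha \in \Delta_L$ the character $\chi$, being the restriction to $T(\Qp)$ of a character of $L(\Qp)$, is trivial on $\im \alpha^\vee$, so $s_\alpha(\chi) = \chi$. Assembling these identifications over the contributing elements in each of the three cases yields the stated isomorphism. The only mildly delicate points are the bookkeeping with $\W_\sigma$ and the observation that $s_\alpha$ fixes $\chi$ for $\alpha \in \Delta_L$; everything else is a direct specialization of Theorem \ref{theo:hord}.
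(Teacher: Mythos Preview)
Your proof is correct and follows essentially the same approach as the paper's: specialize Theorem \ref{theo:hord} to $n=1$, use the combinatorial facts about which simple reflections lie in $\W_{BQ}$ to identify the length-$1$ elements of $\W_\sigma$ in each case, and invoke $s_\alpha(\chi)=\chi$ for $\alpha\in\Delta_L$. Your case analysis is slightly more spelled out than the paper's, but the argument is the same.
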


\begin{proof}
Soit $Q \subset L$ le sous-groupe parabolique correspondant à $\sigma$. On utilise le théorème \ref{theo:hord} avec $n=1$. La somme directe de l'isomorphisme est nulle sauf si $F=\Qp$ et s'il existe $\alpha \in \Delta$ tel que $s_\alpha \in \W_\sigma$. Dans ce cas $s_\alpha \in \W_{BQ}$ (car $\W_\sigma \subset \W_{BQ}$) et ou bien $\alpha \in \Delta-\Delta_L$ donc $Q=B_L$ d'où $\sigma=\St$, ou bien $\alpha \in \Delta_L$ donc $Q=Q_\alpha$ d'où $\sigma=\Sp_\alpha$.
On obtient les isomorphismes de l'énoncé car d'une part les éléments de longueur $1$ de $\W_{\St}$ sont exactement les $s_\alpha$ avec $\alpha \in \Delta-\Delta_L$ et d'autre part si $\alpha \in \Delta_L$, alors $s_\alpha$ est l'unique élément de longueur $1$ de $\W_{\Sp_\alpha}$ et $s_\alpha(\chi)=\chi$.
\end{proof}

\section{Extensions entre induites}

Nous montrons l'existence de certaines extensions entre séries principales d'un groupe réductif de rang semi-simple $1$. Nous calculons ensuite les extensions entre certaines induites de $G(F)$ dans la catégorie des représentations continues unitaires admissibles de $G(F)$ sur $E$ (en utilisant les résultats pour les groupes réductifs de rang semi-simple $1$).
Nos démonstrations prouvent également les résultats analogues modulo $p$ (c'est-à-dire dans la catégorie des représentations lisses admissibles de $G(F)$ sur $\ke$).

\subsection{\texorpdfstring{Groupes réductifs de rang semi-simple $1$}{Groupes réductifs de rang semi-simple 1}} \label{ssec:ga}

On suppose $F=\Qp$ et on fixe $\alpha \in \Delta$. On reprend les notations de la sous-section \ref{ssec:compat}.
On commence par donner une description explicite de $\Ga$.

\begin{lemm} \label{lemm:Ga}
On a un isomorphisme $\Ga \cong T' \times \Ga'$ avec $T' \subset T$ un sous-tore et $\Ga' \in \{\SL_2,\GL_2,\PGL_2\}$.
\end{lemm}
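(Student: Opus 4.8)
The plan is to analyze the structure of $\Ga$, the subgroup of $G$ generated by $T$ and the root subgroups $U_{\pm\alpha}$, using the standard structure theory of connected reductive groups together with the fact that $\Ga$ has semisimple rank $1$. First I would observe that $\Ga$ is a connected reductive subgroup of $G$ containing the maximal torus $T$, so $T$ is a maximal torus of $\Ga$ and the root system of $(\Ga,T)$ is $\{\pm\alpha\}$, of rank $1$. Hence the derived group $\Ga^\der$ is semisimple of rank $1$, so $\Ga^\der$ is isomorphic either to $\SL_2$ or to $\PGL_2$ (these being the only connected semisimple groups of rank $1$, since a rank-$1$ semisimple group has root system $A_1$ and is determined by its fundamental group, which is either trivial or $\Z/2$).

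Next I would handle the decomposition $\Ga \cong T' \times \Ga'$. Set $T' \dfn (\ker\alpha)^\circ \subset T$, the connected component of the kernel of $\alpha$ viewed as a character of $T$; this is a subtorus of $T$, and since $\Ga$ is the centralizer of $T'$ in $G$ (as stated in the introduction: $\Ga$ is the centralizer of $(\ker\alpha)^\circ$), $T'$ is central in $\Ga$. The key step is then to show that multiplication induces an isomorphism $T' \times \Ga^\der \iso \Ga$, or more precisely that one can choose a complement $\Ga'$ with $\Ga' \in \{\SL_2, \GL_2, \PGL_2\}$. Since $\Ga$ is reductive with maximal central torus $Z_\Ga^\circ$ and derived group $\Ga^\der$, multiplication gives an isogeny $Z_\Ga^\circ \times \Ga^\der \to \Ga$; one must upgrade this to an actual direct product decomposition with a factor of one of the three listed forms. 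The cleanest route is: the coweight $\alpha^\vee$ spans a rank-$1$ subtorus $S$ of $T$; if $\alpha^\vee$ is part of a basis of $X_*(T)$ (equivalently, primitive and a direct summand) one can split off $S$, but in general one instead argues as follows. Consider the subtorus $T' = (\ker\alpha)^\circ$; then $T = T' \cdot S$ with $T' \cap S$ finite, and $\Ga = T' \cdot \Ga^\der$ with $T' \cap \Ga^\der \subset T' \cap S$ finite central. If $T' \cap \Ga^\der = 1$ we are done with $\Ga' = \Ga^\der \in \{\SL_2,\PGL_2\}$; otherwise $T' \cap \Ga^\der = Z(\Ga^\der) \cong \mu_2$ (forcing $\Ga^\der \cong \SL_2$), and I would enlarge $\Ga^\der$ to $\Ga' \dfn T'' \cdot \Ga^\der$ for a suitable one-dimensional subtorus, obtaining $\Ga' \cong \GL_2$ and a complementary subtorus $T'$ with $\Ga = T' \times \Ga'$; the identification $\GL_2 \cong (\mathbb{G}_m \times \SL_2)/\mu_2$ makes this explicit.

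The main obstacle I anticipate is precisely this last bookkeeping: producing an honest direct-product decomposition (not merely an isogeny) and identifying the semisimple-or-$\GL_2$ factor correctly, since a priori the center of $\Ga$ need not be connected and $T' \cap \Ga^\der$ could be nontrivial. The trichotomy $\{\SL_2, \GL_2, \PGL_2\}$ should correspond exactly to the three cases: $\Ga^\der \cong \PGL_2$; $\Ga^\der \cong \SL_2$ with $Z(\SL_2) \not\subset Z_\Ga^\circ$; and $\Ga^\der \cong \SL_2$ with $Z(\SL_2) \subset (\ker\alpha)^\circ$ (absorbed into a $\GL_2$). I would make the choice of $T'$ canonical by taking a splitting of the surjection $X_*(T) \twoheadrightarrow X_*(T)/(\text{saturation of }\Z\alpha^\vee)$ and translating back to tori, which at worst costs passing to the $\GL_2$ case; once $\Ga'$ is fixed, the product decomposition and the claim that $T'$ is a subtorus of $T$ are immediate from the construction.
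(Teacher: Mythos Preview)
Your plan is correct and follows essentially the same route as the paper: identify $\Ga^\der\in\{\SL_2,\PGL_2\}$, use the isogeny $\Za^\circ\times\Ga^\der\to\Ga$ (note that your $T'=(\ker\alpha)^\circ$ equals $\Za^\circ$), and split into three cases according to whether $\Za^\circ\cap\Ga^\der$ is trivial or $\mu_2$. The only difference is cosmetic: in the $\GL_2$ case the paper splits the torus $\Za^\circ$ directly as $T'\times\GL_1$ with $\mu_2\subset\GL_1$ (using that any finite subgroup of a torus lies in a rank-one direct factor), whereas you phrase this via a splitting of cocharacter lattices; both give $(\GL_1\times\SL_2)/\mu_2\cong\GL_2$. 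Be careful that you reuse the symbol $T'$ for two different subtori (first $(\ker\alpha)^\circ$, then its complement to a $\GL_1$), and that your final remark about splitting $X_*(T)\twoheadrightarrow X_*(T)/(\text{saturation of }\Z\alpha^\vee)$ only yields a central $T'$ if you choose the splitting inside $\ker(\alpha:X_*(T)\to\Z)$---working inside $\Za^\circ$ from the start, as the paper does, avoids this caveat.
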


\begin{proof}
On note $\Za^\circ$ la composante neutre de $\Za$ et $\Ga^\der$ le groupe dérivé de $\Ga$. On utilise la suite exacte courte naturelle (voir \cite[Partie II, § 1.18]{Jan})
\begin{equation*}
1 \to \Za^\circ \cap \Ga^\der \to \Za^\circ \times \Ga^\der \to \Ga \to 1.
\end{equation*}
Comme $\Ga^\der$ est semi-simple de rang $1$, il est isomorphe à $\PGL_2$ ou $\SL_2$.
Si $\Ga^\der=\PGL_2$, alors $\Za^\circ \cap \Ga^\der = 1$, d'où $\Ga \cong \Za^\circ \times \PGL_2$.
On suppose $\Ga^\der=\SL_2$, donc $\Za^\circ \cap \Ga^\der$ est isomorphe à $1$ ou $\mu_2$.
Si $\Za^\circ \cap \Ga^\der=1$, alors $\Ga \cong \Za^\circ \times \SL_2$.
Sinon $\Za^\circ \cap \Ga^\der=\mu_2$ et comme $\Za^\circ$ est un tore, il existe un sous-tore $T' \subset \Za^\circ$ et un isomorphisme $\Za^\circ \cong T' \times \GL_1$ à travers lequel l'inclusion $\Za^\circ \cap \Ga^\der \subset \Za^\circ$ s'identifie à la composée $\mu_2 \hookrightarrow \GL_1 \hookrightarrow T' \times \GL_1$, d'où un isomorphisme $\Ga \cong T' \times (\GL_1 \times \SL_2)/\mu_2 \cong T' \times \GL_2$.
\end{proof}

On déduit du lemme \ref{lemm:Ga} le résultat suivant, où l'on identifie certains caractères \og irréguliers \fg.

\begin{lemm} \label{lemm:irreg}
Soient $\chi$ et $\eta$ des caractères de $T(\Qp)$ et $\Qp^\times$ respectivement à valeurs dans le groupe des unités d'un anneau quelconque.
Si $s_\alpha(\chi) \cdot (\eta \circ \alpha) \neq \chi$, alors $\chi \circ \alpha^\vee \neq \eta$.
La réciproque est vraie sauf dans le cas suivant : $\Ga'=\SL_2$ et $\chi \circ \alpha^\vee = \epsilon \eta$ avec $\epsilon : \Qp^\times \to \{\pm1\}$ un morphisme de groupes non trivial.
\end{lemm}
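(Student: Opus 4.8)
The statement to prove is Lemme \ref{lemm:irreg}, which relates the condition $s_\alpha(\chi) \cdot (\eta \circ \alpha) \neq \chi$ on characters of $T(\Qp)$ to the condition $\chi \circ \alpha^\vee \neq \eta$ on characters of $\Qp^\times$, with a precise exceptional case when $\Ga' = \SL_2$. The key observation is that, via the coroot $\alpha^\vee : \GL_1 \to T$, the character $s_\alpha(\chi) \cdot (\eta \circ \alpha)$ differs from $\chi$ exactly by a character that is "supported on the $\SL_2$-direction". Concretely, I would first compute, for $t$ in the image of $\alpha^\vee$, that $(s_\alpha(\chi) \cdot (\eta \circ \alpha))/\chi$ evaluated at $\alpha^\vee(x)$ equals $(\eta/(\chi \circ \alpha^\vee))(x)^{?}$ — here one uses $\langle \alpha, \alpha^\vee \rangle = 2$ and $s_\alpha(\alpha^\vee) = -\alpha^\vee$, so $s_\alpha(\chi)(\alpha^\vee(x)) = \chi(\alpha^\vee(x))^{-1}$ and $(\eta \circ \alpha)(\alpha^\vee(x)) = \eta(x)^2$. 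Thus the restriction to $\im \alpha^\vee$ of $(s_\alpha(\chi) \cdot (\eta\circ\alpha)) \cdot \chi^{-1}$ is the character $x \mapsto (\eta(x)/\chi(\alpha^\vee(x)))^2$, i.e.\ the square of $\eta \cdot (\chi \circ \alpha^\vee)^{-1}$.

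The forward implication is then immediate by contraposition: if $\chi \circ \alpha^\vee = \eta$, then the displayed restriction is trivial on $\im \alpha^\vee$, and I need to upgrade this to $s_\alpha(\chi) \cdot (\eta \circ \alpha) = \chi$ on all of $T(\Qp)$. Here I would invoke Lemme \ref{lemm:Ga}: writing $\Ga \cong T' \times \Ga'$, the character $s_\alpha(\chi) \cdot (\eta \circ \alpha) \cdot \chi^{-1}$ is visibly trivial on $T'(\Qp)$ (since $s_\alpha$ acts trivially there and $\alpha$ vanishes on it), so it is determined by its restriction to the maximal torus of $\Ga'(\Qp)$; and in each of the three cases $\SL_2, \GL_2, \PGL_2$ one checks that a character of that torus trivial on $\im \alpha^\vee$ and fixed by $s_\alpha$ up to the twist is already trivial — this uses that $s_\alpha$ acts by inversion on $\im\alpha^\vee$ and that $\alpha, \alpha^\vee$ generate enough of the character/cocharacter lattice. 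Actually it is cleaner to argue directly: the character $\psi \dfn (s_\alpha(\chi)\cdot(\eta\circ\alpha))\cdot\chi^{-1}$ on $T(\Qp)$ satisfies $s_\alpha(\psi) = \psi^{-1}$ and is trivial on $T'(\Qp)$; on $T_{\Ga'}(\Qp)$ it factors through a quotient on which $\alpha^\vee$ has finite cokernel, so triviality on $\im\alpha^\vee$ forces $\psi$ to have finite order, and combined with $s_\alpha\psi=\psi^{-1}$ one concludes $\psi$ is trivial unless a $2$-torsion obstruction survives.

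For the converse direction and the exceptional case, the point is exactly that $2$-torsion obstruction. If $s_\alpha(\chi)\cdot(\eta\circ\alpha) \neq \chi$ but $\chi\circ\alpha^\vee = \eta$ is nevertheless ruled out generically, the failure occurs precisely when $\eta\cdot(\chi\circ\alpha^\vee)^{-1}$ is a nontrivial character whose square is trivial — i.e.\ a nontrivial morphism $\epsilon : \Qp^\times \to \{\pm 1\}$ — and when the map $\im\alpha^\vee \hookrightarrow T_{\Ga'}$ is such that this sign character on $\Qp^\times \cong \im\alpha^\vee$ does extend to a nontrivial character of $T_{\Ga'}(\Qp)$ fixed-up-to-twist. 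Going through Lemme \ref{lemm:Ga}: for $\Ga' = \PGL_2$ the cocharacter $\alpha^\vee$ generates the full cocharacter lattice, so no obstruction; for $\Ga' = \GL_2$ one has $T_{\GL_2} \cong \GL_1^2$ and $\alpha^\vee(x) = (x, x^{-1})$, and a character $(a,b)\mapsto \mu_1(a)\mu_2(b)$ with $s_\alpha$ swapping the factors and trivial on $\im\alpha^\vee$ forces $\mu_1 = \mu_2^{-1}$ and $\mu_1\mu_2^{-1} = 1$, hence $\mu_1 = \mu_2$ has order dividing $2$ and being $1$ on the diagonal-antidiagonal... one checks it must be trivial — so again no obstruction; whereas for $\Ga' = \SL_2$, $T_{\SL_2}(\Qp) \cong \Qp^\times$ via $\alpha^\vee$ up to index $2$ (the image $\{x^2\}$... actually $\alpha^\vee$ is an isomorphism $\GL_1 \iso T_{\SL_2}$ but the relevant twist involves $\alpha = 2\cdot(\text{generator of }X^*(T_{\SL_2}))$), so a sign character can genuinely survive, producing the stated exception. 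The main obstacle, and the part requiring care, is precisely this case-by-case bookkeeping of how $\alpha^\vee$ sits inside the cocharacter lattice of $\Ga'$ and why only $\SL_2$ admits the surviving $2$-torsion twist; everything else is a short computation with the relation $\langle\alpha,\alpha^\vee\rangle = 2$.
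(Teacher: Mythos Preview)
Your overall approach --- reduce via Lemme~\ref{lemm:Ga} to the trichotomy $\Ga' \in \{\SL_2, \GL_2, \PGL_2\}$ and then argue case by case --- is exactly what the paper has in mind (it gives no detail beyond ``on d\'eduit du lemme~\ref{lemm:Ga}''). However, the execution contains two concrete errors that would make the argument fail as written.

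First, for $\Ga' = \PGL_2$ you claim that $\alpha^\vee$ generates the cocharacter lattice. This is false: in $\PGL_2$ one has $X_*(T_{\PGL_2}) \cong \Z$ with $\alpha^\vee$ corresponding to $2$ (it is $\alpha$, not $\alpha^\vee$, that is primitive). Second, for $\Ga' = \GL_2$ your abstract argument --- using only that $\psi \dfn s_\alpha(\chi)\cdot(\eta\circ\alpha)\cdot\chi^{-1}$ satisfies $s_\alpha(\psi) = \psi^{-1}$, is trivial on $T'(\Qp)$, and is trivial on $\im\alpha^\vee$ --- does \emph{not} force $\psi = 1$: the character $\mu \otimes \mu^{-1}$ of $T_2(\Qp)$ with $\mu$ a nontrivial quadratic character satisfies all three properties yet is nontrivial. (Relatedly, your claim that $\alpha^\vee$ has finite cokernel in $T_{\Ga'}$ is false for $\GL_2$, where the cokernel is $\GL_1$ via $\detfr$.)

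The clean fix is to compute $\psi$ on all of $T(\Qp)$ rather than just on $\im\alpha^\vee$. The reflection formula $s_\alpha(t) = t \cdot \alpha^\vee(\alpha(t))^{-1}$ gives immediately
\[
\psi(t) \;=\; \bigl(\eta \cdot (\chi \circ \alpha^\vee)^{-1}\bigr)\bigl(\alpha(t)\bigr)
\]
for every $t \in T(\Qp)$. Writing $\mu = \eta \cdot (\chi \circ \alpha^\vee)^{-1}$, the forward implication ($\mu = 1 \Rightarrow \psi = 1$) is then tautological, and the converse reduces to whether $\alpha(T(\Qp)) = \Qp^\times$. By Lemme~\ref{lemm:Ga} this holds when $\Ga' = \GL_2$ or $\PGL_2$ (where $\alpha$ is primitive in $X^*(T \cap \Ga')$), but fails exactly when $\Ga' = \SL_2$ (where $\alpha$ is twice a generator, so $\alpha(T(\Qp)) = (\Qp^\times)^2$), giving the stated exception $\mu^2 = 1$ with $\mu \neq 1$.
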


\begin{rema} \label{rema:irregmod2}
Il n'existe pas de caractère irrégulier modulo $2$ : si $p=2$ et $\chi : T(\Qp) \to \ke^\times$ est un caractère lisse, alors $s_\alpha(\chi)=\chi$ si et seulement si $\chi \circ \alpha^\vee = 1$.
\end{rema}

À présent, on prouve l'existence d'extensions non scindées entre certaines séries principales continues unitaires de $\Ga(\Qp)$ sur $E$. Le résultat analogue modulo $p$ se démontre de façon analogue.
Le caractère de $\Za(\Qp)$ en indice des $\Ext^1$ signifie que l'on se restreint aux catégories de représentations sur lesquelles $\Za(\Qp)$ agit à travers ce caractère.

\begin{lemm} \label{lemm:exta}
Soit $\chi : T(\Qp) \to \Oe^\times \subset E^\times$ un caractère continu unitaire. Si $\chi \circ \alpha^\vee \neq \varepsilon^{-1}$, alors l'injection $E$-linéaire
\begin{multline*}
\Ext^1_{T(\Qp),\chi_{|\Za(\Qp)}} \left( s_\alpha(\chi) \cdot (\epsa),\chi \right) \\
\hookrightarrow \Ext^1_{\Ga(\Qp),\chi_{|\Za(\Qp)}} \left( \Inda s_\alpha(\chi) \cdot (\epsa),\Inda \chi \right)
\end{multline*}
induite par le foncteur $\Inda$ n'est pas surjective.
\end{lemm}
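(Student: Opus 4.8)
Posons $\chi' = s_\alpha(\chi) \cdot (\epsa)$. Comme $\dot s_\alpha \in \Ga(\Qp)$ et que $\alpha$ est trivial sur le centre $\Za$ de $\Ga$, on a $\chi'_{|\Za(\Qp)} = \chi_{|\Za(\Qp)}$, de sorte que $\Inda \chi'$ et $\Inda \chi$ sont bien deux objets de la sous-catégorie considérée. D'après \eqref{isoOrda} et \eqref{isoHOrda} (dans leurs versions continues unitaires, voir \cite{JH}), on a $\OrdQp[\Ba]\bigl(\Inda \chi\bigr) \cong \chi$ et $\HOrdQp[\Ba]\bigl(\Inda \chi\bigr) \cong \chi'$. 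L'idée est d'utiliser la suite exacte à cinq termes, prolongeant \eqref{SEextP}, issue de la suite spectrale
\begin{equation*}
\Ext^i_{T(\Qp)} \bigl( \chi', \Hc[j]\!\OrdQp[\Ba](-) \bigr) \Rightarrow \Ext^{i+j}_{\Ga(\Qp)} \bigl( \Inda \chi', - \bigr)
\end{equation*}
(la même que celle invoquée pour le théorème \ref{theo:2}, ici dans le cadre continu unitaire --- ou bien modulo $p^k$ puis par dévissage comme dans \cite{JH}), restreinte aux sous-catégories sur lesquelles $\Za(\Qp)$ agit à travers $\chi_{|\Za(\Qp)}$ : elle s'écrit
\begin{multline*}
0 \to \Ext^1_{T(\Qp),\chi_{|\Za(\Qp)}}(\chi',\chi) \to \Ext^1_{\Ga(\Qp),\chi_{|\Za(\Qp)}}(\Inda \chi',\Inda \chi) \\
\to \Hom_{T(\Qp)}(\chi',\chi') \overset{d_2}{\longrightarrow} \Ext^2_{T(\Qp),\chi_{|\Za(\Qp)}}(\chi',\chi),
\end{multline*}
la première flèche non triviale étant l'injection de l'énoncé. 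Comme $\chi'$ est un caractère, $\Hom_{T(\Qp)}(\chi',\chi') \cong E$ et le conoyau de cette injection s'identifie à $\ker d_2$ ; il suffit donc de montrer que $d_2 = 0$.

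Comme $\chi'_{|\Za(\Qp)} = \chi_{|\Za(\Qp)}$, le caractère $\chi\chi'^{-1}$ se factorise à travers $T(\Qp)/\Za(\Qp)$, qui est un sous-groupe ouvert d'indice fini de $(T/\Za)(\Qp)$ ; d'où $\Ext^\bullet_{T(\Qp),\chi_{|\Za(\Qp)}}(\chi',\chi) \cong \Hc[\bullet]\bigl(T(\Qp)/\Za(\Qp),\chi\chi'^{-1}\bigr)$ (cohomologie continue). D'après le lemme \ref{lemm:Ga}, $T/\Za \cong \GL_1$, donc $T(\Qp)/\Za(\Qp)$ est du type $\Z \times (\text{groupe fini}) \times \Zp$, et un calcul direct utilisant le fait que $c-1$ est inversible dans $E$ pour tout $c \in E^\times$ distinct de $1$ montre que $\Hc[i]\bigl(T(\Qp)/\Za(\Qp),\psi\bigr) = 0$ pour tout $i \geq 0$ dès que $\psi$ est un caractère continu non trivial. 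Or le lemme \ref{lemm:irreg} appliqué avec $\eta = \varepsilon^{-1}$ montre que l'hypothèse $\chi \circ \alpha^\vee \neq \varepsilon^{-1}$ entraîne $\chi' \neq \chi$, c'est-à-dire $\chi\chi'^{-1} \neq 1$, \emph{sauf} dans le cas exceptionnel où $\Ga' \cong \SL_2$ et $\chi \circ \alpha^\vee = \epsilon\varepsilon^{-1}$ avec $\epsilon : \Qp^\times \to \{\pm 1\}$ un morphisme non trivial, auquel cas $\chi' = \chi$. Hors de ce cas exceptionnel, on a donc $\Ext^1_{T(\Qp),\chi_{|\Za(\Qp)}}(\chi',\chi) = \Ext^2_{T(\Qp),\chi_{|\Za(\Qp)}}(\chi',\chi) = 0$ : ainsi $d_2 = 0$ (son but est nul), la suite exacte fournit $\Ext^1_{\Ga(\Qp),\chi_{|\Za(\Qp)}}(\Inda \chi',\Inda \chi) \cong E$, et la source de l'injection étant nulle, celle-ci n'est pas surjective.

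Il reste le cas exceptionnel ($\Ga' \cong \SL_2$, $\chi \circ \alpha^\vee = \epsilon\varepsilon^{-1}$), où $\chi' = \chi$ : l'énoncé devient une assertion d'auto-extension non générique, et l'on doit montrer que $d_2 = 0$ bien que son but $\Ext^2_{T(\Qp),\chi_{|\Za(\Qp)}}(\chi,\chi) \cong \Hc[2](\Qp^\times,E)$ soit de dimension $1$. De façon équivalente, il faut exhiber une extension non scindée $0 \to \Inda \chi \to \Ea \to \Inda \chi \to 0$ dont l'image par le morphisme $\partial$ de \eqref{SEextP} soit non nulle : on obtient ainsi l'auto-extension \og irrégulière \fg{} à l'origine du phénomène nouveau signalé dans l'introduction. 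On la construirait à la main en se ramenant via le lemme \ref{lemm:Ga} à $\SL_2(\Qp)$, où $\Inda \chi$ correspond à $\IndQpSLd(\epsilon\varepsilon^{-1})$, en utilisant le modèle explicite sur $\mathbb{P}^1(\Qp)$ et le caractère quadratique $\epsilon$ (de façon équivalente, en montrant que le complexe dérivé $\mathrm{R}\OrdQp[\Ba]\bigl(\Inda \chi\bigr)$ est formel, ce qui force $d_2 = 0$). C'est précisément ce point --- concilier la non-nullité du $\Ext^2$ sur le tore avec l'annulation de $d_2$ --- qui constitue la principale difficulté ; tout le reste est formel une fois en place les analogues continus unitaires de \eqref{isoOrda}, \eqref{isoHOrda} et de la suite exacte à cinq termes, lesquels relèvent des techniques de \cite{JH}.
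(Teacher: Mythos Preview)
Your overall strategy is natural, but it has two genuine gaps, and the paper's proof takes a different route that sidesteps both.

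\textbf{First gap.} The spectral sequence you write down, with $\Hc[j]\!\OrdQp[\Ba]$ in the $E_2$-page, is not known to exist: the Grothendieck spectral sequence for the adjunction $(\Inda,\OrdQp[\Ba])$ involves the right derived functors $\mathrm{R}^j\OrdQp[\Ba]$, and the identification $\mathrm{R}^j\OrdQp[\Ba] \cong \Hc[j]\!\OrdQp[\Ba]$ is precisely \cite[Conjecture~3.7.2]{Em2}. This conjecture is established for $\GL_2$ (see \cite{EmP}), but not for $\SL_2$ or $\PGL_2$ directly. So even in your ``non-exceptional'' case, the middle term of the five-term sequence is a~priori only $\Hom_{T(\Qp)}\bigl(\chi',\mathrm{R}^1\OrdQp[\Ba](\Inda\chi)\bigr)$, which you cannot identify with $E$ without the conjecture; hence you cannot conclude that $\Ext^1_{\Ga(\Qp),\chi_{|\Za(\Qp)}}(\Inda\chi',\Inda\chi)\neq 0$.

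\textbf{Second gap.} In the exceptional case ($\Ga'\cong\SL_2$ and $\chi'=\chi$) you acknowledge that the extension remains to be constructed and that this is ``la principale difficulté''. As written, this case is simply not proved.

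\textbf{How the paper proceeds.} The paper uses Lemma~\ref{lemm:Ga} to factor $\Ga\cong T'\times\Ga'$ and reduce to $\Ga'\in\{\GL_2,\SL_2,\PGL_2\}$. It then treats $\GL_2$ first: there the conjecture \emph{is} known, and (since $\chi'\neq\chi$ on $T_2(\Qp)$ by Lemma~\ref{lemm:irreg}) one obtains by d\'evissage a non-split extension $\E_2$ with central character and one-dimensional ordinary part. For $\SL_2$ one extends $\chi$ arbitrarily to a character of $T_2(\Qp)$, applies the $\GL_2$ result, and \emph{restricts} $\E_2$ to $\SL_2(\Qp)$; the restriction is not parabolically induced because its ordinary part (which coincides with that of $\E_2$) is one-dimensional, whereas an induced extension of characters would have two-dimensional ordinary part. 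This argument works uniformly, including in your exceptional case where the two characters coincide on $(T_2\cap\SL_2)(\Qp)$ --- the point being that on $\GL_2$ they are always distinct. For $\PGL_2$ one views $\chi$ as a character of $T_2(\Qp)$ trivial on the center and uses inflation from $\GL_2$. Thus the paper never needs the spectral sequence (or the conjecture) beyond $\GL_2$, and the exceptional case is handled for free by the transfer.
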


\begin{proof}
On suppose $\chi \circ \alpha^\vee \neq \varepsilon^{-1}$.
En utilisant le lemme \ref{lemm:Ga} et en notant $\chi'$ et $\chi'_\alpha$ les restrictions de $\chi$ à $T'(\Qp)$ et $(T \cap \Ga')(\Qp)$ respectivement, on voit que le produit tensoriel avec $\chi'$ sur $E$ induit des isomorphismes $E$-linéaires
\begin{multline*}
\Ext^1_{(T \cap \Ga')(\Qp),\chi_{|(\Za \cap \Ga')(\Qp)}} \left( s_\alpha(\chi'_\alpha) \cdot (\epsa),\chi'_\alpha \right) \\
\iso \Ext^1_{T(\Qp),\chi_{|\Za(\Qp)}} \left( s_\alpha(\chi) \cdot (\epsa),\chi \right)
\end{multline*}
et
\begin{multline*}
\Ext^1_{\Ga'(\Qp),\chi_{|(\Za \cap \Ga')(\Qp)}} \left( \IndQpH s_\alpha(\chi'_\alpha) \cdot (\epsa),\IndQpH \chi'_\alpha \right) \\
\iso \Ext^1_{\Ga(\Qp),\chi_{|\Za(\Qp)}} \left( \Inda s_\alpha(\chi) \cdot (\epsa),\Inda \chi \right)
\end{multline*}
compatibles avec les injections $E$-linéaires induites par les foncteurs $\Inda$ et $\Indaprime$, donc il suffit de montrer le résultat lorsque $\Ga=\Ga'$.

On suppose $\Ga=\GL_2$ et on note $T_2 \subset \GL_2$ le sous-groupe des matrices diagonales, $B_2^- \subset \GL_2$ le sous-groupe des matrices triangulaires inférieures et $Z_2$ le centre de $\GL_2$. On a $s_\alpha(\chi) \cdot (\epsa) \neq \chi$ d'après le lemme \ref{lemm:irreg}, donc $\Ext_{T(\Qp),\chi_{|\Za(\Qp)}}^1(s_\alpha(\chi) \cdot (\epsa),\chi)=0$ d'après \cite[Proposition 5.1.6]{JH}. De plus \cite[Conjecture 3.7.2]{Em2} est vraie (voir \cite{EmP}), ce qui permet en procédant par réduction modulo $\pe^k$ et dévissage comme dans la preuve de \cite[Proposition B.2 (i)]{BH} (mais en se restreignant aux représentations à caractère central, voir \cite[§ 7.1]{Pas}) de montrer l'existence d'une extension non scindée à caractère central
\begin{equation*}
0 \to \IndQpGLd \chi \to \E_2 \to \IndQpGLd s_\alpha(\chi) \cdot (\epsa) \to 0.
\end{equation*}

On suppose $\Ga=\SL_2$ et on prolonge arbitrairement $\chi$ en un caractère continu unitaire de $T_2(\Qp)$. Par ce qui précède, il existe une extension non scindée à caractère central $\E_2$ entre les séries principales de $\GL_2(\Qp)$ correspondantes. Par restriction, on obtient une extension à caractère central de $\IndQpSLd s_\alpha(\chi) \cdot (\epsa)$ par $\IndQpSLd \chi$, qui n'est pas induite à partir d'une extension de $s_\alpha(\chi) \cdot (\epsa)$ par $\chi$ car ses parties ordinaires, dont la restriction à $(T_2 \cap \SL_2)(\Qp)$ est naturellement isomorphe aux parties ordinaires de $\E_2$, sont de dimension $1$ sur $E$.

On suppose $\Ga=\PGL_2$ et on identifie $\chi$ à un caractère de $T_2(\Qp)$ trivial sur $Z_2(\Qp)$. On a $s_\alpha(\chi) \cdot (\epsa) \neq \chi$ d'après le lemme \ref{lemm:irreg}, donc $\Ext^1_{(T_2/Z_2)(\Qp)}(s_\alpha(\chi) \cdot (\epsa),\chi)=0$ d'après \cite[Proposition 5.1.6]{JH}. Par ce qui précède, il existe une extension non scindée à caractère central $\E_2$ entre les séries principales de $\GL_2(\Qp)$ correspondantes. Par inflation, on obtient une extension non scindée de $\IndQpPGLd s_\alpha(\chi) \cdot (\epsa)$ par $\IndQpPGLd \chi$.
\end{proof}

On prouve également l'existence d'une auto-extension supplémentaire d'une série principale lisse réductible de $\Ga(\Qp)$ sur $\ke$ lorsque $p=2$.

\begin{lemm} \label{lemm:extamod2}
Soit $\chi : T(\Qp) \to \ke^\times$ un caractère lisse. Si $\chi \circ \alpha^\vee=1$ et $p=2$, alors l'injection $\ke$-linéaire
\begin{equation*}
\Ext^1_{T(\Qp),\chi_{|\Za(\Qp)}} \left( \chi,\chi \right) \hookrightarrow \Ext^1_{\Ga(\Qp),\chi_{|\Za(\Qp)}} \left( \Inda \chi,\Inda \chi \right)
\end{equation*}
induite par le foncteur $\Inda$ n'est pas surjective.
\end{lemm}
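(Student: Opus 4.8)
The proof follows the pattern of Lemma \ref{lemm:exta}. First one reduces to the case $\Ga=\Ga'$: by Lemma \ref{lemm:Ga} write $\Ga\cong T'\times\Ga'$ with $\Ga'\in\{\SL_2,\GL_2,\PGL_2\}$; since $s_\alpha$ acts trivially on $T'$, tensoring over $\ke$ with the restriction $\chi'$ of $\chi$ to $T'(\Qp)$ gives $\ke$-linear isomorphisms between the two $\Ext^1$-groups of the statement and the corresponding groups over $\Ga'$, compatible with the injections induced by $\Inda$ and $\IndQpH$. One then reduces to $\chi=\un$: the hypothesis $\chi\circ\alpha^\vee=1$ says that $\chi$ is trivial on $\im\alpha^\vee$, so for $\Ga'=\SL_2$ — where $\alpha^\vee$ is an isomorphism onto $T\cap\Ga'$ — this forces $\chi=\un$ at once, for $\Ga'=\PGL_2$ — where $\alpha^\vee$ becomes the squaring cocharacter and $\ke^\times$ has no nontrivial $2$-torsion — it again forces $\chi=\un$, and for $\Ga'=\GL_2$ one has $\chi=\eta\circ\det$ for a smooth character $\eta$ and one twists everything by $\eta^{-1}\circ\det$. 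It therefore suffices to produce, for $\Ga\in\{\SL_2,\GL_2,\PGL_2\}$, a non-split self-extension of the (reducible) smooth principal series $\Inda\un$ over $\ke$, with trivial central character when $\Ga=\GL_2$, which is not induced from an extension of the trivial character of $T(\Qp)$.

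The heart of the proof is the case $\Ga=\GL_2$, and I would treat it with derived ordinary parts. Since $p=2$ the character $\omega$ is trivial, so Corollary \ref{coro:h1ord}(i) applied to $(\GL_2,B_2,T_2)$ with $\sigma=\St$ gives $\HOrdQp[B_2](\IndQpGLd\un)\cong\un$ as a $T_2(\Qp)$-representation. By the exact sequence \eqref{SEextP} for $(\GL_2,B_2,T_2)$, restricted to representations with trivial central character, the image of the injection $\Ext^1_{T_2(\Qp)}(\un,\un)\hookrightarrow\Ext^1_{\GL_2(\Qp)}(\IndQpGLd\un,\IndQpGLd\un)$ induced by $\IndQpGLd$ is exactly the kernel of the connecting map into $\Hom_{T_2(\Qp)}(\un,\HOrdQp[B_2](\IndQpGLd\un))\cong\ke$; hence it is enough to show that this connecting map is nonzero. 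I would obtain this from the full five-term exact sequence attached to the adjoint pair $(\IndQpGLd,\OrdQp[B_2])$ on smooth representations with trivial central character: it reduces the non-vanishing to the vanishing of the boundary map $\ke\to\Ext^2_{T_2(\Qp)}(\un,\un)$ (with central-character constraint), which I expect to verify by a direct homological computation on the torus. Alternatively one may exhibit the required extension explicitly, exploiting the reducibility of $\IndQpGLd\un$ over $\ke$ and the coincidences forced by $\omega=1$, or invoke the known computations of mod $p$ extension groups for $\GL_2(\Qp)$.

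The cases $\Ga=\SL_2$ and $\Ga=\PGL_2$ then follow from the $\GL_2$ case exactly as in the proof of Lemma \ref{lemm:exta}. Let $\E_2$ be the $\GL_2$-extension just constructed. For $\SL_2$ one restricts $\E_2$ to $\SL_2(\Qp)$: the resulting extension of $\IndQpSLd\un$ by itself is not induced from an extension of the trivial character of $(T_2\cap\SL_2)(\Qp)$, because the natural restriction to $(T_2\cap\SL_2)(\Qp)$ identifies its derived ordinary parts, and the corresponding connecting map, with those of $\E_2$, which are nonzero by the previous step. For $\PGL_2$ one views $\E_2$ — which has trivial central character — as an extension of $\PGL_2(\Qp)$-representations via $\PGL_2(\Qp)=\GL_2(\Qp)/Z_2(\Qp)$; since this identification is exact and commutes with parabolic induction, the extension of $\IndQpPGLd\un$ by itself so obtained is still non-split and non-induced.

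The main obstacle is the $\GL_2(\Qp)$ step: the non-vanishing of the connecting map into $\HOrdQp[B_2](\IndQpGLd\un)$ is precisely the $p=2$ phenomenon (for $p$ odd the analogous connecting map vanishes and the cokernel is $0$), and it cannot be obtained via the ``non-split implies non-induced'' shortcut used in Lemma \ref{lemm:exta}, since here $\Ext^1_{T_2(\Qp)}(\un,\un)\neq 0$. So one really needs either the five-term-sequence computation or an explicit construction of the extra extension.
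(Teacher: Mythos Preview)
Your proposal correctly sets up the reduction and identifies the crux, but the $\GL_2(\Qp)$ step is genuinely incomplete: you reduce the claim to the vanishing of the boundary map $\ke\to\Ext^2_{T_2(\Qp)}(\un,\un)$ (with central-character constraint) in the five-term sequence, and then only ``expect'' this. Since by exactness that vanishing is \emph{equivalent} to the non-surjectivity you want (the Hom term being one-dimensional), you have restated the problem rather than solved it; and appealing to Theorem~\ref{theo:autoextmodp}(ii) here would be circular, as that result invokes the present lemma.

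The paper bypasses the $\Ext^2$ issue via the second alternative you mention. Because $\chi\circ\alpha^\vee=1$, the character $\chi$ embeds in $\Inda\chi$; following the argument of \cite[Proposition~4.3.15(4)]{Em2}, the functor $\Inda$ together with this embedding yield a \emph{splitting}
\[
\Ext^1_{\Ga(\Qp),\chi_{|\Za(\Qp)}}\bigl(\Inda\chi,\Inda\chi\bigr)\;\cong\;\Ext^1_{T(\Qp),\chi_{|\Za(\Qp)}}(\chi,\chi)\;\oplus\;\Ext^1_{\Ga(\Qp),\chi_{|\Za(\Qp)}}\bigl(\Inda\chi,\chi\bigr),
\]
the first summand being exactly the image of $\Inda$. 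It therefore suffices to show the second summand is nonzero. The paper actually proves more: for any $p$ and any $\chi$ with $\chi\circ\alpha^\vee=1$ one has $\Ext^1_{\Ga(\Qp),\chi_{|\Za(\Qp)}}(\Inda\chi\cdot(\oma),\chi)\neq 0$, by reducing to $\GL_2$ as in Lemma~\ref{lemm:exta} and invoking \cite[Proposition~4.3.13(2)]{Em2}; specialising to $p=2$ gives $\oma=1$ and the claim follows. No case split over $\SL_2$, $\GL_2$, $\PGL_2$ and no $\Ext^2$ computation is needed.
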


\begin{proof}
En supposant seulement $\chi \circ \alpha^\vee=1$ et en procédant comme dans la preuve du lemme \ref{lemm:exta} (c'est-à-dire en se ramenant au cas $\Ga=\GL_2$, puis en utilisant \cite[Proposition 4.3.13 (2)]{Em2} mais en se restreignant aux représentations à caractère central), on montre que
\begin{equation} \label{extaSpa}
\Ext^1_{\Ga(\Qp),\chi_{|\Za(\Qp)}} \left( \Inda \chi \cdot (\oma), \chi \right) \neq 0.
\end{equation}
On suppose $\chi \circ \alpha^\vee=1$ et $p=2$.
En procédant comme dans la preuve de \cite[Proposition 4.3.15 (4)]{Em2}, on voit que le foncteur $\Inda$ et l'injection $\ke$-linéaire $\chi \hookrightarrow \Inda \chi$ induisent un isomorphisme $\ke$-linéaire
\begin{multline*}
\Ext^1_{T(\Qp),\chi_{|\Za(\Qp)}} \left( \chi,\chi \right) \oplus \Ext^1_{\Ga(\Qp),\chi_{|\Za(\Qp)}} \left( \Inda \chi,\chi \right) \\
\iso \Ext^1_{\Ga(\Qp),\chi_{|\Za(\Qp)}} \left( \Inda \chi,\Inda \chi \right).
\end{multline*}
En utilisant l'inégalité \eqref{extaSpa} avec $\oma=1$, on obtient le résultat.
\end{proof}

Enfin, on prouve un dernier résultat sur certaines auto-extensions modulo $p$ de longueur $2$.
On calcule les $\Ext^2$ dans les catégories de représentations lisses localement admissibles sur $\ke$ sur lesquelles $\Za(\Qp)$ agit à travers le caractère en indice.

\begin{lemm} \label{lemm:exta2}
On suppose $\Za$ connexe et $p \neq 2$. Soit $\chi : T(\Qp) \to \ke^\times$ un caractère lisse. Si $\chi \circ \alpha^\vee = \omega^{-1}$, alors le morphisme $\ke$-linéaire
\begin{equation*}
\Ext^2_{T(\Qp),\chi_{|\Za(\Qp)}} \left( \chi,\chi \right) \to \Ext^2_{\Ga(\Qp),\chi_{|\Za(\Qp)}} \left( \Inda \chi,\Inda \chi \right)
\end{equation*}
induit par le foncteur $\Inda$ n'est pas injectif.
\end{lemm}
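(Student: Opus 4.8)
The plan is to reduce the statement to the case $\Ga=\GL_2$ and then, via the derived adjunction between $\Inda$ and $\OrdQp[\Ba]$, to an assertion about $\Ext^1$ for $\GL_2(\Qp)$.

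First, exactly as in the proof of Lemma~\ref{lemm:exta}, I would use Lemma~\ref{lemm:Ga} to write $\Ga\cong T'\times\Ga'$ with $\Ga'\in\{\SL_2,\GL_2,\PGL_2\}$; tensoring with the restriction of $\chi$ to $T'(\Qp)$ induces compatible Künneth isomorphisms on the $\Ext^\bullet$-groups appearing in the statement, so one reduces to $\Ga=\Ga'$. Since $\Za=Z(\Ga)$ is connected and $p\neq2$, the case $\Ga=\SL_2$ is impossible ($Z(\SL_2)=\mu_2$ is disconnected). If $\Ga=\PGL_2$, writing $\chi$ as the restriction to $T(\Qp)=(T_2/Z_2)(\Qp)$ of a character $\chi_1\otimes\chi_2$ of $T_2(\Qp)$ trivial on $Z_2(\Qp)$, one has $\chi\circ\alpha^\vee=\chi_1^2$; but $\omega^{-1}$ is not a square in the group of $\ke^\times$-valued characters of $\Qp^\times$ when $p\neq2$ (its restriction to $\mu_{p-1}$ is a character of order $p-1$, which is even, hence not a square), so the hypothesis $\chi\circ\alpha^\vee=\omega^{-1}$ cannot hold and the statement is vacuous. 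Thus I may assume $\Ga=\GL_2$ and $\chi=\chi_1\otimes\chi_2$ with $\chi_1/\chi_2=\omega^{-1}$; by Lemma~\ref{lemm:irreg} this also forces $s_\alpha(\chi)\cdot(\oma)=\chi$.

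Next, since $\OrdQp[\Ba]$ is right adjoint to the exact functor $\Inda$, it preserves injectives, so — working in the category of smooth locally admissible $\ke$-representations on which $\Za(\Qp)$ acts through $\chi_{|\Za(\Qp)}$ — there is a Grothendieck spectral sequence $\Ext^i_{T(\Qp)}(\chi,\Rc[j]\OrdQp[\Ba](\Inda\chi))\Rightarrow\Ext^{i+j}_{\Ga(\Qp)}(\Inda\chi,\Inda\chi)$. As the unipotent radical of $\Ba$ is one-dimensional, $\Rc[j]\OrdQp[\Ba]=0$ for $j\geq2$, so the spectral sequence degenerates into a long exact sequence, extending \eqref{SEextP} for the triple $(\Ga,\Ba,T)$; by \eqref{isoOrda}, \eqref{isoHOrda} and $s_\alpha(\chi)\cdot(\oma)=\chi$ its two surviving rows both equal $\Ext^\bullet_{T(\Qp)}(\chi,\chi)$, and the relevant portion reads
\[
\Ext^1_{\Ga(\Qp)}(\Inda\chi,\Inda\chi)\xrightarrow{r}\Hom_{T(\Qp)}(\chi,\chi)\xrightarrow{d_2}\Ext^2_{T(\Qp)}(\chi,\chi)\longrightarrow\Ext^2_{\Ga(\Qp)}(\Inda\chi,\Inda\chi),
\]
where $r$ is the second nontrivial map of \eqref{SEextP} and the last arrow is the map of the statement. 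Its kernel is $\im(d_2)$, and since $\Hom_{T(\Qp)}(\chi,\chi)$ is one-dimensional over $\ke$, the lemma is equivalent to $d_2\neq0$, i.e.\ to $r=0$, i.e.\ (using the injectivity in \eqref{SEextP}) to the statement that $\Inda$ induces an isomorphism $\Ext^1_{T(\Qp)}(\chi,\chi)\iso\Ext^1_{\Ga(\Qp)}(\Inda\chi,\Inda\chi)$.

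It then remains to establish this isomorphism for $\Ga=\GL_2$, with a fixed central character, when $\chi_1/\chi_2=\omega^{-1}$ and $p\neq2$. In that case $\IndQpGLd\chi$ is reducible of length~$2$ — a non-split extension between a smooth character of $\GL_2(\Qp)$ and a twist of its Steinberg representation — and one computes $\Ext^1_{\GL_2(\Qp)}(\IndQpGLd\chi,\IndQpGLd\chi)$ by dévissage along this filtration, using the known values of the $\Ext^1$-groups between such characters and Steinberg twists on $\ke$-representations with fixed central character (obtained by the reduction mod $\pe^k$ and dévissage technique of \cite{JH} applied to $\GL_2(\Qp)$, or from the explicit computations of \cite[§ 4.3]{Em2}, much as in the proofs of Lemmas~\ref{lemm:exta} and \ref{lemm:extamod2}); this gives $\dim_{\ke}\Ext^1_{\GL_2(\Qp)}(\IndQpGLd\chi,\IndQpGLd\chi)=\dim_{\ke}\Ext^1_{T(\Qp)}(\chi,\chi)$, and together with the injection of \eqref{SEextP} the desired isomorphism follows — hence $d_2\neq0$ and the non-injectivity of the statement. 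I expect this last step to be the main obstacle: it is exactly where one must rule out the potential ``extra'' self-extension of $\IndQpGLd\chi$ detected by the degree-one derived ordinary parts, and the only place where the hypothesis $p\neq2$ is genuinely used (for $p=2$, Lemma~\ref{lemm:extamod2} shows that such an extra self-extension does exist).
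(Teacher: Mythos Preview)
Your overall strategy matches the paper's: reduce to $\GL_2$ (the paper does this via a morphism $\GL_2\to\Ga$ that gives an equivalence of fixed-central-character categories; your Künneth-plus-case-elimination is an acceptable variant, and your observation that the $\PGL_2$ case is vacuous is correct), derive the five-term exact sequence \eqref{SEGrotha} (the paper cites \cite[§~7.1]{Pas}, your spectral-sequence argument is equivalent), and reduce the statement to the assertion that $\Inda$ induces an isomorphism on $\Ext^1$ with fixed central character.

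The genuine gap is in your final step. You claim that for $\chi_1/\chi_2=\omega^{-1}$ and $p\neq 2$ the representation $\IndQpGLd\chi$ is reducible of length~$2$ and propose a dévissage along that filtration. This is false: in characteristic $p$ the principal series $\IndQpGLd(\chi_1\otimes\chi_2)$ is reducible precisely when $\chi_1=\chi_2$ (Barthel--Livné, \cite[Théorème~30]{BL}); since $\omega^{-1}\neq 1$ for $p\neq 2$, the representation is \emph{irreducible}, and there is no length-$2$ filtration to dévisser along. You appear to be importing the reducibility criterion from the complex or $p$-adic Banach setting. The paper instead uses exactly this irreducibility together with \cite[Proposition~4.3.15~(3)]{Em2} (and \cite[Proposition~8.1]{PasExt} to pass to fixed central character), which says directly that for an irreducible mod~$p$ principal series of $\GL_2(\Qp)$ the map $\Ext^1_{T}(\chi,\chi)\to\Ext^1_{\GL_2}(\IndQpGLd\chi,\IndQpGLd\chi)$ is an isomorphism. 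Once you replace your dévissage by this citation, the proof is complete and essentially identical to the paper's.
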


\begin{proof}
Comme $\Za$ est connexe, on déduit du lemme \ref{lemm:Ga} que $\Ga$ est le produit direct d'un tore et de $\GL_2$ ou $\PGL_2$.
On en déduit un morphisme $\GL_2 \to \Ga$ qui induit un isomorphisme entre la catégorie des représentations lisses localement admissibles de $\Ga(\Qp)$ sur $\ke$ ayant pour caractère central $\chi_{|\Za(\Qp)}$ et la catégorie des représentations lisses localement admissibles de $\GL_2(\Qp)$ sur $\ke$ ayant pour caractère central la composée du morphisme induit $Z_2(\Qp) \to \Za(\Qp)$ et de $\chi_{|\Za(\Qp)}$, d'où une suite exacte de $\ke$-espaces vectoriels
\begin{multline} \label{SEGrotha}
0 \to \Ext^1_{T(\Qp),\chi_{|\Za(\Qp)}} \left( \chi,\chi \right) \to \Ext^1_{\Ga(\Qp),\chi_{|\Za(\Qp)}} \left( \Inda \chi,\Inda \chi \right) \\
\to \Hom_{T(\Qp)} \left( \chi,s_\alpha(\chi) \cdot (\oma) \right) \to \Ext^2_{T(\Qp),\chi_{|\Za(\Qp)}} \left( \chi,\chi \right) \\
\to \Ext^2_{\Ga(\Qp),\chi_{|\Za(\Qp)}} \left( \Inda \chi,\Inda \chi \right)
\end{multline}
où le premier morphisme non trivial et le dernier morphisme sont induits par le foncteur $\Inda$ (voir \cite[§ 7.1]{Pas}).

On suppose $\chi \circ \alpha^\vee = \omega^{-1}$. Comme $p \neq 2$ on a $\chi \circ \alpha^\vee \neq 1$, donc $\Inda \chi$ est irréductible d'après \cite[Théorème 30 (1) (a)]{BL}. De plus, le premier morphisme non trivial de la suite exacte \eqref{SEGrotha} est un isomorphisme d'après \cite[Proposition 4.3.15 (3)]{Em2} (pour tenir compte du caractère central, on utilise \cite[Proposition 8.1]{PasExt}).
Ainsi, le dernier morphisme de la suite exacte \eqref{SEGrotha} n'est pas injectif.
\end{proof}

\begin{rema} \label{rema:exta2irreg}
Le résultat est en fait vrai sans supposer $\Za$ connexe : pour $\Ga=\GL_2$ le noyau du dernier morphisme de la suite exacte \eqref{SEGrotha} est engendré par le produit de Yoneda de $\overline{\val} \circ \alpha$ et $\overline{\log_p} \circ \alpha$ vues comme des extensions à travers l'isomorphisme $\Ext^1_{T_2(\Qp),\chi_{|Z_2(\Qp)}}(\chi,\chi) \cong \Hc[1]((T_2/Z_2)(\Qp),\un)$ (voir \cite[Appendice A]{Em2}) et les restrictions à $(T_2 \cap \SL_2)(\Qp)$ de ces dernières sont linéairement indépendantes ; on peut en déduire que leur produit de Yoneda est non nul et dans le noyau du dernier morphisme de la suite exacte \eqref{SEGrotha} avec $\Ga=\SL_2$.
\end{rema}

\subsection{Extensions entre séries principales}

On détermine les extensions entre séries principales de $G(F)$.
Lorsque $F \neq \Qp$, ces extensions proviennent toujours d'une extension entre caractères de $T(F)$ (voir \cite[Théorème 1.2]{JH}). On suppose donc $F=\Qp$.

\begin{theo} \label{theo:ext}
Soit $\chi : T(\Qp) \to \Oe^\times \subset E^\times$ un caractère continu unitaire.
\begin{enumerate}[(i)]
\item Si $\chi' : T(\Qp) \to \Oe^\times \subset E^\times$ est un caractère continu unitaire distinct de $\chi$, alors
\begin{multline*}
\dim_E \Ext^1_{G(\Qp)} \left( \IndQp \chi',\IndQp \chi \right) \\
= \card \left\{ \alpha \in \Delta \mid \chi' = s_\alpha(\chi) \cdot (\epsa) \right\}.
\end{multline*}
\item Si $s_\alpha(\chi) \cdot (\epsa) \neq \chi$ pour tout $\alpha \in \Delta$, alors le foncteur $\IndQp$ induit un isomorphisme $E$-linéaire
\begin{equation*}
\Ext_{T(\Qp)}^1 \left( \chi, \chi \right) \iso \Ext_{G(\Qp)}^1 \left( \IndQp \chi,\IndQp \chi \right).
\end{equation*}
\end{enumerate}
\end{theo}

\begin{rema} \phantomsection \label{rema:ext}
\begin{enumerate}[(i)]
\item Lorsque le centre de $G$ est connexe, la dimension du point (i) du théorème est au plus $1$ (par une adaptation de \cite[Lemme 5.1.3]{JH}).
En général, cette dimension peut être arbitrairement grande (voir l'exemple \ref{exem:card} ci-dessous).
\item Sans l'hypothèse de généricité du point (ii), on montre que le foncteur $\IndQp$ induit une injection $E$-linéaire dont le conoyau est de dimension comprise entre
\begin{gather*}
\card \left\{ \alpha \in \Delta \mid s_\alpha(\chi) \cdot (\epsa) = \chi ~ \text{et} ~ \chi \circ \alpha^\vee \neq \varepsilon^{-1} \right\} \\
\text{et} ~ \card \left\{ \alpha \in \Delta \mid s_\alpha(\chi) \cdot (\epsa) = \chi \right\}.
\end{gather*}
On s'attend à ce que la minoration soit une égalité. On le prouve lorsque le centre de $G$ est connexe et $p \neq 2$ (voir le corollaire \ref{coro:autoext} ci-dessous), auquel cas le minorant est nul d'après le lemme \ref{lemm:irreg}.
De même pour son analogue modulo $p$, sauf dans certains cas exceptionnels et lorsque $p=2$ (voir la remarque \ref{rema:autoextmodp} ci-dessous).
\end{enumerate}
\end{rema}

\begin{exem} \label{exem:card}
Avec $n \in \Nbb$, $G \subset \GL_{2n}$ l'intersection des sous-groupes diagonaux par blocs $\GL_2^n$ et $\GL_2^k \times \SL_4 \times \GL_2^{n-2-k}$ pour tout $k \in \llbrack 0,n-2 \rrbrack$, $B \subset G$ le sous-groupe des matrices triangulaires supérieures, $T \subset B$ le sous-groupe des matrices diagonales et $\chi : T(\Qp) \to \Oe^\times \subset E^\times$ le caractère continu unitaire défini par
\begin{equation*}
\chi(\diag(t_1,\dots,t_{2n})) = (-1)^{\val(t_1t_3\dots t_{2n-1})} \varepsilon^{-1}(t_1^{2n-1}t_2^{2n-2}\dots t_{2n-1}),
\end{equation*}
on a $\card \Delta = n$ et $s_\alpha(\chi) \cdot (\epsa)$ est distinct de $\chi$ et indépendant de $\alpha \in \Delta$.
\end{exem}

\begin{proof}
Soient $\chi,\chi' : T(\Qp) \to A^\times$ des caractères lisses. En utilisant les isomorphismes \eqref{isoOrd} et \eqref{isoHOrd}, la suite exacte \eqref{SEextP} pour le triplet $(G,B,T)$ avec $U=\chi'$ et $V = \IndQp \chi$ devient
\begin{multline} \label{SEext}
0 \to \Ext^1_{T(\Qp)} \left( \chi',\chi \right) \to \Ext^1_{G(\Qp)} \left( \IndQp \chi',\IndQp \chi \right) \\
\to \bigoplus_{\alpha \in \Delta} \Hom_{T(\Qp)} \left( \chi',s_\alpha(\chi) \cdot (\oma) \right).
\end{multline}
De même pour tout $\alpha \in \Delta$, en utilisant les isomorphismes \eqref{isoOrda} et \eqref{isoHOrda}, la suite exacte \eqref{SEextP} pour le triplet $(\Ga,\Ba,T)$ avec $U=\chi'$ et $V = \Inda \chi$ devient
\begin{multline} \label{SEexta}
0 \to \Ext^1_{T(\Qp)} \left( \chi',\chi \right) \to \Ext^1_{\Ga(\Qp)} \left( \Inda \chi',\Inda \chi \right) \\
\to \Hom_{T(\Qp)} \left( \chi',s_\alpha(\chi) \cdot (\oma) \right).
\end{multline}

On suppose $A=\ke$ et on prouve la version modulo $p$ du théorème.
On pose
\begin{gather*}
\Delta' \dfn \left\{ \alpha \in \Delta \mid \chi' = s_\alpha(\chi) \cdot (\oma) \right\}, \\
\Delta'' \dfn \left\{ \alpha \in \Delta' \mid \chi \circ \alpha^\vee = \omega^{-1} \right\}.
\end{gather*}
En utilisant la suite exacte \eqref{SEext}, on voit que le conoyau du premier morphisme non trivial est de dimension au plus $\card \Delta'$.
Pour tout $\alpha \in \Delta' - \Delta''$, le premier morphisme non trivial de la suite exacte \eqref{SEexta} n'est pas surjectif d'après l'analogue modulo $p$ du lemme \ref{lemm:exta}, donc il existe une extension
\begin{equation*}
0 \to \Inda \chi \to \Ea \to \Inda \chi' \to 0
\end{equation*}
dont la classe a une image non nulle par le second morphisme non trivial de cette suite exacte.
En utilisant la proposition \ref{prop:compat} avec $U=\chi'$ et $V=\chi$, on en déduit que les images des classes des extensions $(\IndQp[\Pa^-] \Ea)_{\alpha \in \Delta' - \Delta''}$ par le second morphisme non trivial de la suite exacte \eqref{SEext} sont non nulles et appartiennent à des facteurs directs distincts, donc le conoyau du premier morphisme non trivial de la suite exacte \eqref{SEext} est de dimension au moins $\card(\Delta' - \Delta'')$.
Si $\chi' \neq \chi$, alors $\Delta'' = \emptyset$ d'après le lemme \ref{lemm:irreg} et le premier terme non trivial de la suite exacte \eqref{SEext} est nul d'après \cite[Proposition 5.1.4 (i)]{JH}, donc on obtient le point (i) modulo $p$.
Si $\chi'=\chi$, alors on obtient les bornes du point (ii) de la remarque \ref{rema:ext} modulo $p$. En particulier, on en déduit le point (ii) modulo $p$.

On prouve maintenant le théorème. Soient $\chi,\chi' : T(\Qp) \to \Oe^\times \subset E^\times$ des caractères continus unitaires. Pour $k \geq 1$ entier, les suites exactes \eqref{SEext} et \eqref{SEexta} et le diagramme de la proposition \ref{prop:compat} avec $A=\A{k}$ et les réductions modulo $\pe^k$ des caractères $\chi$ et $\chi'$ forment des systèmes projectifs. On passe à la limite projective puis on tensorise par $E$ sur $\Oe$ en utilisant \cite[Lemme 4.1.3]{Em1} et \cite[Proposition B.2]{JH}. On utilise \cite[Proposition 5.1.6]{JH} au lieu de \cite[Proposition 5.1.4 (i)]{JH}. Le reste de la démonstration est identique à la version modulo $p$. 
\end{proof}

On détermine toutes les auto-extensions d'une série principale de $G(\Qp)$ modulo $p$ lorsque le centre de $G$ est connexe et on en déduit le résultat analogue $p$-adique lorsque $p \neq 2$.

\begin{theo} \label{theo:autoextmodp}
On suppose le centre de $G$ connexe. Soit $\chi : T(\Qp) \to \ke^\times$ un caractère lisse.
\begin{enumerate}[(i)]
\item Si $p\neq2$, alors le foncteur $\IndQp$ induit un isomorphisme $\ke$-linéaire
\begin{equation*}
\Ext_{T(\Qp)}^1 \left( \chi,\chi \right) \iso \Ext_{G(\Qp)}^1 \left( \IndQp \chi,\IndQp \chi \right).
\end{equation*}
\item Si $p=2$, alors le foncteur $\IndQp$ induit une injection $\ke$-linéaire
\begin{equation*}
\Ext_{T(\Qp)}^1 \left( \chi,\chi \right) \hookrightarrow \Ext_{G(\Qp)}^1 \left( \IndQp \chi,\IndQp \chi \right)
\end{equation*}
dont le conoyau est de dimension $\card \{\alpha \in \Delta \mid s_\alpha(\chi)=\chi\}$.
\end{enumerate}
\end{theo}

\begin{proof}
La suite exacte \eqref{SEext} avec $A=\ke$ et $\chi'=\chi$ se complète en une suite exacte de $\ke$-espaces vectoriels
\begin{multline} \label{SEGroth}
0 \to \Ext^1_{T(\Qp)} \left( \chi,\chi \right) \to \Ext^1_{G(\Qp)} \left( \IndQp \chi,\IndQp \chi \right) \\
\to \Hom_{T(\Qp)} \left( \chi,\ROrdQp[1] \left( \IndQp \chi \right) \right) \to \Ext^2_{T(\Qp)} \left( \chi,\chi \right) \\
\to \Ext^2_{G(\Qp)} \left( \IndQp \chi,\IndQp \chi \right)
\end{multline}
dont le premier morphisme non trivial et le dernier morphisme sont induits par le foncteur $\IndQp$ et où $\ROrdQp$ désigne les foncteurs dérivés à droite du foncteur $\OrdQp$ dans la catégorie des représentations lisses localement admissibles de $G(\Qp)$ sur $\ke$ (voir la suite exacte \cite[(3.7.5)]{Em2}).
En utilisant \cite[Remarque 3.7.3]{Em2} et l'isomorphisme \eqref{isoHOrd}, on voit que le troisième terme non trivial de cette suite exacte est de dimension au plus $\card \Delta'$ avec
\begin{equation*}
\Delta' \dfn \left\{ \alpha \in \Delta \mid s_\alpha(\chi) \cdot (\oma) = \chi \right\}.
\end{equation*}
Si $p=2$, alors en utilisant le lemme \ref{lemm:extamod2} (voir aussi la remarque \ref{rema:irregmod2}) et en procédant comme dans la preuve de la version modulo $p$ du théorème \ref{theo:ext}, on obtient le point (ii).

On suppose $p \neq 2$ et on montre le point (i).
Comme $Z$ est connexe, il existe des sous-tores $(T_\alpha)_{\alpha \in \Delta}$ de $T$ tels que l'on a un isomorphisme $T \cong Z \times \prod_{\alpha \in \Delta} T_\alpha$ à travers lequel $Z_\beta \cong Z \times \prod_{\alpha \in \Delta-\{\beta\}} T_\alpha$ pour tout $\beta \in \Delta$ (il suffit de prendre les images de copoids fondamentaux $(\lambda_\alpha)_{\alpha \in \Delta}$, voir \cite[Proposition 2.1.1]{BH}).
Pour tous $\alpha,\beta \in \Delta$, la composée des morphismes naturels $\ke$-linéaires
\begin{equation*}
\Ext^\bullet_{T(\Qp),\chi_{|\Za(\Qp)}} \left( \chi,\chi \right) \to \Ext^\bullet_{T(\Qp)} \left( \chi,\chi \right) \to \Ext^\bullet_{T_\beta(\Qp)} \left( \chi,\chi \right)
\end{equation*}
est un isomorphisme si $\alpha=\beta$ (car $T \cong T_\alpha \times \Za$) et elle est nulle sinon (car $T_\beta \subset \Za$).
Ainsi, le morphisme naturel $\ke$-linéaire
\begin{equation} \label{somext}
\bigoplus_{\alpha \in \Delta} \Ext^\bullet_{T(\Qp),\chi_{|\Za(\Qp)}} \left( \chi,\chi \right) \to \Ext^\bullet_{T(\Qp)} \left( \chi,\chi \right)
\end{equation}
est injectif.
Pour tout $\alpha \in \Delta'$, les lemmes \ref{lemm:irreg} et \ref{lemm:exta2} montrent qu'il existe une auto-extension non triviale $\Ea$ de longueur $2$ de $\chi$ sur laquelle $\Za(\Qp)$ agit à travers $\chi_{|\Za(\Qp)}$ et telle que $\Inda \Ea$ est une auto-extension triviale de longueur $2$ de $\Inda \chi$.
Les images par le morphisme \eqref{somext} des classes des auto-extensions $(\Ea)_{\alpha \in \Delta'}$ engendrent donc un sous-$\ke$-espace vectoriel de dimension $\card \Delta'$ qui est par construction dans le noyau du dernier morphisme de la suite exacte \eqref{SEGroth}, donc le premier morphisme non trivial de la suite exacte \eqref{SEGroth} est un isomorphisme.
\end{proof}

\begin{rema} \phantomsection \label{rema:autoextmodp}
\begin{enumerate}[(i)]
\item En général (sans supposer le centre de $G$ connexe) si $p \neq 2$, alors en posant
\begin{equation*}
\Delta'' \dfn \left\{ \alpha \in \Delta' \mid \chi \circ \alpha^\vee = \omega^{-1} \right\},
\end{equation*}
on peut construire des auto-extensions $(\Ea)_{\alpha \in \Delta''}$ comme dans la preuve (voir la remarque \ref{rema:exta2irreg}) et montrer que les images par le morphisme \eqref{somext} de leurs classes engendrent un sous-$\ke$-espace vectoriel de dimension $\card \Delta'' - \dim_{\Fp} \Hom(Z''/Z''^\circ,\mu_p)$ avec $Z'' \subset T$ le sous-groupe $\bigcap_{\alpha \in \Delta''} \ker \alpha$ et $\mu_p$ le groupe des racines $p$-ièmes de l'unité, donc on s'attend à ce que le conoyau de l'injection $\ke$-linéaire induite par le foncteur $\IndQp$ soit de dimension
\begin{equation*}
\card \left( \Delta' - \Delta'' \right) + \dim_{\Fp} \Hom \left( Z''/Z''^\circ,\mu_p \right).
\end{equation*}
\item Le point (ii) est encore vrai sans supposer le centre de $G$ connexe (cette hypothèse n'est pas utilisée dans la preuve).
\end{enumerate}
\end{rema}

\begin{coro} \label{coro:autoext}
On suppose le centre de $G$ connexe et $p \neq 2$.
Pour tout caractère continu unitaire $\chi : T(\Qp) \to \Oe^\times \subset E^\times$, le foncteur $\IndQp$ induit un isomorphisme $E$-linéaire
\begin{equation*}
\Ext_{T(\Qp)}^1 \left( \chi,\chi \right) \iso \Ext_{G(\Qp)}^1 \left( \IndQp \chi,\IndQp \chi \right).
\end{equation*}
\end{coro}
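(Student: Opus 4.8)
The strategy is to deduce the $p$-adic statement from the modulo $p$ statement (Théorème \ref{theo:autoextmodp} (i)) by the now-standard device of reduction modulo $\pe^k$, dévissage, and passage to the projective limit, exactly as in the last paragraph of the proof of Théorème \ref{theo:ext}. First I would fix a continuous unitary character $\chi : T(\Qp) \to \Oe^\times \subset E^\times$ and, for each integer $k \geq 1$, consider the reduction $\chi \bmod \pe^k : T(\Qp) \to (\A{k})^\times$. Since the center of $G$ is connected, $G$ admits a \og twisting element \fg, and in particular $G^{\der}$ is simply connected, so the results of \cite{JH} apply without the extra hypotheses that were in play for Théorème \ref{theo:1}; moreover one can restrict to representations on which $Z(\Qp)$ acts through $\chi_{|Z(\Qp)}$ as in \cite[§ 7.1]{Pas}.

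The key input is the exact sequence \eqref{SEext} with $A = \A{k}$ and $\chi' = \chi$, namely
\begin{equation*}
0 \to \Ext^1_{T(\Qp)}(\chi,\chi) \to \Ext^1_{G(\Qp)}\left( \IndQp \chi, \IndQp \chi \right) \to \bigoplus_{\alpha \in \Delta} \Hom_{T(\Qp)}\left( \chi, s_\alpha(\chi) \cdot (\oma) \right),
\end{equation*}
together with its analogue for each $\Ga$, the exact sequence \eqref{SEexta}, and the compatibility diagram of Proposition \ref{prop:compat}; all of these form projective systems in $k$. The point is that for $p \neq 2$ one shows the last map in \eqref{SEext} vanishes after passing to the limit and tensoring with $E$. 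The cleanest way is: the modulo $p$ result (Théorème \ref{theo:autoextmodp} (i)) already tells us that over $\ke$ the functor $\IndQp$ induces an isomorphism on $\Ext^1$, i.e. the last map in \eqref{SEext} is zero when $A = \ke$; one then propagates this up the tower $\A{k}$ by dévissage along $0 \to \A{k-1} \to \A{k} \to \ke \to 0$, using the long exact sequences in the $\Ext$-groups and the fact that $\Ext^1_{T(\Qp)}(\chi,\chi)$ and $\Ext^1_{G(\Qp)}(\IndQp\chi,\IndQp\chi)$ behave well under these reductions (via \cite[Lemme 4.1.3]{Em1} and \cite[Proposition B.2]{JH}); one uses \cite[Proposition 5.1.6]{JH} in place of \cite[Proposition 5.1.4 (i)]{JH}. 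Passing to the projective limit over $k$ and tensoring by $E$ over $\Oe$ then yields that $\IndQp$ induces an isomorphism $\Ext^1_{T(\Qp)}(\chi,\chi) \iso \Ext^1_{G(\Qp)}(\IndQp\chi,\IndQp\chi)$ of $E$-vector spaces.

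The main obstacle is the bookkeeping at the level of $\A{k}$: one must check that the Hom-term $\bigoplus_\alpha \Hom_{T(\Qp)}(\chi, s_\alpha(\chi)\cdot(\oma))$, which over $\ke$ controls the cokernel, stays under control over each Artinian quotient and in the limit — in other words that no obstruction class in the $\Ext^2$-term sneaks back in after inverting $p$. But this is precisely the content of the argument already used for Théorème \ref{theo:ext}: the relevant $\Ext^1_{T(\Qp)}(\chi,\chi)$ over $E$ is finite-dimensional and the systems are Mittag-Leffler, so the limit is exact. Concretely, one can simply invoke that the whole proof is \emph{identical} to the passage from the modulo $p$ version of Théorème \ref{theo:ext} to its $p$-adic version, reading Théorème \ref{theo:autoextmodp} (i) in place of the modulo $p$ input; no new idea is needed, only the observation that $p \neq 2$ is exactly the hypothesis under which the modulo $p$ cokernel vanishes.
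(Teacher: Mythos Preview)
Your approach differs substantially from the paper's and contains a gap. The paper's proof is a two-line dimension count: since $\IndQp$ is exact with a left quasi-inverse (\cite[Corollaire 4.3.5]{Em1}), it induces an injection $\Ext^1_{T(\Qp)}(\chi,\chi) \hookrightarrow \Ext^1_{G(\Qp)}(\IndQp\chi,\IndQp\chi)$ over $E$; one then chains the inequality $\dim_E \Ext^1_G \leq \dim_{\ke} \Ext^1_G$ of \cite[Proposition B.2]{JH}, the equality $\dim_{\ke}\Ext^1_G = \dim_{\ke}\Ext^1_T$ from Théorème \ref{theo:autoextmodp}(i), and the equality $\dim_{\ke}\Ext^1_T = \dim_E\Ext^1_T$ valid for $p\neq 2$ (via \cite[Propositions 5.1.4(ii) et 5.1.6]{JH}) to conclude that the injection is between spaces of the same finite dimension. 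No tower $(\A{k})_k$ and no sequence \eqref{SEext} are used.

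Your dévissage step is not justified and the analogy with Théorème \ref{theo:ext} breaks down. In the proof of Théorème \ref{theo:ext}(ii), the passage to the $p$-adic statement works because the genericity hypothesis $s_\alpha(\chi)\cdot(\epsa)\neq\chi$ kills the last term of \eqref{SEext} outright over $E$. Here there is no such hypothesis, so the Hom-term can survive, and you need the boundary map $\Ext^1_G \to \bigoplus_\alpha \Hom_T$ itself to vanish. Knowing it vanishes over $\ke$ (which is the content of Théorème \ref{theo:autoextmodp}(i)) does not propagate to $\A{k}$ via the three-term sequence \eqref{SEext} alone: lifting this vanishing would require control on $\Ext^2$ at each level, i.e.\ an analogue of the five-term sequence \eqref{SEGroth} and of the $\Ext^2_T$ construction in the proof of Théorème \ref{theo:autoextmodp}(i), which is specific to $\ke$. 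As a separate minor point, your assertion that connected center implies $G^{\der}$ simply connected (and hence the existence of a twisting element) is false --- $\PGL_2$ is a counterexample --- though this is not essential to your argument.
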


\begin{proof}
Soit $\chi : T(\Qp) \to \Oe^\times \subset E^\times$ un caractère continu unitaire.
Le foncteur exact $\IndQp$ admet un quasi-inverse à gauche (voir \cite[Corollaire 4.3.5]{Em1}), donc il induit une injection $E$-linéaire
\begin{equation*}
\Ext_{T(\Qp)}^1 \left( \chi,\chi \right) \hookrightarrow \Ext_{G(\Qp)}^1 \left( \IndQp \chi,\IndQp \chi \right).
\end{equation*}
En utilisant l'inégalité de \cite[Proposition B.2]{JH} et le fait que $p \neq 2$ d'où
\begin{equation*}
\dim_{\ke} \Ext_{T(\Qp)}^1 \left( \overline{\chi},\overline{\chi} \right) = \dim_E \Ext_{T(\Qp)}^1 \left( \chi,\chi \right)
\end{equation*}
avec $\overline{\chi} : T(\Qp) \to \ke^\times$ la réduction modulo $\pe$ de $\chi$ (voir \cite[Proposition 5.1.4 (ii)]{JH} et \cite[Proposition 5.1.6]{JH}), on déduit du point (i) du théorème \ref{theo:autoextmodp} que cette injection est un isomorphisme.
\end{proof}

\begin{rema}
On s'attend à ce que le résultat soit encore vrai lorsque $p=2$.
\end{rema}

\subsection{Variante pour les représentations ordinaires}

Soient $P \subset G$ un sous-groupe parabolique standard et $L \subset P$ le sous-groupe de Levi standard. On reprend les notations de la sous-section \ref{ssec:ord}.

\begin{defi} \label{defi:Spcont}
Pour tout sous-groupe parabolique standard $Q \subset L$, on définit la \emph{représentation spéciale} relative à $Q$ de $L(F)$ sur $E$
\begin{equation*}
\Spc_Q \dfn \frac{\IndL[Q^-] \unc}{\sum_{Q \subsetneqq Q' \subset L} \IndL[Q'^-] \unc}
\end{equation*}
avec $Q'$ parmi les sous-groupes paraboliques de $L$ et $\unc$ la représentation triviale sur $E$. Lorsque $Q=B_L$, on l'appelle la \emph{représentation de Steinberg} de $L(F)$ sur $E$ et on la note $\Stc$. Lorsque $Q=Q_\alpha$ avec $\alpha \in \Delta_L$, on la note $\Spc_\alpha$.
\end{defi}

\begin{prop} \label{prop:Spc}
Les représentations spéciales sont des représentations continues unitaires admissibles de $L(F)$ sur $E$ topologiquement irréductibles et deux à deux non isomorphes. Elles forment les constituants irréductibles de $\IndL \unc$, chacune apparaissant avec multiplicité un.
\end{prop}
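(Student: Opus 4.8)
L'idée est de déduire l'énoncé par réduction modulo $\pe$ des résultats analogues modulo $p$ rappelés dans la remarque \ref{rema:spe}. On commencera par identifier les réductions. Comme $Q^-(F) \backslash L(F)$ est profini, $\IndL[Q^-] \unc$ est l'espace de Banach des fonctions continues sur ce quotient à valeurs dans $E$ muni de la norme sup, et la réduction modulo $\pe$ de sa boule unité (les fonctions à valeurs dans $\Oe$) est $\IndL[Q^-] \un$ sur $\ke$, une fonction continue à valeurs dans $\ke$ sur un espace profini étant localement constante. Pour $Q \subset Q'$, l'inclusion $Q^- \subset Q'^-$ fournit une surjection $Q^-(F) \backslash L(F) \twoheadrightarrow Q'^-(F) \backslash L(F)$, de sorte que l'injection $\IndL[Q'^-] \unc \hookrightarrow \IndL[Q^-] \unc$ est isométrique et la boule unité de la source est l'intersection de celle du but avec la source. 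La réduction modulo $\pe$ sera ainsi compatible avec les sommes et les quotients définissant les représentations spéciales, d'où un isomorphisme $L(F)$-équivariant entre la réduction modulo $\pe$ de la boule unité naturelle de $\Spc_Q$ et $\Sp_Q$ sur $\ke$. Enfin, l'induite parabolique continue de la représentation triviale est admissible et la catégorie des représentations continues unitaires admissibles de $L(F)$ sur $E$ est abélienne (voir \cite[§ 3.1]{BH}), donc les morphismes considérés sont d'image fermée et les $\Spc_Q$ sont admissibles.

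Pour l'irréductibilité topologique de $\Spc_Q$, on partira d'une sous-représentation fermée non nulle $U \subsetneq \Spc_Q$ et de son réseau $U^0 = U \cap (\Spc_Q)^0$. On a $\pe (\Spc_Q)^0 \cap U^0 = \pe U^0$ (car $\pe v \in U$ entraîne $v \in U$ donc $v \in U^0$), donc l'image $\overline{U^0}$ de $U^0$ dans $\Sp_Q$ est non nulle ; comme $\Spc_Q/U$ est admissible non nulle, la réduction modulo $\pe$ de sa boule unité est non nulle et c'est un quotient de $\Sp_Q/\overline{U^0}$, donc $\overline{U^0} \neq \Sp_Q$. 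Ceci contredira l'irréductibilité de $\Sp_Q$. Pour la non-isomorphie, un isomorphisme $\Spc_Q \iso \Spc_{Q'}$ enverra la boule unité naturelle de $\Spc_Q$ sur un réseau commensurable à celle de $\Spc_{Q'}$, de sorte que par un argument de Brauer--Nesbitt (licite car $\Sp_{Q'}$ est de longueur finie) $\Sp_Q$ et $\Sp_{Q'}$ auront la même semi-simplification ; étant irréductibles, elles seront isomorphes, d'où $Q=Q'$.

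Pour identifier les constituants de $\IndL \unc$, on observera que toute suite strictement croissante de sous-représentations fermées de $\IndL \unc$ donne, en prenant à chaque fois l'intersection avec $(\IndL \unc)^0$ et en appliquant la réduction modulo $\pe$, une suite strictement croissante de sous-représentations de $\IndL \un$ (la stricte croissance se voit en appliquant la réduction aux quotients successifs, qui sont des représentations de Banach non nulles, donc de boule unité de réduction non nulle) ; la longueur de $\IndL \unc$ est donc au plus celle de $\IndL \un$, égale à $2^{\card \Delta_L}$ d'après la remarque \ref{rema:spe}. D'autre part, chacune des $2^{\card \Delta_L}$ représentations $\Spc_Q$ (pour $Q \subset L$ parabolique standard) est un sous-quotient topologiquement irréductible de $\IndL[Q^-] \unc \subset \IndL \unc$, et elles sont deux à deux non isomorphes ; ce sont donc des facteurs de Jordan--Hölder distincts de $\IndL \unc$, d'où la longueur de $\IndL \unc$ est au moins $2^{\card \Delta_L}$. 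On conclura que les facteurs de Jordan--Hölder de $\IndL \unc$ sont exactement les $\Spc_Q$, chacun apparaissant avec multiplicité un.

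La principale difficulté n'est pas conceptuelle, tout le contenu étant dans les résultats modulo $p$ de la remarque \ref{rema:spe} : le point à soigner sera le maniement des réseaux, notamment l'observation d'isométrie ci-dessus et l'exactitude de la réduction modulo $\pe$ pour les choix compatibles de réseaux $V' \cap (\IndL \unc)^0$, qui assurent que les comptages de longueurs se font sans perte.
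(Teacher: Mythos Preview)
Your overall strategy coincides with the paper's: deduce everything from the mod $p$ results of Remarque~\ref{rema:spe} via the identification of the reduction modulo $\pe$ of the natural lattice in $\Spc_Q$ with $\Sp_Q$ over $\ke$. Once that identification is granted, your arguments for topological irreducibility, pairwise non-isomorphism, and the Jordan--H\"older analysis of $\IndL \unc$ are correct and match the paper's reasoning (the paper packages the identification as Lemme~\ref{lemm:spe} with $k=1$ and states the proposition as an immediate consequence).

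The gap lies precisely in establishing that identification. You assert that since each embedding $\IndL[Q'^-] \unc \hookrightarrow \IndL[Q^-] \unc$ is isometric, the reduction modulo $\pe$ is \og compatible avec les sommes\fg. Isometry of each individual embedding gives $\IndL[Q'^-] \Oe = \IndL[Q'^-] E \cap \IndL[Q^-] \Oe$ for a single $Q'$, but it does \emph{not} give
\begin{equation*}
\Bigl( \sum_{Q \subsetneqq Q' \subset L} \IndL[Q'^-] E \Bigr) \cap \IndL[Q^-] \Oe = \sum_{Q \subsetneqq Q' \subset L} \IndL[Q'^-] \Oe,
\end{equation*}
which is what is required for the quotient lattice to reduce to $\Sp_Q$. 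A sum of saturated sublattices need not be saturated: already in $E^2$ with the sup norm, the saturated lines $\Oe(1,0)$ and $\Oe(1,\pe)$ span $E^2$ over $E$ but their sum is strictly contained in $\Oe^2$. The paper isolates this statement as a Sous-lemme and proves it by a double induction (on the number $n$ of summands and on the valuation defect $l$), using at the key step the multiplicity-freeness of $\IndL[Q^-] \ke$ from Remarque~\ref{rema:spe} to control the intersections $\IndL[Q_n^-] \ke \cap \sum_{k<n} \IndL[Q_k^-] \ke$. Your concluding remark correctly flags \og le maniement des r\'eseaux\fg{} as the delicate point but treats it as routine; it is in fact the only place where real work happens, and the isometry observation alone does not suffice.
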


Cette proposition est une conséquence directe du résultat analogue modulo $p$ (voir la remarque \ref{rema:spe}) et du lemme suivant avec $k=1$.

\begin{lemm} \label{lemm:spe}
Soit $Q \subset L$ un sous-groupe parabolique standard.
Il existe une boule $\Spcz_Q \subset \Spc_Q$ stable par $L(F)$ telle que pour tout entier $k \geq 1$, $\Spcz_Q/\pe^k\Spcz_Q$ est isomorphe à la représentation spéciale relative à $Q$ de $L(F)$ sur $\A{k}$.
\end{lemm}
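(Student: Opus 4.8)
The plan is to push into $\Spc_Q$ the standard unit ball of $\IndL[Q^-]\unc$. Since the induced character is trivial and $Q^-(F)\backslash L(F)$ is compact, $V \dfn \IndL[Q^-]\unc$ is the Banach space of continuous functions $Q^-(F)\backslash L(F) \to E$ with the sup norm, and $V^0$, the set of those with values in $\Oe$, is an open bounded $L(F)$-stable $\Oe$-lattice, without $\pe$-torsion, $\pe$-adically separated and complete, with $V^0/\pe^k V^0 \cong \IndL[Q^-]\un$ over $\A{k}$ for every integer $k \geq 1$. Likewise, for every standard parabolic subgroup $Q \subsetneqq Q' \subset L$, the closed $L(F)$-equivariant injection $\IndL[Q'^-]\unc \hookrightarrow V$ is the pull-back of functions along the surjection $\pi_{Q'} \colon Q^-(F)\backslash L(F) \twoheadrightarrow Q'^-(F)\backslash L(F)$; since a continuous $\Oe$-valued function on the source that is constant along the fibres of $\pi_{Q'}$ descends to such a function on the target, one gets $V^0 \cap \IndL[Q'^-]\unc = \IndL[Q'^-](\Oe) \dfn (\IndL[Q'^-]\unc)^0$, and the image of this $\Oe$-lattice in $V^0/\pe^k V^0 = \IndL[Q^-]\un$ is $\IndL[Q'^-]\un$ over $\A{k}$. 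Set $N \dfn \sum_{Q \subsetneqq Q' \subset L} \IndL[Q'^-]\unc \subset V$, so that $\Spc_Q = V/N$, and $N' \dfn \sum_{Q \subsetneqq Q' \subset L} \IndL[Q'^-](\Oe) \subset V^0$; I propose to take for $\Spcz_Q$ the image of $V^0$ in $\Spc_Q$, that is $V^0/(V^0 \cap N)$.

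I would first show that $N'$ is $\pe$-adically closed in $V^0$ and that $V^0/N'$ has the right reduction modulo $\pe^k$. Writing $N'_k$ for the image of $N'$ in $V^0/\pe^k V^0$, one has $N'_k = \sum_{Q'} \IndL[Q'^-]\un$ over $\A{k}$ by the above. Since $\bigoplus_{Q'} \IndL[Q'^-](\Oe) = \varprojlim_k \bigoplus_{Q'} \IndL[Q'^-]\un$ (over $\A{k}$) with surjective transition maps, passing to the projective limit in the surjections $\bigoplus_{Q'} \IndL[Q'^-]\un \twoheadrightarrow N'_k$ remains surjective (Mittag--Leffler), which gives $\varprojlim_k N'_k = N'$ inside $V^0 = \varprojlim_k V^0/\pe^k V^0$, i.e.\ $\bigcap_k (N' + \pe^k V^0) = N'$. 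Hence $V^0/N'$ is $\pe$-adically separated and complete, and $(V^0/N')/\pe^k(V^0/N') = V^0/(N' + \pe^k V^0) = \IndL[Q^-]\un / \sum_{Q'} \IndL[Q'^-]\un = \Sp_Q$ over $\A{k}$, $L(F)$-equivariantly, which is the special representation relative to $Q$ of $L(F)$ over $\A{k}$ by definition.

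It remains to identify $V^0 \cap N$ with $N'$: this will yield at once that $\Spcz_Q = V^0/(V^0 \cap N) = V^0/N'$ is indeed an $L(F)$-stable ball in $\Spc_Q$ (an open bounded $\pe$-adically separated and complete $\Oe$-lattice, by the first step) and that its reduction modulo $\pe^k$ is the desired one. Every element of $V^0 \cap N$ is, over $E$, an $E$-linear combination of functions pulled back from the $Q'^-(F)\backslash L(F)$, hence lies in $N'$ after multiplication by a suitable power of $\pe$; it therefore suffices to know that $V^0/N'$ has no $\pe$-torsion. Via the identifications of the first step, this torsion-freeness is equivalent to the integral distributivity of the family of subrepresentations $\IndL[Q'^-]$ of $\IndL[Q^-]$ indexed by the standard parabolic subgroups of $L$ strictly containing $Q$. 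This is the only substantial point of the proof: it is exactly the type of result of \cite{AHHV2} (proven as in \cite[V.16 Lemma 9]{AHHV}), already used modulo $\pe^k$ in the proof of Proposition \ref{prop:filind}; one applies it here over $\Oe$, either by rerunning the argument directly or by deducing it from its reductions modulo $\pe^k$ together with the $\pe$-adic closedness of $N'$. Everything else is formal manipulation of the standard lattices and of $\pe$-adic completions; the main obstacle is this integral distributivity input.
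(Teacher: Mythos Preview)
Your approach coincides with the paper's: take $\Spcz_Q$ to be the image of $V^0 = \IndL[Q^-] \Oe$ in $\Spc_Q$ and reduce to the identity $V^0 \cap N = N'$ with $N' = \sum_{Q'} \IndL[Q'^-] \Oe$. Your organization is arguably cleaner: you first show $N'$ is $\pe$-adically closed in $V^0$ via Mittag--Leffler and compute $(V^0/N')/\pe^k \cong \Sp_Q$ over $\A{k}$, then observe that $V^0 \cap N = N'$ follows from the $\pe$-torsion-freeness of $V^0/N'$.

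The one point to correct is the justification of that torsion-freeness. The result from \cite{AHHV2} used in Proposition~\ref{prop:filind} concerns the Bruhat filtration by $B(F)$-subrepresentations over an artinian ring; it says nothing about $\Oe$-lattices inside $\IndL[Q^-] \unc$, and there is no evident way to ``rerun it over $\Oe$'' here. The paper proves the needed statement as a separate sub-lemma, by induction on the number $n$ of parabolics $Q'$ and on the exponent $l$ with $f \in \sum_{Q'} \IndL[Q'^-] \pe^{-l}\Oe$, and the actual input is different: it is the multiplicity-one structure of $\IndL[Q^-] \ke$ (Remark~\ref{rema:spe}), which makes its lattice of subrepresentations distributive over $\ke$, so that a relation $\sum_{Q'} \bar f_{Q'}=0$ can be rearranged to isolate one term and then lifted. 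Your reduction to torsion-freeness already absorbs the induction on $l$; what remains to supply is the induction on $n$ using this mod-$\pe$ distributivity, not a citation of \cite{AHHV2}.
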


\begin{proof}
Pour tout $r \in \Z$, on note $\IndL[Q^-] \pe^r\Oe \subset \IndL[Q^-] E$ la boule stable par $L(F)$ constituée des fonctions à valeurs dans $\pe^r\Oe$. On note $\Spcz_Q$ l'image de $\IndL[Q^-] \Oe$ dans $\Spc_Q$. C'est une boule de $\Spc_Q$ stable par $L(F)$ et on a une suite exacte courte
\begin{equation*}
0 \to \sum_{Q \subsetneqq Q' \subset L} \IndL[Q'^-] \Oe \to \IndL[Q^-] \Oe \to \Spcz_Q \to 0
\end{equation*}
(l'injectivité du premier morphisme non trivial est immédiate, la surjectivité du second morphisme non trivial résulte de la définition de $\Spcz_Q$ et l'exactitude au milieu se déduit du sous-lemme ci-dessous avec $r=0$).

Soit $k \geq 1$ entier. La composition avec la projection $\Oe \twoheadrightarrow \A{k}$ induit une application
\begin{equation*}
\sum_{Q \subsetneqq Q' \subset L} \IndL[Q'^-] \Oe \twoheadrightarrow \sum_{Q \subsetneqq Q' \subset L} \IndL[Q'^-] \A{k}
\end{equation*}
(où la première somme est calculée dans $\IndL[Q^-] \Oe$ et la seconde dans $\IndL[Q^-] \A{k}$) qui coïncide avec la réduction modulo $\pe^k$ (cela résulte du sous-lemme ci-dessous avec $r=k$).
On en déduit que la réduction modulo $\pe^k$ (c'est-à-dire l'image par le foncteur $\A{k} \otimes_{\Oe} (-)$) de la suite exacte précédente est la suite exacte courte
\begin{equation*}
0 \to \sum_{Q \subsetneqq Q' \subset L} \IndL[Q'^-] \A{k} \to \IndL[Q^-] \A{k} \to \Spcz_Q/\pe^k\Spcz_Q \to 0
\end{equation*}
(l'injectivité du premier morphisme non trivial résulte du fait que $\Spcz_Q$ est un $\Oe$-module sans torsion donc plat). Ainsi, $\Spcz_Q/\pe^k\Spcz_Q$ est isomorphe à la représentation spéciale relative à $Q$ de $L(F)$ sur $\A{k}$.
\end{proof}

\begin{slem}
Soient $n \in \Nbb$ et $Q \subsetneqq Q_1,\dots,Q_n \subset L$ des sous-groupes paraboliques standards. Pour tout $r \in \Z$, on a l'égalité suivante dans $\IndL[Q^-] E$ :
\begin{equation*}
\IndL[Q^-] \pe^r\Oe \cap \sum_{k=1}^n \IndL[Q_k^-] E = \sum_{k=1}^n \IndL[Q_k^-] \pe^r\Oe.
\end{equation*}
\end{slem}

\begin{proof}
On procède par récurrence sur $n$. Le cas $n=0$ est trivial. On suppose $n>0$ et le lemme vrai pour $n-1$. En multipliant par $\pe^{-r}$, on se ramène au cas $r=0$. Il suffit de montrer que pour tout $l \in \Nbb$, on a l'inclusion suivante dans $\IndL[Q^-] E$ :
\begin{equation*}
\IndL[Q^-] \Oe \cap \sum_{k=1}^n \IndL[Q_k^-] \pe^{-l}\Oe \subset \sum_{k=1}^n \IndL[Q_k^-] \Oe.
\end{equation*}

On procède par récurrence sur $l$. Le cas $l=0$ est trivial. On suppose $l>0$ et l'inclusion vraie pour $l-1$.
Soit $(f_k)_{k \in \llbrack 1,n \rrbrack} \in \prod_{k=1}^n \IndL[Q_k^-] \pe^{-l}\Oe$ tel que $\sum_{k=1}^n f_k \in \IndL[Q^-] \Oe$.
Pour tout $k \in \llbrack 1,n \rrbrack$, on note $\overline{f}_k \in \IndL[Q_k^-] \ke$ la composée de $f_k$ avec la projection $\pe^{-l}\Oe \twoheadrightarrow \pe^{-l}\Oe/\pe^{1-l}\Oe \cong \ke$. On a donc l'égalité $\sum_{k=1}^n \overline{f}_k=0$ dans $\IndL[Q^-] \ke$. Comme $\IndL[Q^-] \ke$ est de longueur fini sans multiplicité dans la catégorie des représentations lisses de $L(F)$ sur $\ke$ (voir la remarque \ref{rema:spe}), on a l'égalité suivante dans $\IndL[Q^-] \ke$ :
\begin{equation*}
\IndL[Q_n^-] \ke \cap \sum_{k=1}^{n-1} \IndL[Q_k^-] \ke = \sum_{k=1}^{n-1} \left( \IndL[Q_n^-] \ke \cap \IndL[Q_k^-] \ke \right).
\end{equation*}
Ainsi, $\overline{f}_n = \sum_{k=1}^{n-1} \overline{f}{}'_k$ avec $\overline{f}{}'_k \in \IndL[Q_n^-] \ke \cap \IndL[Q_k^-] \ke$ pour tout $k \in \llbrack 1,n-1 \rrbrack$. On choisit un relèvement $f_k' \in \IndL[Q_n^-] \pe^{-l}\Oe \cap \IndL[Q_k^-] \pe^{-l}\Oe$ de $\overline{f}{}'_k$ pour tout $k \in \llbrack 1,n-1 \rrbrack$. On obtient $f_n = \sum_{k=1}^{n-1} f_k' + f_n'$ avec $f_n' \in \IndL[Q_n^-] \pe^{1-l}\Oe$, donc $\sum_{k=1}^n f_k = \sum_{k=1}^{n-1} (f_k+f_k') + f_n' \in \IndL[Q^-] \Oe$ par hypothèse, d'où $\sum_{k=1}^{n-1} (f_k+f_k') \in \IndL[Q^-] \pe^{1-l}\Oe$.
Par hypothèse de récurrence avec $n-1$, on en déduit que $\sum_{k=1}^{n-1} (f_k+f_k') = \sum_{k=1}^{n-1} f_k''$ avec $f_k'' \in \IndL[Q_k^-] \pe^{1-l}\Oe$ pour tout $k \in \llbrack 1,n-1 \rrbrack$ et par hypothèse de récurrence avec $l-1$, on en conclut que $\sum_{k=1}^n f_k = \sum_{k=1}^{n-1} f_k'' + f_n' \in \sum_{k=1}^n \IndL[Q_k^-] \Oe$.
\end{proof}

\begin{defi}
Une \emph{représentation ordinaire} de $G(F)$ sur $E$ est une représentation de la forme $\Ind[P^-] (\sigma \otimes \chi)$ avec $\sigma$ une représentation spéciale de $L(F)$ sur $E$ et $\chi : L(F) \to \Oe^\times \subset E^\times$ un caractère continu unitaire.
\end{defi}

\begin{rema}
Si \cite[Conjecture 3.1.2]{BH} est vraie, alors les représentations ordinaires topologiquement irréductibles sont exactement les sous-quotients irréductibles de séries principales (cela se déduit du résultat analogue modulo $p$).
\end{rema}

On détermine les extensions d'une série principale de $G(F)$ par une représentation ordinaire de $G(F)$. On traite séparément les cas $F=\Qp$ et $F \neq \Qp$.

\begin{prop} \label{prop:extord}
On suppose $F=\Qp$. Soient $\sigma$ une représentation spéciale de $L(\Qp)$ sur $E$ et $\chi : L(\Qp) \to \Oe^\times \subset E^\times$ un caractère continu unitaire.
\begin{enumerate}[(i)]
\item Si $\chi' : T(\Qp) \to \Oe^\times \subset E^\times$ est un autre caractère continu unitaire, alors
\begin{equation*}
\Ext^1_{G(\Qp)} \left( \IndQp \chi',\IndQp[P^-] (\sigma \otimes \chi) \right) \neq 0
\end{equation*}
si et seulement si ou bien $\sigma=\Stc$ et $\chi' = \chi$, ou bien $\sigma=\Stc$ et $\chi' = s_\alpha(\chi) \cdot (\epsa)$ avec $\alpha \in \Delta-\Delta_L$, ou bien $\sigma=\Spc_\alpha$ et $\chi' = \chi \cdot (\epsa)$ avec $\alpha \in \Delta_L$.
\item Si $\chi' : T(\Qp) \to \Oe^\times \subset E^\times$ est un caractère continu unitaire distinct de $\chi$, alors
\begin{multline*}
\dim_E \Ext^1_{G(\Qp)} \left( \IndQp \chi',\IndQp[P^-] (\Stc \otimes \chi) \right) \\
= \card \left\{ \alpha \in \Delta-\Delta_L \mid \chi' = s_\alpha(\chi) \cdot (\epsa) \right\}.
\end{multline*}
\item Pour tout $\alpha \in \Delta_L$, on a
\begin{equation*}
\dim_E \Ext^1_{G(\Qp)} \left( \IndQp \chi \cdot (\epsa),\IndQp[P^-] (\Spc_\alpha \otimes \chi) \right) = 1.
\end{equation*}
\item Si $s_\alpha(\chi) \cdot (\epsa) \neq \chi$ pour tout $\alpha \in \Delta-\Delta_L$, alors le foncteur $\IndQp$ et la projection $\IndQp \chi \twoheadrightarrow \IndQp[P^-] (\Stc \otimes \chi)$ induisent un isomorphisme $E$-linéaire
\begin{equation*}
\Ext^1_{T(\Qp)} \left( \chi,\chi \right) \iso \Ext^1_{G(\Qp)} \left( \IndQp \chi,\IndQp[P^-] (\Stc \otimes \chi) \right).
\end{equation*}
\end{enumerate}
\end{prop}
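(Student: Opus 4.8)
On procède comme dans la preuve du théorème \ref{theo:ext}. L'outil central est la suite exacte \eqref{SEextP} pour le triplet $(G,B,T)$ avec $U=\chi'$ et $V=\IndQp[P^-](\sigma\otimes\chi)$ : on l'écrit modulo $\pe^k$ (le lemme \ref{lemm:spe} fournit un réseau de $\Spc_Q$ dont les réductions sont les représentations spéciales sur $\A{k}$), on passe à la limite projective, puis on tensorise par $E$ en utilisant \cite[Lemme 4.1.3]{Em1} et \cite[Proposition B.2]{JH}. D'après les corollaires \ref{coro:ord} et \ref{coro:h1ord} (i), on a $\OrdQp V\cong\chi$ si $\sigma=\Stc$ et $\OrdQp V=0$ sinon, tandis que $\HOrdQp V\cong\bigoplus_{\alpha\in\Delta-\Delta_L}s_\alpha(\chi)\cdot(\epsa)$ si $\sigma=\Stc$, $\HOrdQp V\cong\chi\cdot(\epsa)$ si $\sigma=\Spc_\alpha$ avec $\alpha\in\Delta_L$, et $\HOrdQp V=0$ sinon.

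On en déduit aussitôt toutes les majorations ainsi que le point (iv). Si $\sigma$ n'est ni $\Stc$ ni un $\Spc_\alpha$, les deux termes extrêmes de \eqref{SEextP} sont nuls, d'où $\Ext^1_{G(\Qp)}(\IndQp\chi',V)=0$. Si $\sigma=\Spc_\alpha$, alors $\OrdQp V=0$ donne une injection $\Ext^1_{G(\Qp)}(\IndQp\chi',V)\hookrightarrow\Hom_{T(\Qp)}(\chi',\chi\cdot(\epsa))$, d'où la majoration du point (iii) et la condition nécessaire du point (i) dans ce cas. Si $\sigma=\Stc$ et $\chi'\neq\chi$, alors $\Ext^1_{T(\Qp)}(\chi',\chi)=0$ d'après \cite[Proposition 5.1.6]{JH}, d'où une injection dans $\bigoplus_{\alpha\in\Delta-\Delta_L}\Hom_{T(\Qp)}(\chi',s_\alpha(\chi)\cdot(\epsa))$ et la majoration du point (ii). Si $\sigma=\Stc$ et $\chi'=\chi$, la sous-représentation $\Ext^1_{T(\Qp)}(\chi,\chi)\neq0$ de \eqref{SEextP} donne le cas correspondant du point (i) ; et sous l'hypothèse de généricité du point (iv), on a de plus $\Hom_{T(\Qp)}(\chi,\HOrdQp V)=0$, donc le premier morphisme non trivial de \eqref{SEextP} est un isomorphisme — c'est l'isomorphisme du point (iv), car le morphisme naturel $\IndQp(\OrdQp V)=\IndQp\chi\to V$ est la projection (on le vérifie en notant que $\OrdQp$ de la projection est un isomorphisme, puis via les identités triangulaires de l'adjonction).

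Pour les minorations restantes, il faut prouver la surjectivité du second morphisme non trivial de \eqref{SEextP}. Dans le cas $\sigma=\Stc$ et $\chi'\neq\chi$, on applique la fonctorialité de \eqref{SEextP} en $V$ à la projection $p:\IndQp\chi\twoheadrightarrow\IndQp[P^-](\Stc\otimes\chi)$. Par l'isomorphisme \eqref{isoIndSp}, son noyau est de la forme $\IndQp[P^-](M\otimes\chi)$ avec $M\subset\IndL\unc$ une sous-représentation dont les facteurs de composition sont les $\Spc_{Q'}$ pour $B_L\subsetneqq Q'\subset L$. Les corollaires \ref{coro:ord} et \ref{coro:h1ord} (i) appliqués à ces $\Spc_{Q'}$, joints à un dévissage le long d'une suite de composition de $M$, montrent que $\OrdQp\IndQp[P^-](M\otimes\chi)=0$ et que $\HOrdQp\IndQp[P^-](M\otimes\chi)$ est de dimension $\card \Delta_L$ (seuls les facteurs $\Spc_{Q_\beta}$, $\beta\in\Delta_L$, contribuant) ; la suite exacte longue des parties ordinaires dérivées entraîne alors, par comptage de dimensions, que $\HOrdQp(p):\HOrdQp\IndQp\chi\to\HOrdQp\IndQp[P^-](\Stc\otimes\chi)$ est surjectif. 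Comme le second morphisme non trivial de \eqref{SEextP} pour $V=\IndQp\chi$ induit, via l'isomorphisme \eqref{isoHOrd}, un isomorphisme $\Ext^1_{G(\Qp)}(\IndQp\chi',\IndQp\chi)\iso\bigoplus_{\alpha\in\Delta}\Hom_{T(\Qp)}(\chi',s_\alpha(\chi)\cdot(\epsa))$ (théorème \ref{theo:ext} (i) et \cite[Proposition 5.1.6]{JH}, car $\chi'\neq\chi$), le diagramme commutatif de \eqref{SEextP} relatif à $p$ force la surjectivité du second morphisme non trivial de \eqref{SEextP} pour $V=\IndQp[P^-](\Stc\otimes\chi)$. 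On obtient ainsi le point (ii), puis le point (i) pour $\sigma=\Stc$.

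Dans le cas $\sigma=\Spc_\alpha$ (donc $\alpha\in\Delta_L$, d'où $\chi\circ\alpha^\vee=1$ puisque $\chi$ provient d'un caractère de $L$), on remarque que $BQ_\alpha=\Pa$ et $L_{Q_\alpha}=\Ga$, de sorte que $\IndQp[(BQ_\alpha)^-]\chi\cong\IndQp[\Pa^-]\chi_{|\Ga}$. Par un argument analogue à celui du lemme \ref{lemm:exta} (via le lemme \ref{lemm:Ga} et \cite[Proposition 4.3.13 (2)]{Em2}, puis réduction modulo $\pe^k$ et passage à la limite), on a $\Ext^1_{\Ga(\Qp)}(\Inda(\chi\cdot(\epsa)),\chi_{|\Ga})\neq0$ ; en appliquant à une telle extension non scindée le foncteur exact $\IndQp[\Pa^-]$, qui admet un quasi-inverse à gauche, et en utilisant l'isomorphisme de foncteurs $\IndQp\cong\IndQp[\Pa^-]\Inda$, on obtient une extension non scindée de $\IndQp\chi\cdot(\epsa)$ par $\IndQp[(BQ_\alpha)^-]\chi$. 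La projection $q:\IndQp[(BQ_\alpha)^-]\chi\twoheadrightarrow\IndQp[P^-](\Spc_\alpha\otimes\chi)$ a pour noyau un $\IndQp[P^-](M'\otimes\chi)$ avec $M'$ de facteurs de composition les $\Spc_{Q'}$ pour $Q_\alpha\subsetneqq Q'\subset L$, dont aucun n'est $\Stc$ ni un $\Spc_\beta$ ; les corollaires \ref{coro:ord} et \ref{coro:h1ord} (i) et un dévissage donnent alors $\OrdQp\IndQp[P^-](M'\otimes\chi)=\HOrdQp\IndQp[P^-](M'\otimes\chi)=0$, donc $\HOrdQp(q)$ est un isomorphisme par la suite exacte longue des parties ordinaires dérivées. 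En poussant en avant l'extension construite le long de $q$ via la fonctorialité de \eqref{SEextP} en $V$, on obtient une extension non scindée de $\IndQp\chi\cdot(\epsa)$ par $\IndQp[P^-](\Spc_\alpha\otimes\chi)$, d'où le point (iii) et le point (i) pour $\sigma=\Spc_\alpha$. Le point le plus délicat est le contrôle des morphismes induits $\HOrdQp(p)$ et $\HOrdQp(q)$ (via les dévissages et la structure de $\delta$-foncteur), ainsi que, pour le point (iii), la réduction au rang semi-simple $1$ et le relèvement $p$-adique de l'extension non scindée.
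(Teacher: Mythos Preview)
Your proof is correct and largely parallels the paper's, but the argument for the \emph{lower bound} in (ii) is genuinely different. The paper proves it by filtering $\IndQp\chi$ by the ordinary representations $\IndQp[P^-](\Sp_Q\otimes\chi)$ for $Q\subset L$ parabolic, summing the $\Ext^1$ dimensions along this filtration, and invoking the already-established mod~$p$ lower bound of the theorem \ref{theo:ext} together with the vanishing \eqref{SEextSp} and point (iii) to force the Steinberg piece to carry at least $\card((\Delta-\Delta_L)\cap\Delta')$. You instead use functoriality of \eqref{SEextP} in $V$ along the projection $p$ and deduce the surjectivity of its second morphism from the surjectivity of $\HOrdQp(p)$, which you get by a length count in the long exact sequence of the $\delta$-functor $\HOrd$. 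Both arguments ultimately rest on theorem \ref{theo:ext}(i); yours is more direct and avoids the full filtration bookkeeping, while the paper's yields the finer bounds of the remark \ref{rema:extord} in the same stroke. One small imprecision: your dévissage on $\IndQp[P^-](M\otimes\chi)$ only gives $\dim\HOrdQp\leq\card\Delta_L$ a priori (the connecting maps to degree~$2$ are uncontrolled), but this inequality is exactly what the length count needs, so the conclusion stands.

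For (iii) you and the paper do essentially the same thing: construct a non-split extension of $\IndQp\chi\cdot(\epsa)$ by $\IndQp[\Pa^-]\chi$ via rank~$1$ and the inequality \eqref{extaSpa}, then push it forward along $q$. The paper shows the pushforward is non-split by proving $\Ext^1_{G(\Qp)}(\IndQp\chi',\ker q)=0$ directly from \eqref{SEextSp}; you argue instead that $\HOrdQp(q)$ is an isomorphism and transport non-triviality through the second morphism of \eqref{SEextP}. These are equivalent, and your observation that $\OrdQp(\IndQp[\Pa^-]\chi)=0$ makes the second morphism injective is the key point. Your verification in (iv) that the counit $\IndQp(\OrdQp V)\to V$ agrees with the projection (via the triangular identities) is a detail the paper leaves implicit.
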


\begin{rema} \label{rema:extord}
Sans l'hypothèse de généricité dans le point (iv), on montre que le foncteur $\IndQp$ et la projection $\IndQp \chi \twoheadrightarrow \IndQp[P^-] (\Stc \otimes \chi)$ induisent une injection $E$-linéaire dont le conoyau est de dimension comprise entre
\begin{gather*}
\card \left\{ \alpha \in \Delta - \Delta_L \mid s_\alpha(\chi) \cdot (\epsa) = \chi ~ \text{et} ~ \chi \circ \alpha^\vee \neq \varepsilon^{-1} \right\} \\
\text{et} ~ \card \left\{ \alpha \in \Delta - \Delta_L \mid s_\alpha(\chi) \cdot (\epsa) = \chi \right\}.
\end{gather*}
On s'attend à ce que la minoration soit une égalité (on peut déduire cela du point (i) du théorème \ref{theo:autoextmodp} lorsque le centre de $G$ est connexe et $p \neq 2$, auquel cas le minorant est nul d'après le lemme \ref{lemm:irreg}).
De même pour son analogue modulo $p$, sauf dans certains cas exceptionnels et lorsque $p=2$.
\end{rema}

\begin{proof}
Soient $\sigma$ une représentation spéciale de $L(\Qp)$ sur $A$ et $\chi : L(\Qp) \to A^\times$, $\chi' : T(\Qp) \to A^\times$ des caractères lisses. En utilisant le corollaire \ref{coro:ord} et le point (i) du corollaire \ref{coro:h1ord}, on déduit de la suite exacte \eqref{SEextP} pour le triplet $(G,B,T)$ avec $U=\chi'$ et $V = \IndQp[P^-] (\sigma \otimes \chi)$ que :
\begin{itemize}
\item si $\sigma \neq \St$ et $\sigma \neq \Sp_\alpha$ pour tout $\alpha \in \Delta_L$, alors
\begin{equation} \label{SEextSp}
\Ext^1_{G(\Qp)} \left( \IndQp \chi',\IndQp[P^-] (\sigma \otimes \chi) \right) = 0~;
\end{equation}
\item si $\sigma=\St$, alors on a une suite exacte de $A$-modules
\begin{multline} \label{SEextSt}
0 \to \Ext^1_{T(\Qp)} \left( \chi',\chi \right) \to \Ext^1_{G(\Qp)} \left( \IndQp \chi',\IndQp[P^-] (\St \otimes \chi) \right) \\
\to \bigoplus_{\alpha \in \Delta-\Delta_L} \Hom_{T(\Qp)} \left( \chi',s_\alpha(\chi) \cdot (\oma) \right) ;
\end{multline}
\item si $\sigma=\Sp_\alpha$ avec $\alpha \in \Delta_L$, alors on a une injection $A$-linéaire
\begin{multline} \label{SEextSpa}
\Ext^1_{G(\Qp)} \left( \IndQp \chi',\IndQp[P^-] (\Sp_\alpha \otimes \chi) \right) \\
\hookrightarrow \Hom_{T(\Qp)} \left( \chi',\chi \cdot (\oma) \right).
\end{multline}
\end{itemize}

On suppose $A=\ke$ et on prouve la version modulo $p$ de la proposition. En utilisant \cite[Proposition 5.1.4 (i)]{JH}, on obtient les cas d'annulation du point (i) modulo $p$. Les cas de non annulation résultent des autres points modulo $p$.
On pose
\begin{gather*}
\Delta' \dfn \left\{ \alpha \in \Delta \mid \chi' = s_\alpha(\chi) \cdot (\oma) \right\}, \\
\Delta'' \dfn \left\{ \alpha \in \Delta' \mid \chi \circ \alpha^\vee = \omega^{-1} \right\}.
\end{gather*}
Pour tout $\alpha \in \Delta_L$, on a une suite exacte de $\ke$-espaces vectoriels
\begin{multline} \label{SEextLSpa}
\Ext_{G(\Qp)}^1 \left( \IndQp \chi',\IndQp[P^-] (I_\alpha \otimes \chi) \right) \\
\to \Ext_{G(\Qp)}^1 \left( \IndQp \chi',\IndQp[\Pa^-] \chi \right) \\
\to \Ext_{G(\Qp)}^1 \left( \IndQp \chi',\IndQp[P^-] (\Sp_\alpha \otimes \chi) \right)
\end{multline}
avec $I_\alpha$ le noyau de la projection $\IndQpL[Q_\alpha^-] \un \twoheadrightarrow \Sp_\alpha$.
D'une part les constituants irréductibles de $I_\alpha$ sont les représentations $(\Sp_Q)_{Q_\alpha \subsetneqq Q \subset L}$ avec $Q$ parmi les sous-groupes paraboliques de $L$ (voir la remarque \ref{rema:spe}), donc on déduit de l'égalité \eqref{SEextSp} avec $\sigma=\Sp_Q$ pour tout sous-groupe parabolique $Q \subset L$ tel que $Q_\alpha \subsetneqq Q$ que le premier terme de la suite exacte \eqref{SEextLSpa} est nul.
D'autre part le foncteur exact $\IndQp[\Pa^-]$ admet un quasi-inverse à gauche d'après le lemme \ref{lemm:inj} pour le triplet $(G,\Pa,\Ga)$, donc il induit une injection $\ke$-linéaire
\begin{equation*}
\Ext_{\Ga(\Qp)}^1 \left( \Inda \chi',\chi \right) \hookrightarrow \Ext_{G(\Qp)}^1 \left( \IndQp \chi',\IndQp[\Pa^-] \chi \right)
\end{equation*}
et en utilisant l'inégalité \eqref{extaSpa} on en déduit que le terme du milieu de la suite exacte \eqref{SEextLSpa} est non nul lorsque $\alpha \in \Delta'$.
On en conclut que le dernier terme de la suite exacte \eqref{SEextLSpa} est non nul lorsque $\alpha \in \Delta'$ et en utilisant l'injection \eqref{SEextSpa}, on obtient le point (iii) modulo $p$.
En utilisant la suite exacte \eqref{SEextSt}, on voit que le conoyau du premier morphisme non trivial est de dimension au plus $\card (\Delta - \Delta_L) \cap \Delta'$.
La représentation $\IndQp \chi \cong \IndQp[P^-] ((\IndQpL[B_L^-] \un) \otimes \chi)$ admet une filtration dont les quotients successifs sont exactement les représentations $(\IndQp[P^-] (\Sp_Q \otimes \chi))_{Q \subset L}$ avec $Q$ parmi les sous-groupes paraboliques standards de $L$ (voir la remarque \ref{rema:spe}). En utilisant le point (i) et la minoration du point (ii) de l'analogue modulo $p$ du théorème \ref{theo:ext}, on voit que
\begin{multline*}
\dim_{\ke} \Ext^1_{T(\Qp)} \left( \chi',\chi \right) + \card \left( \Delta'-\Delta'' \right) \\
\leq \sum_{Q \subset L} \dim_{\ke} \Ext^1_{G(\Qp)} \left( \IndQp \chi',\IndQp[P^-] (\Sp_Q \otimes \chi) \right)
\end{multline*}
avec $Q$ parmi les sous-groupes paraboliques standards de $L$. En utilisant les cas d'annulation du point (i) modulo $p$ et le point (iii) modulo $p$, on en déduit que
\begin{multline*}
\dim_{\ke} \Ext^1_{T(\Qp)} \left( \chi',\chi \right) + \card \left( \left( \Delta - \Delta_L \right) \cap \left( \Delta' - \Delta'' \right) \right) \\
\leq \dim_{\ke} \Ext^1_{G(\Qp)} \left( \IndQp \chi',\IndQp[P^-] (\St \otimes \chi) \right),
\end{multline*}
donc le conoyau du premier morphisme non trivial de la suite exacte \eqref{SEextSt} est de dimension au moins $\card ((\Delta-\Delta_L) \cap (\Delta'-\Delta''))$.
Si $\chi' \neq \chi$, alors $\Delta''=\emptyset$ d'après le lemme \ref{lemm:irreg} et le premier terme non trivial de la suite exacte \eqref{SEextSt} est nul d'après \cite[Proposition 5.1.4 (i)]{JH}, donc on obtient le point (ii) modulo$p$.
Si $\chi'=\chi$, alors on obtient les bornes de la remarque \ref{rema:extord} modulo $p$. En particulier, on en déduit le point (iv) modulo $p$.

On prouve maintenant la proposition. Soient $\sigma$ une représentation spéciale de $L(\Qp)$ sur $E$ et $\chi : L(\Qp) \to \Oe^\times \subset E^\times$, $\chi' : T(\Qp) \to \Oe^\times \subset E^\times$ des caractères continus unitaires. Pour $k \geq 1$ entier, l'égalité \eqref{SEextSp}, la suite exacte \eqref{SEextSt} et l'injection \eqref{SEextSpa} avec $A=\A{k}$ et les réductions modulo $\pe^k$ de la représentation spéciale $\sigma$ et des caractères $\chi$ et $\chi'$ forment des systèmes projectifs. On passe à la limite projective puis on tensorise par $E$ sur $\Oe$ en utilisant le lemme \ref{lemm:spe}, \cite[Lemme 4.1.3]{Em1} et \cite[Proposition B.2]{JH}. On utilise \cite[Proposition 5.1.6]{JH} au lieu de \cite[Proposition 5.1.4 (i)]{JH}. Le reste de la démonstration est identique à la version modulo $p$.
\end{proof}

\begin{prop} \label{prop:extordF}
On suppose $F \neq \Qp$. Soient $\sigma$ une représentation spéciale de $L(F)$ sur $E$ et $\chi : L(F) \to \Oe^\times \subset E^\times$ un caractère continu unitaire.
\begin{enumerate}[(i)]
\item Si $\chi' : T(F) \to \Oe^\times \subset E^\times$ est un autre caractère continu unitaire, alors
\begin{equation*}
\Ext^1_{G(F)} \left( \Ind \chi',\Ind[P^-] (\sigma \otimes \chi) \right) \neq 0
\end{equation*}
si et seulement si $\sigma = \Stc$ et $\chi'=\chi$.
\item Le foncteur $\Ind$ et la projection $\Ind \chi \twoheadrightarrow \Ind[P^-] (\Stc \otimes \chi)$ induisent un isomorphisme $E$-linéaire
\begin{equation*}
\Ext^1_{T(F)} \left( \chi,\chi \right) \iso \Ext^1_{G(F)} \left( \Ind \chi,\Ind[P^-] (\Stc \otimes \chi) \right).
\end{equation*}
\end{enumerate}
\end{prop}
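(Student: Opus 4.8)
The plan is to mirror the proof of Proposition \ref{prop:extord}, the argument being considerably lighter here because for $F \neq \Qp$ the degree-one derived ordinary parts vanish: by Corollary \ref{coro:h1ord}(ii) one has $\HOrd[1](\Ind[P^-](\sigma \otimes \chi)) = 0$, so there is no obstruction term to balance and, in particular, no need to produce non-split extensions over the rank-one subgroups $\Ga$. As in Proposition \ref{prop:extord}, I would first establish the analogue modulo $p$ and then deduce the $p$-adic statement by reduction modulo $\pe^k$ and dévissage.

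For the statement modulo $p$, I work over $\ke$ (the reduction modulo $\pe$ of $\Stc$ being $\St$ by Lemma \ref{lemm:spe} with $k = 1$). Given a special representation $\sigma$ of $L(F)$ over $\ke$ and smooth characters $\chi : L(F) \to \ke^\times$ and $\chi' : T(F) \to \ke^\times$, I would feed $U = \chi'$ and $V = \Ind[P^-](\sigma \otimes \chi)$ into the exact sequence \eqref{SEextP} for the triple $(G, B, T)$. By Corollary \ref{coro:ord} one has $\Ord V \cong \chi$ when $\sigma = \St$ and $\Ord V = 0$ otherwise, and $\HOrd[1] V = 0$ by Corollary \ref{coro:h1ord}(ii); hence \eqref{SEextP} collapses to an isomorphism
\begin{equation*}
\Ext^1_{T(F)}(\chi', \Ord V) \iso \Ext^1_{G(F)}(\Ind \chi', \Ind[P^-](\sigma \otimes \chi))
\end{equation*}
induced by the functor $\Ind$ and the natural map $\Ind(\Ord V) \to V$. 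When $\sigma \neq \St$ the source is $0$; when $\sigma = \St$ the natural map $\Ind(\Ord V) \to V$ is, under the identification $\Ord V \cong \chi$, the projection $\Ind \chi \twoheadrightarrow \Ind[P^-](\St \otimes \chi)$. Since $\Ext^1_{T(F)}(\chi', \chi)$ vanishes for $\chi' \neq \chi$ by \cite[Proposition 5.1.4 (i)]{JH} and is non-zero for $\chi' = \chi$, both points of the proposition follow modulo $p$.

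For the $p$-adic statement, I would run the previous argument with coefficients in $\A{k}$ for each integer $k \geq 1$, using Lemma \ref{lemm:spe} to identify $\Spcz_Q / \pe^k \Spcz_Q$ with the special representation relative to $Q$ over $\A{k}$, so that the reductions modulo $\pe^k$ of $\sigma$, $\chi$ and $\chi'$ give rise to compatible short exact sequences and isomorphisms. One then passes to the projective limit in $k$ and tensors by $E$ over $\Oe$, invoking \cite[Lemme 4.1.3]{Em1} and \cite[Proposition B.2]{JH} to control the limits and the finiteness of the dimensions and replacing \cite[Proposition 5.1.4 (i)]{JH} by \cite[Proposition 5.1.6]{JH}. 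The only delicate point is this dévissage: one must check that the exact sequences are strictly compatible with reduction modulo $\pe^k$, that the relevant $\varprojlim^1$ term vanishes, and that the transition map remains identified with the projection $\Ind \chi \twoheadrightarrow \Ind[P^-](\Stc \otimes \chi)$ in the limit — all handled exactly as in \cite{JH} and in the proof of Proposition \ref{prop:extord}.
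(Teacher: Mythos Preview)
Your proposal is correct and follows essentially the same route as the paper: apply the exact sequence \eqref{SEextP} for $(G,B,T)$ with $U=\chi'$ and $V=\Ind[P^-](\sigma\otimes\chi)$, use Corollaries \ref{coro:ord} and \ref{coro:h1ord}(ii) to collapse it to an isomorphism over each $\A{k}$, invoke \cite[Proposition 5.1.4 (i)]{JH} modulo $p$, and then pass to the limit via Lemma \ref{lemm:spe}, \cite[Lemme 4.1.3]{Em1}, \cite[Proposition B.2]{JH} and \cite[Proposition 5.1.6]{JH}. Your explicit identification of the counit $\Ind(\Ord V)\to V$ with the projection $\Ind\chi\twoheadrightarrow\Ind[P^-](\St\otimes\chi)$ is a welcome clarification that the paper leaves implicit.
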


\begin{proof}
Soient $\sigma$ une représentation spéciale de $L(F)$ sur $A$ et $\chi : L(F) \to A^\times$, $\chi' : T(F) \to A^\times$ des caractères lisses. En utilisant le corollaire \ref{coro:ord} et le point (ii) du corollaire \ref{coro:h1ord}, on déduit de la suite exacte \eqref{SEextP} pour le triplet $(G,B,T)$ avec $U=\chi'$ et $V = \Ind[P^-] (\sigma \otimes \chi)$ que :
\begin{itemize}
\item si $\sigma \neq \St$, alors
\begin{equation*}
\Ext^1_{G(F)} \left( \Ind \chi',\Ind[P^-] (\sigma \otimes \chi) \right) = 0~;
\end{equation*}
\item si $\sigma=\St$, alors on a un isomorphisme $A$-linéaire
\begin{equation*}
\Ext^1_{T(F)} \left( \chi',\chi \right) \iso \Ext^1_{G(F)} \left( \Ind \chi',\Ind[P^-] (\St \otimes \chi) \right).
\end{equation*}
\end{itemize}
Avec $A=\ke$ et en utilisant \cite[Proposition 5.1.4 (i)]{JH} on prouve la version modulo $p$ de la proposition. Par un passage à la limite projective et un produit tensoriel comme dans la preuve de la proposition \ref{prop:extord} et en utilisant \cite[Proposition 5.1.6]{JH} on prouve la proposition.
\end{proof}

\bibliographystyle{alpha-fr}
\bibliography{complements}

\end{document}